\documentclass[a4paper, 10pt]{amsart}

\usepackage[margin=0.75in,headheight=15pt,footskip=25pt]{geometry}

\usepackage{cite}

\usepackage{amsmath}
\usepackage{amssymb}
\usepackage{amsthm}
\usepackage{subcaption}

\usepackage{tikz}
\usepackage{pgfplots}
\usetikzlibrary{calc,patterns,angles,quotes}
\usepackage{enumerate}

\usepackage{hyperref}
\newcommand{\doi}[1]{\href{https://doi.org/#1}{#1}}

\hypersetup{colorlinks=true,allcolors=blue}
\usepackage[usenames,dvipsnames]{pstricks}
\usepackage{siunitx,booktabs}
\theoremstyle{plain}
\newtheoremstyle{spacedplain}
  {8pt}      
  {8pt}      
  {\itshape} 
  {}         
  {\bfseries}
  {.}        
  {0.5em}    
  {}

\newtheoremstyle{spacedremark}
  {8pt}
  {8pt}
  {}         
  {}
  {\bfseries}
  {.}
  {0.5em}
  {}

\theoremstyle{spacedplain}
\newtheorem{theorem}{Theorem}[section]
\newtheorem{lemma}{Lemma}[section]

\newtheorem{assumption}{Assumption}[section]
\newtheorem{remark}{Remark}[section]
\newtheorem{definition}{Definition}[section]

\usepackage{algorithm}
\usepackage{algorithmicx} 
\usepackage[skip=6pt]{caption} 

\makeatletter
\def\@floatboxreset{\reset@font\raggedright} 
\makeatother

\numberwithin{figure}{section}
\numberwithin{equation}{section}

\usepackage{graphicx}%
\usepackage{multirow}%

\usepackage{mathrsfs}%
\usepackage[title]{appendix}%
\usepackage{xcolor}%
\usepackage{textcomp}%
\usepackage{manyfoot}%
\usepackage{booktabs}%

\usepackage{algpseudocode}%
\usepackage{listings}%

\usepackage{amssymb}
\usepackage{subcaption}

\usepackage{tikz}
\usepackage{pgfplots}
\usetikzlibrary{calc,patterns,angles,quotes}
\usepackage{enumerate}

\usepackage{hyperref}

\hypersetup{colorlinks=true,allcolors=blue}
\usepackage[usenames,dvipsnames]{pstricks}
\usepackage{siunitx,booktabs}
\usepackage{url}

\numberwithin{figure}{section}
\numberwithin{equation}{section}

\newcommand{\tribar}{\vert\kern-0.25ex\vert\kern-0.25ex\vert}

\newcommand{\M}{\bold{M}}
\renewcommand{\S}{\bold{S}}
\newcommand{\Q}{\bold{T}}
\newcommand{\D}{\bold{D}}

\newcommand{\CC}{\mathcal{C}}
\newcommand{\calB}{\mathcal{B}}

\newcommand{\dwidehat}[1]{\widehat{\widehat{\,#1\,}}}
\newcommand{\II}{\mathcal{I}}

\newcommand{\ddiv}{\text{div}\,}
\newcommand{\Gh}{\mathcal{G}_h}

\newcommand{\Th}{\mathcal{T}_h}
\newcommand{\Sh}{\mathcal{S}_{h}}
\newcommand{\Vh}{\mathcal{V}_{h}}

\newcommand{\N}{N}
\newcommand{\MM}{M}

\newcommand{\Nh}{\mathcal{N}_h}
\newcommand{\Nhb}{\mathcal{N}_h^{\partial}}
\newcommand{\Ehh}{\mathcal{E}_h}

\newcommand{\NT}{N_0}
\newcommand{\Zh}{\mathcal{Z}_h}

\newcommand{\R}{\mathbb{R}}
\newcommand{\al}{\boldsymbol{\alpha}}

\newcommand{\be}{\boldsymbol{\beta}}

\newcommand{\bff}{\bold{f}}
\newcommand{\bfb}{\bold{b}}
\newcommand{\bfr}{\bold{r}}

\newcommand{\varthet}{\boldsymbol{\vartheta}}

\newcommand{\x}{\boldsymbol{x}}
 
\usepackage{stmaryrd}

\begin{document}

\title[AFC scheme for a time fractional convection--diffusion--reaction equation]{A stabilized scheme satisfying the discrete maximum principle for a time fractional convection--diffusion--reaction equation}

\author{Christos Pervolianakis}
\address{Institut für Mathematik, Friedrich-Schiller-Universität Jena, 07743, Jena, Germany}
\email{christos.pervolianakis@uni-jena.de}

\subjclass[2020]{65M60, 65M15}

\date{\today}

\begin{abstract}
We study a time fractional convection--diffusion--reaction equation in a bounded domain $\Omega\subset\mathbb{R}^2$. A stabilized numerical scheme satisfying a discrete maximum principle is constructed by combining the conforming linear finite element method with the algebraic flux correction method. The resulting semi-discrete scheme is nonlinear, and its well-posedness is established. Assuming nonsmooth initial data, we derive error estimates for the semi-discrete scheme using energy arguments. For the temporal discretization, we employ the L1 method, obtaining a fully discrete scheme for which we prove well-posedness and the discrete maximum principle. We also present numerical experiments that validate the order of convergence
as well as we test our schemes to solutions that possess layers.
\end{abstract}

\keywords{finite element method,  algebraic flux correction,  time fractional convection--diffusion--reaction equation,  error analysis, nonsmooth initial data }

\maketitle

\section{Introduction}

We shall consider the following time fractional convection--diffusion--reaction equation where we seek function $u=u(\x,t)$ for $(\x,t)\in{\Omega}\times [0,T],$ satisfying 
\begin{equation}\label{conv_diff}
\begin{cases}
\partial^\alpha_t u  - \mu\Delta u + \bfb \cdot \nabla u + \sigma \, u = G , & \text{in }{{\Omega}}\times [0,T],\\
u = 0, & \text{on }\partial {\Omega}\times[0,T],\\
u(\cdot,0)  = u_0, & \text {in }{{\Omega}},
\end{cases}
\end{equation}
with $\alpha\in (0,1),$  the diffusion coefficient $0<\mu \ll 1$, the non-negative reaction coefficient $\sigma\geq 0,$ the spatial dependent velocity field  $\bfb := \bfb(\x)\in [L^\infty(\Omega)]^2,\,\ddiv \bfb = 0.$ In addition, we assume that $G\in L^2(\Omega).$ 
Here $\partial^\alpha_t u$ denote the Caputo fractional derivative of order $\alpha\in (0,1)$ with respect to $t,$ and defined through convolution, see, e.g., \cite{kilbas2006},
\begin{align}\label{caputo_derivative}
\partial^\alpha_t u(t) := \frac{1}{\Gamma(1-\alpha)} \int_0^t (t-s)^{-\alpha}\frac{d}{ds}u(s)\,ds,
\end{align}
where $\Gamma(x) = \int_0^\infty s^{x-1}e^{s}\,ds,$ is the Gamma function.

The time--fractional evolution equations with nonsmooth initial data arise in various applications in engineering, physics, biology, finance, and thus we need to develop and  analyze efficient and accurate numerical methods for the approximation of \eqref{conv_diff}. Such applications include the electron transport in Xerox photocopier  \cite{scher1975}, thermal diffusion in fractal domains \cite{nigmatullin1986} and many more. We refer to the comprehensive surveys \cite{metzler2000,metzler2004,metzler2014} for a more complete examples with practical applications. 

A key feature for the problem \eqref{conv_diff} is that its solution has an initial layer at $t=0$ and thus $u^\prime,$ where the operator $\prime$ stands for the time derivative, may blow up when $t \to 0.$ During the past years, there a tremendous number of works proposing numerical methods addressing that feature. To name a few, we refer to  \cite{jin2016,brunner2010,yang2022,stynes2017,zhang2014,yang2024b,wang2024,jin2023}, and the references therein. Another key property of the \eqref{conv_diff} is the maximum principle satisfaction, which can be found in \cite{luchko2009} assuming a nonnegative  reaction coefficient in \eqref{conv_diff} and for a reaction coefficient of arbitrary sign in \cite{kopteva2022}.

It is known that convection--diffusion--reaction equation can possess layers, i.e., narrow regions where the solution develop steep gradients. In that regions, the standard finite element method (FEM), may produce non--physical oscillations and as a result may fail to capture the solution's sharp transition on that areas. Thus, a lot of linear and nonlinear stabilization methods has been developed. The linear methods include the streamline--upwind Petrov--Galerkin (SUPG) method \cite{brooks1982}, the so-called SOLD methods \cite{john2007}. Further, a nonlinear stabilization methods was developed, the so--called Algebraic Flux Correction schemes (AFC), which have gained a systematical attention in the last two decades, see, e.g., \cite{kuzmin2005,john2021,kuzmin2010book,barrenechea2025, barrenechea2016, barrenechea2018, barrenechea2017b, chatzipantelidis2022, barrenechea2024, jha2021, pervolianakis2024} and the references therein. The core idea of the AFC method is enforce the discrete maximum principle (DMP), and thus these spurious oscillations will be eliminated.

In the context of the time--fractional evolution equations, the DMP preservation has also drawn a lot of attention. Particularly, we refer the work \cite{jin2017}, where the authors showed that the lumped mass methods for the time--fractional heat equation preserves the non--negativity when the corresponding initial function is non--negative. In \cite{brunner2015}, the authors propose a numerical scheme based on the finite differences that satisfies the DMP. Further, in the authors in \cite{yang2024} propose an nonlinear finite volume scheme that preserves the DMP on distorted meshes. Recently, in \cite{ahmed2026}, the authors proposed a stabilized finite element scheme for solving \eqref{conv_diff} using symmetric stabilization techniques in space. They derived error estimates for smooth and nonsmooth initial data. 

\subsection{Preliminaries}

Throughout the paper, we use the standard notation for Lebesgue  and Sobolev spaces, namely we denote $W^{m}_p=W^{m}_p(\Omega)$, $H^{m}=W^{m}_{2}$, $L^{p}=L^{p}(\Omega)$, and  with $\|\cdot\|_{m,p}=\|\cdot\|_{W^{m}_p}$, $\|\cdot\|_{m}=\|\cdot\|_{H^{m}}$,  $\|\cdot\|_{L^{p}}=\|\cdot\|_{L^{p}(\Omega)}$, for $m\in\mathbb{N}$ and $p\in[1,\infty]$, the corresponding norms. In addition, we introduce the fractional order Sobolev space $\dot H^s(\Omega)\subset L^2(\Omega),\,s\geq 0,$ by
\begin{align*}
\dot H^s := \dot H^s(\Omega) = \{v\in L^2\,:\,\sum_{j=1}^\infty \lambda_j^s(v,\phi_j)^2 < \infty \},
\end{align*}
with norm
\begin{align*}
\|v\|_{\dot H^s}^2 := \sum_{j=1}^\infty (v,\phi_j)^2 < \infty,
\end{align*}
where the non--decreasing sequence $\{\lambda_j\}_{j=1}^\infty$ denotes the positive eigenvalues of $-\Delta$ with zero Dirichlet boundary conditions. In addition, the sequence $\{\phi_j\}_{j=1}^\infty$ denotes the corresponding eigenfunctions which form an orthogonal basis of $L^2.$ The non--negative powers of the operator $-\Delta$ are define through
\begin{align*}
(-\Delta)^sv = \sum_{j=1}^\infty \lambda_j^s(v,\phi_j)\phi_j,\;\;\text{for}\;\;s\geq 0,
\end{align*}
then we can equivalently express the norm $\|\cdot\|_{\dot H^s}$ as
\begin{align*}
\|v\|_{\dot H^s}^2 = \| (-\Delta)^{s/2}v\|_{L^2} = ((-\Delta)^sv,v)^{1/2}.
\end{align*}
We note that $\dot H^0 = L^2.$ For more details, we refer to \cite[Chapter 3]{thomee2006}, \cite[Section 1.3]{jin2023}. 

The basis for the methods studied is the variational formulation of the model problem, to find function $u(t)\in H^1_0,$ such that
\begin{align}
(\partial^\alpha_t u, v) + \calB(u, v) = (G,v), \;\;\;\forall\, v\in H^{1}_0,\label{weak_u}
\end{align}
where the bilinear form $\calB(\cdot,\cdot)\,:\,H^{1}_0\times H^{1}_0 \to \mathbb{R}$ is defined as $\calB(v,w) := (\mu\nabla v, \nabla w) + (\bfb\cdot\nabla v,w) + (\sigma v, w)$ for all $v,\,w\in H^{1}_0(\Omega).$ The bilinear for is bounded and coercive in $H^{1}_0(\Omega),$ due to $\ddiv\bfb=0.$ We define the energy norm, $\tribar v \tribar^2 := \mu\|\nabla v\|_{L^{2}}^2 + \sigma\|v\|_{L^{2}}^2,\,v\in H^{1}.$ We note that, $\|\nabla v\|_{L^{2}} \leq \mu^{-1/2}\tribar v \tribar,\,v\in H^{1}.$

Furthermore, we recall the regularity properties of \eqref{conv_diff}. According to \cite[Theorems 11, 12, 13]{mustapha2020}, \cite[Theorem 3]{mustapha2021}, the following regularity results holds when $G=0.$ For $0\le \delta \leq 2,$ the solution $u$ and its time derivative, satisfies for $t>0,$
\begin{align}\label{regularity}
\|u(t)\|_{\dot H^s} + t\|u^\prime(t)\|_{\dot H^s} \leq C t^{-\alpha(s-\delta)/2}\|u_0\|_{\dot H^\delta},\;\;\;0\leq s-\delta \leq 2.
\end{align}
We also assume that $G$ is regular enough so that \eqref{regularity} remains valid.

\subsection{Finite element method}

The finite element methods studied are based on triangulations $\Th =\bigcup_{i=1}^{\N_E}K_i$ with $\N_E\geq1$ of $\Omega,$ where $h = \max_{1\leq i\leq \N_E}\mathrm{diam}(K_i).$ 
 We use the finite element spaces
\begin{equation}\label{fem_space}
\Vh : = \left\lbrace \chi \in\mathcal{C}(\overline{{\Omega}})\,:\,\chi|_{K} \in \mathbb{P}_1,\;\;\forall\,K\in\Th\right\rbrace,\;\;\;\;\Sh := \Vh\cap H^1_0(\Omega),
\end{equation} 
where $\Nh = \lbrace Z_j\rbrace_{j=1}^{\MM}$ be the set of the nodes in the triangulation $\Th$ and $\N<\MM,$ are the interior nodes. Further, $\lbrace \phi_j \rbrace_{j=1}^{\MM}$ the basis functions of $\Vh$, with $\phi_j(Z_i)=\delta_{ij},$ thus, the corresponding nodal basis $\lbrace \phi_j \rbrace_{j=1}^{\N}\subset \Sh.$

The semi--discrete approximation of the variational problem \eqref{weak_u}, may be written as follows: Find function $u_h\in \Sh$, with $u_h(0) = u^0_h\in \Sh$, such that
\begin{align}
(\partial^\alpha_t u_h, \chi) + \calB(u_h, \chi)  = (G,\chi), \;\;\;\forall\, \chi\in \Sh.\label{semi_weak}
\end{align}
Let $\be = \be(t)$ with $\be=(\beta_1,\dots,\beta_{\N})^T$ be the coefficient vector, with respect to the basis of $\Sh$ of $u_h\in\Sh.$ Then, the resulting stabilized semi--discrete scheme can be written in the following form,
\begin{equation}\label{Semi_discrete_matrix_initial}
\begin{aligned}
\M\partial_t^\alpha\be(t) +  (\mu\S + \Q + \D + \sigma\,\M)\be(t) =  \bfr(G(t)),
\end{aligned}
\end{equation}
where the matrix $\M=(m_{ij})$ with elements  $m_{ij}=(\phi_i,\phi_j),\,i,j=1,\ldots,\N,$ and $\S=(s_{ij})$ with elements $s_{ij}=(\nabla\phi_i,\nabla\phi_j),\,i,j=1,\ldots,\N,$ are the usual mass and stiffness matrix, respectively. The matrix due to convection term is defined $\Q_{\al}=(\tau_{ij}),\,i,j=1,\ldots,\N,$ with elements $\tau_{ij} = (\bfb \cdot \nabla \phi_j,\phi_i),\,i,j=1,\ldots,\N.$

\subsection{Contributions and outline of the paper}

Our aim is to construct a semi-discrete scheme for a time-fractional convection--diffusion--reaction equation that satisfies a discrete maximum principle. To this end, we employ the conforming linear finite element method for the spatial discretization and enforce the discrete maximum principle through an algebraic flux correction technique. 

Due to the nonlinear nature of the algebraic flux correction, the resulting semi-discrete scheme is nonlinear. We therefore establish its existence and uniqueness by means of a fixed-point argument for sufficiently small mesh size $h$. Further, we prove that the proposed stabilized semi--discrete scheme satisfies the discrete maximum principle.

Since, in many applications, the initial data may possess only $L^2$ regularity rather than the stronger $H^2$ regularity, in order to derive error estimates, we adopt the novel energy argument approach developed in \cite{goswami2011} for linear evolution equations and in \cite{mustapha2018,mahata2022,ahmed2026} for time fractional diffusion equations. The main difference between our error analysis and those in the latter works is the presence of a nonlinear stabilization term. Consequently, the main novelty of our analysis lies in the treatment of this term throughout the error estimates, which enables us to derive optimal convergence rates with respect to $h$.
 In particular, under the regularity as in \eqref{regularity}, we derived an optimal error estimate for the AFC semi--discrete scheme for $0<\alpha< \frac{1}{2-\delta},$ of the form
\begin{align*}
\|u(t) - u_h(t)\|_{L^2} \leq C_{\alpha,\mu}h^2t^{-\alpha(2-\delta)/2}\|u_0\|_{\dot H^\delta},\;\;\;0< t \leq T.
\end{align*} 

For the fully--discrete scheme we consider the L1 discretization on a uniform temporal mesh of the Caputo temporal derivative. Again, the resulting scheme is nonlinear and we employ a fixed point argument to show its existence and uniqueness. Further, we prove the local and global discrete maximum principle satisfaction by the stabilized full--discrete scheme. 

The paper is organized as follows: In Section \ref{section:preliminaries}, we introduce some useful results from the fractional calculus. In Section \ref{section:AFC}, we employ the AFC method to the \eqref{semi_weak} and we introduce some assumptions on the correction factors for the AFC, namely we assume that the correction factors are linearity preserving and and satisfy a local Lipschitz-type estimate. Under these assumptions, we establish the well--posedness of AFC semi--discrete scheme. Further, we prove the DMP preservation. In Section \ref{section:error_analysis}, under the regularity assumptions \eqref{regularity} to the solution and the assumptions on the correction factors, we derived error estimates. In Section \ref{section:fully}, we consider the fully--discrete scheme based on L1 discretization on a uniform temporal mesh of the Caputo temporal derivative. We establish its well--posedness and we DMP preservation. Finally, in Section \ref{section:numerical_results}, we present numerical experiments performed several numerical experiments for problems involving solutions with layers.

\section{Preliminaries}\label{section:preliminaries}

In the current section, we recall some properties of the fractional integral operators. For a more detailed introduction, we refer to the book of \cite{jin2023}. For our analysis purposes, we recall the Riemann--Liouville fractional integral operator $\II^\alpha$ of order $\alpha\in (0,\infty)$ which defined as
\begin{align}\label{riemann_liouville}
\II^\alpha \varphi(t) := \int_0^t \omega_{\alpha}(t-\tau)\varphi(\tau)\,d\tau,\;\;\;\omega_{\alpha}(t):= \frac{t^{\alpha-1}}{\Gamma(\alpha)}.
\end{align}
For the the Riemann--Liouville fractional integral operator $\II^\alpha$ and the caputo fractional derivative of order $\alpha\in (0,1)$ we have that $\partial_t^\alpha \varphi(t) := \II^{1-\alpha}\varphi'(t).$ For the integral operators $\II^\alpha$ and $\II^\beta,$ for some $\alpha,\,\beta\in (0,\infty),$ we obtain the following semigroup property
\begin{align*}
\II^{\alpha}\II^{\beta} = \II^{\alpha+\beta},\;\;\;\alpha,\,\beta\in (0,\infty).
\end{align*}
In addition, some properties for the fractional derivative operator $\II^\alpha,$ that will be used in our analysis are the following. It is known, see, e.g., \cite[Lemma 3.1 (ii)]{mustapha2014} that for piecewise time continuous functions $\varphi\,:\,[0,T] \to L^2,$ it holds
\begin{align}\label{coercivity}
\int_0^T (\II^\alpha \varphi, \varphi)\,dt \geq \cos(\alpha \pi/2)\int_0^T \|\II^{\alpha/2}\varphi\|^2_{L^2}\,dt\geq 0,\;\;0<\alpha<1.
\end{align}
Furthermore, by \cite[Lemma 3.1 (iii)]{mustapha2014} and the inequality $\cos(\alpha\pi/2)\geq 1-\alpha,$ the following continuity property holds for $\II^\alpha$. For any $\varphi,\,\psi\in L^2((0,T);L^2),$ we have
\begin{align}\label{derivative_cont}
\int_0^t (\II^{1-\alpha}\varphi, \psi)\,ds \leq \epsilon \int_0^t (\II^{1-\alpha}\varphi,\varphi)\,ds + \frac{1}{4\epsilon(1-\alpha)^2}\int_0^t (\II^{1-\alpha}\psi,\psi)\,ds,\;\epsilon>0.
\end{align}
We will use $\varphi_i(t) = t^i\varphi(t),\,i=1,2.$ Then, see \cite[Lemma 2.2]{karaa2018}, \cite[Lemma 2.1]{mustapha2018} the following identities are holds for $0\leq t\leq T$ and $0<\alpha<1,$
\begin{align}
t\II^\alpha\varphi(t) & = \II^\alpha\varphi_1(t) + \alpha \II^{\alpha+1}\varphi(t),\label{aux_eq_1}\\
t\II^\alpha\varphi^\prime(t) & = \II^\alpha\varphi^\prime_1(t) + (\alpha-1) \II^{\alpha}\varphi(t) - t\omega_\alpha(t)\varphi(0),\label{aux_eq_2}\\
t^2\II^\alpha\varphi^\prime(t) & = \II^\alpha\varphi_2^\prime(t) + 2(\alpha-1) \II^{\alpha}\varphi_1(t) + \alpha(\alpha-1) \II^{\alpha}\varphi(t) - t^2\omega_\alpha(t)\varphi(0)\label{aux_eq_3}.
\end{align}
A key role on our energy estimates, the following inequality. 
\begin{lemma}\normalfont{\cite[Lemma 1]{alikhanov2012}}\label{lemma:derivative_caputo_inequality}
Let $v(t)$ an absolutely continuous function on $[0,T],$ then it holds that
\begin{align*}
v(t) \partial_t^\alpha v(t) \geq \frac{1}{2}\partial_t^\alpha v^2(t),\;\;\;0<\alpha<1.
\end{align*}
\end{lemma}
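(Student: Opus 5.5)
The plan is to reduce the claim to a statement about the sign of one integral. Multiply the identity by $\Gamma(1-\alpha)$ and use $\frac{d}{ds}v^2(s) = 2v(s)v'(s)$, which holds a.e. since $v$ is absolutely continuous. The inequality $v(t)\partial_t^\alpha v(t) - \frac12 \partial_t^\alpha v^2(t)\geq 0$ then becomes
\begin{align*}
\int_0^t (t-s)^{-\alpha}\bigl(v(t)-v(s)\bigr)v'(s)\,ds \geq 0 .
\end{align*}
Set $w(s) := v(t)-v(s)$ for $s\in[0,t]$. Then $w'(s) = -v'(s)$ and $w(t)=0$, so the integrand equals $-(t-s)^{-\alpha} w(s)w'(s) = -\frac12 (t-s)^{-\alpha}\frac{d}{ds}w^2(s)$.

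The next step is to integrate by parts on $[0,t-\varepsilon]$ and let $\varepsilon\to 0$. This gives
\begin{align*}
-\frac12\Bigl[(t-s)^{-\alpha}w^2(s)\Bigr]_{0}^{t-\varepsilon} + \frac{\alpha}{2}\int_0^{t-\varepsilon}(t-s)^{-\alpha-1}w^2(s)\,ds .
\end{align*}
The lower boundary term contributes $+\frac12 t^{-\alpha}w^2(0)\geq 0$. The remaining integral is nonnegative. So the left-hand side is a sum of nonnegative quantities, provided the upper boundary term $-\frac12\varepsilon^{-\alpha}w^2(t-\varepsilon)$ tends to zero.

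That limit is the main obstacle. Absolute continuity alone gives only $w(t-\varepsilon)=o(1)$, not $w^2(t-\varepsilon)=o(\varepsilon^{\alpha})$. I would handle it as follows. First prove the inequality for $v\in C^1[0,T]$. In that case $|w(t-\varepsilon)|\leq C\varepsilon$, so $\varepsilon^{-\alpha}w^2(t-\varepsilon)\leq C\varepsilon^{2-\alpha}\to 0$. The integral term then converges monotonically to a finite limit, because the left-hand side is finite.

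For a general absolutely continuous $v$, I would then approximate $v$ by smooth $v_n$ with $v_n'\to v'$ in $L^1$, and hence $v_n\to v$ uniformly. Passing the inequality to the limit needs convergence of $\partial_t^\alpha v_n(t)$ and $\partial_t^\alpha v_n^2(t)$. This holds for a.e.\ $t$ by Young's inequality for the convolution with the $L^1$ kernel $s^{-\alpha}$, after passing to a subsequence. So the inequality holds almost everywhere, or at every $t$ where $\partial_t^\alpha v$ exists, which is the sense in which the lemma is used in the energy estimates.

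Alternatively, one can sidestep the limit by noting that the identity produced by the integration by parts persists with the upper boundary term dropped, because that term is nonpositive and is subtracted. More precisely, for each $\varepsilon$ the truncated left-hand integral equals
\begin{align*}
\frac12 t^{-\alpha}w^2(0) + \frac{\alpha}{2}\int_0^{t-\varepsilon}(t-s)^{-\alpha-1}w^2(s)\,ds - \frac12\varepsilon^{-\alpha}w^2(t-\varepsilon).
\end{align*}
Nonnegativity then needs the liminf of the last term to vanish. I would try to obtain this from finiteness of the $\alpha$-weighted integral: since $\int_0^t (t-s)^{-\alpha-1}w^2(s)\,ds<\infty$, there is a sequence $\varepsilon_k\to 0$ with $\varepsilon_k^{-\alpha}w^2(t-\varepsilon_k)\to 0$.
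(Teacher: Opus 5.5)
The paper gives no proof of this lemma; it only cites Alikhanov. Alikhanov's argument is precisely your core computation. He writes the difference as $\frac{1}{\Gamma(1-\alpha)}\int_0^t (t-s)^{-\alpha}(v(t)-v(s))v'(s)\,ds$, sets $w(s)=v(t)-v(s)$, and integrates by parts. He then reads off $\frac12 t^{-\alpha}w^2(0)+\frac{\alpha}{2}\int_0^t (t-s)^{-\alpha-1}w^2(s)\,ds\ge 0$, with the boundary term at $s=t$ taken to vanish. Where you differ is that you treat that boundary term explicitly. Your $C^1$ case plus density argument is correct: Young's inequality for the kernel $s^{-\alpha}$, then a.e.\ convergence along a subsequence for both $\partial_t^\alpha v_n$ and $\partial_t^\alpha v_n^2$. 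It proves the inequality for a.e.\ $t$. That is all the paper needs, since each use of the lemma is followed by applying $\partial_t^{-\alpha}$, i.e.\ by an integration in time.

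Two claims in your write-up are not justified. First, the density argument does not give your parenthetical claim that the inequality holds at \emph{every} $t$ where $\partial_t^\alpha v(t)$ exists. Second, in the alternative paragraph you simply assert $\int_0^t (t-s)^{-\alpha-1}w^2(s)\,ds<\infty$. Once the truncated left-hand side converges, this finiteness is equivalent to convergence of the boundary term, so assuming it begs the question.

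Both gaps, and the density step itself, are removed by a one-line estimate. Suppose $\int_0^t (t-s)^{-\alpha}|v'(s)|\,ds<\infty$; this is what existence of $\partial_t^\alpha v(t)$ as a Lebesgue integral means, and it holds for a.e.\ $t$ by Tonelli. Since $(t-s)^{-\alpha}\ge\varepsilon^{-\alpha}$ on $(t-\varepsilon,t)$,
\begin{align*}
|w(t-\varepsilon)|\le\int_{t-\varepsilon}^t|v'(s)|\,ds\le\varepsilon^{\alpha}\int_{t-\varepsilon}^t(t-s)^{-\alpha}|v'(s)|\,ds=o(\varepsilon^{\alpha}),
\end{align*}
so $\varepsilon^{-\alpha}w^2(t-\varepsilon)=o(\varepsilon^{\alpha})\to0$. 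Your truncated identity then passes to the limit at every such $t$. Existence of $\partial_t^\alpha v^2(t)$ there follows from $|2vv'|\le 2\max_{[0,T]}|v|\,|v'|$. Finiteness of the weighted integral comes out as a consequence rather than an assumption. The lemma therefore holds pointwise wherever $\partial_t^\alpha v(t)$ exists as a Lebesgue integral. This is exactly the step the cited proof leaves implicit.
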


\subsection{Mesh assumptions}

We consider a  family of regular triangulations $\Th$ of the domain $\overline\Omega\subset\R^2$. We will assume that the family $\Th$ satisfies the following assumption.
 
\begin{assumption}\label{mesh-assumption}
Let $\Th =\bigcup_{i=1}^{\N_E}K_i,\,\N_E\geq1,$ be a  family of regular triangulations  of $\overline\Omega$ such that any edge of any $K$ is either a subset of the boundary $\partial\Omega$ or an edge of another $K \in \Th$, and in addition
\begin{enumerate}
\item  $\Th$ is shape regular, i.e, there exists a constant $\varpi >0,$ independent of $K$ and $\Th,$ such that 
\begin{equation}\label{shape_regularity}
\frac{h_K}{\varrho_K} \leq \varpi,\quad \forall K\in\Th,
\end{equation}
where $h_K$ the longest edge of $K,\,\varrho_K=\mathrm{diam}(B_K)$, and $B_K$ is the inscribed ball in $K$.
\item The  family of triangulations $\Th$ is quasi-uniform, i.e., there exists constant $\varrho>0$ such that
\begin{align}\label{quasi-uniformity}
\frac{\max_{K\in\Th}h_K}{\min_{K\in\Th}h_K} \leq \varrho,\quad\forall K\in\Th.
\end{align}
\item
We assume that $\Th$ satisfies an acute condition, i.e., all interior angles of a triangle $K\in\Th$ are less than or equal $\pi/2$. 
\end{enumerate}
\end{assumption}

\begin{remark}\label{remark:s_elements}
The third part of the Assumption \ref{mesh-assumption} implies that $s_{ij}\le0,\,j\neq i$ and $s_{ii}>0,$ see, e.g., {\normalfont\cite{draganescu2004}}. The importance of this assumption is highlighted in Remark \ref{remark:optimal2}.
\end{remark}

Since $\Th$ satisfies \eqref{quasi-uniformity}, we have for all $\chi\in \Sh,$ cf., e.g., \cite[Chapter 4]{brenner2008}, 
\begin{equation}\label{eq:inverse_estimate}
\|\chi\|_{L^{\infty}} + \|\nabla \chi\|_{L^{2}}  \le Ch^{-1}\|\chi\|_{L^{2}}
\text{ and } \|\nabla \chi\|_{L^{\infty}}  \le Ch^{-1}\|\nabla\chi\|_{L^{2}}.
\end{equation} 

Let $\Nh$ be the the indices of all the nodes of $\Th$, i.e., $\Nh:=\{ i: Z_i \text{ a node of the triangulation } \Th\}$ where $\Nh = \Nh^0\cup\Nhb,$ with $\Nh^0$ denotes the set of internal nodes and similarly $\Nhb,$ the set of boundary nodes. We denote  $\omega_i$, $i\in \Nh$, the collection of triangles with a common vertex $Z_i$,  i.e. $\omega_i = \cup_{Z_i\in K}\overline{K},$ see Fig. \ref{fig:patches}. The sets $\Nh(\omega)$ and $\Ehh(\omega)$ contain the vertices and  the edges, respectively, of a  subset of $\omega\subset\Th$ and $\Zh^i$ the set of nodes adjacent to $Z_i$, $\Zh^i:=\{ j: Z_j\in \Nh, \text{adjacent to }Z_i\}$.

\begin{figure}
\centering
\includegraphics[scale=0.7]{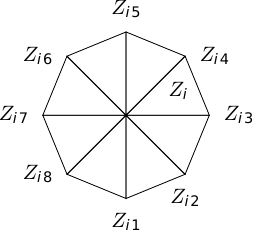} 
\caption{The subdomain $\omega_i$ of the triangulation $\Th.$ Here $Z_{i\ell},\,\ell=1,\ldots,8,$ are the neighboring nodes of $Z_i.$}\label{fig:patches}
\end{figure}

\section{Algebraic flux correction}\label{section:AFC}

\subsection{A semi--discrete scheme that satisfies the DMP}

In this section, we employ the AFC method into the semi--discrete scheme \eqref{weak_u}. As usual procedure, see, e.g., \cite{kuzmin2010book, barrenechea2025, barrenechea2024}, we replace the mass matrix $\M$ but the corresponding lumped mass matrix $\M_L,$ which is a diagonal matrix with elements $m_i = \sum_{j=1}^{\N}m_{ij}.$ In addition, the negative off--diagonal elements of $\Q$ are vanished by adding a symmetric and with zero row and column sum matrix $\D,$ so that the matrix $\Q+\D$ has non-negative off diagonal elements. We note that, since we are considering meshes that are weakly acute, the stiffness matrix has already that property, and thus, we exclude it from this.  More specifically, the matrix $\D = (d_{ij}),\,i,j=1,\ldots,\MM,$ is defined through,
\begin{equation}\label{D_def}
d_{ij} : = \max\{- \tau_{ij},0, - \tau_{ji}\}=d_{ji}\le 0,\quad\forall j\neq i,\,\,i,j=1,\ldots,\MM,\;\;\text{and}\;\;d_{ii} : = -\sum_{j\neq i}d_{ij}>0.
\end{equation}
Some of them are harmless to the DMP preservation, thus, the core idea of the AFC method is to return them proportionally.
Let $\be = \be(t)$ with $\be=(\beta_1,\dots,\beta_{\N})^T$ be the coefficient vector, with respect to the basis of $\Sh$ of $u_h\in\Sh.$ Then, the resulting stabilized semi--discrete scheme can be written in the following form,
\begin{equation}\label{Semi_discrete_matrix}
\begin{aligned}
\M_L\partial_t^\alpha\be(t) +  (\mu\S + \Q + \D + \sigma\,\M_L)\be(t) =  \bfr(G(t)) +  \overline{\mathsf{f}}(\be(t)),
\end{aligned}
\end{equation}
where $\overline{\mathsf{f}}$ denote the correction term due to the algebraic flux correction, defined as
\begin{equation}\label{correction_term}
\overline{\mathsf{f}}_{i}=\sum_{j\neq i}\mathfrak{a}_{ij}\mathsf{f}_{ij},\;\;\;\;i=1,\ldots,\N.
\end{equation}
where the correction factors $\mathfrak{a}_{ij} = \mathfrak{a}_{ji}\in[0,1],\;i,j=1,\ldots,\N,$ which will specified later in subsection \ref{subsection:factors}. Further, the fluxes $\mathsf{f}_{ij},\,i,j=1,\ldots,\N,$  are defined as follows. We assume $\bff=(f_1,\ldots,f_\N)^T$ denote the error due to the artificial diffusion operator $\D,$ defined in \eqref{D_def}, which is given by $\bff(\be) =  \D\be.$ Since the matrix $\D$ has zero sum, the latter can be decomposed into internodal fluxes, see, e.g., \cite{kuzmin2010book, barrenechea2025}, i.e.,
\begin{equation}\label{internodal_fluxes_f}
f_i = \sum_{j\neq i}\mathsf{f}_{ij},\quad i = 1,\dots,\N,\quad\text{with}\quad \mathsf{f}_{ji} =  d_{ij}\left(\beta_{j} - \beta_i\right) = -\mathsf{f}_{ij}.
\end{equation}
For the rest of this paper we will call the internodal fluxes as anti-diffusive fluxes. Every anti-diffusive flux $\mathsf{f}_{ij}$ is multiplied by a solution--depended correction factor $\mathfrak{a}_{ij}\in[0,1]$, before it is inserted into the equation. This procedure results \eqref{Semi_discrete_matrix}.

We recall that the bilinear form  $(\cdot,\cdot)_h,$ see, e.g., \cite[Chapter 15]{thomee2006}, which is an inner product in $\Sh$ that approximates $(\cdot,\cdot)$ and is defined by
\begin{equation}\label{quadrature}
(\psi,\chi)_h = \sum_{K\in\Th}Q_h^K(\psi\chi),\ \text{ with }Q_h^K(g) = \frac{1}{3}|K|\sum_{j=1}^3g(Z_j^K)\approx \int_K g\,dx,
\end{equation}
with $\{Z_j^K\}_{j=1}^3$ the vertices of a triangle $K\in\Th.$ For the inner product $(\cdot,\cdot)_h$ introduced in \eqref{quadrature}, the following holds. It is known that $(\cdot,\cdot)_h$ induces an equivalent norm to $\|\cdot\|$ on $\Sh$. In particular, exists constants $C_1,\,C_2$ independent on $h$, such that
\begin{equation}\label{mass_lump_equivalence}
C_1\|\chi\|_h  \leq \|\chi\|_{L^2}  \leq C_2\|\chi\|_{h},\ \text{ with }\ \|\chi\|_h = (\chi, \chi)_h^{1/2},\quad\forall\chi\in \Sh.
\end{equation}

To write the variational formulation of \eqref{Semi_discrete_matrix}, we recall the following bilinear form from \cite{barrenechea2016}. In particular, for a $\psi\in \Sh,$ its nodal values as are denoted by $\psi_i = \psi(Z_i),\;i=1,\ldots,\N.$ We define the bilinear form $\widehat d_{\D,h}(\psi;\cdot,\cdot)\,:\,{\CC}\times {\CC}\to{\R},$ with $\psi\in\CC$  as
\begin{align}
\widehat{d}_{\D,h}(\psi;\zeta,\chi) & := \sum_{i<j}d_{ij}(1 - \mathfrak{a}_{ij}(\psi))(\zeta_i - \zeta_j)(\chi_i - \chi_j),\label{stab_term_D_widehat}
\end{align} 
where the last equality in both bilinear forms, is due to the symmetry of matrix $\D,$ respectively, see, e.g., \cite{barrenechea2018}.

Then, the variational formulation of \eqref{Semi_discrete_matrix}, reads: We seek $u_h\in\Sh,$ such that 
\begin{equation}\label{semi_fem_u_2D_afc}
\begin{aligned}
(\partial^\alpha_t u_h, \chi)_h & + \calB_h(u_h,\chi) - \widehat{d}_{\D,h}(u_h;u_h,\chi) = (G,\chi),\;\;\forall\chi\in \Sh
\text{ with } u_h(0)  = u_h^0 := P_hu_0,
\end{aligned}
\end{equation}
where the bilinear form $\calB_h(\cdot,\cdot)\,:\,\Sh\times\Sh \to\mathbb{R},$ as $\calB_h(\psi,\chi):= (\mu\nabla \psi, \nabla \chi) + (\bfb\cdot \nabla \psi, \chi) + (\sigma\psi,\chi)_h$ for all $\psi,\,\chi\in\Sh.$ By defining the mesh--dependent energy norm, $\tribar v \tribar^2_h := \mu\|\nabla v\|_{L^{2}}^2 + \sigma\|v\|_h^2,\,v\in H^{1}.$ Note that the latter bilinear form is bounded and coercive on $\Sh$.
\begin{remark}\label{remark:h_norm}
Since the norm $\|\cdot\|_h$ is equivalent to $\|\cdot\|_{L^{2}}$ on $\Sh,$ see \eqref{mass_lump_equivalence}, it follows that $\tribar  \cdot \tribar_h$ is equivalent $\tribar \cdot \tribar$ on $\Sh.$ In particular, it holds 
\begin{align}\label{h_norm_equivalence}
\widetilde C_1\tribar \chi \tribar_h   \leq \tribar \chi \tribar \leq \widetilde C_2 \tribar \chi \tribar_h,\quad\forall\chi\in \Sh \text{ with } \widetilde C_1 = \min\{1, C_1\},\,\widetilde C_2 = \max\{1, C_2\}.
\end{align}
\end{remark}

\begin{remark}
We note that the stabilized scheme \eqref{semi_fem_u_2D_afc} is nonlinear due to the nonlinear character of the stabilization terms.
\end{remark}

\subsection{Correction factors}\label{subsection:factors}

To ensure that the AFC scheme \eqref{Semi_discrete_matrix} satisfies the discrete maximum principle,  we choose the correction factors  $\mathfrak{a}_{ij}$ such that the sum of anti-diffusive fluxes is constrained by, see, cf. e.g., \cite{kuzmin2010book, barrenechea2025},
\begin{equation}\label{led_2D_weakened}
q_i^-\left(\beta_{i}^{\min}(t)-\beta_{i}(t)\right)\leq \sum_{j\neq i}\mathfrak{a}_{ij}\mathsf{f}_{ij}\leq q_i^+\left(\beta_{i}^{\max}(t)-\beta_{i}(t)\right),\;\;q_i^\pm\geq 0.
\end{equation}
Here, we denote with $\beta_i^{\max},\,\beta_i^{\min},$ the local maximum and minimum of $\be$ over the patch $\omega_i,$ respectively. For a detailed presentation we refer to \cite[Chapter 4]{kuzmin2010book}, \cite[Section 6]{barrenechea2024}, \cite[Chapter 10]{barrenechea2025}. Examples of correction factors can be found also in  \cite{barrenechea2016,barrenechea2018,barrenechea2024,kuzmin2010book,kuzmin2002,kuzmin2004} and the references therein.
We shall assume that the correction factors $\mathfrak{a}_{ij},$ satisfies the following two properties.
\begin{assumption}(Linearity Preservation)\label{assumption:linearity_preservation}
Let $Z_i$ an inner node of $\Th.$ The limiter $\mathfrak{a}_{ij}$ is linearity preserving, i.e., for every edge $e\in\mathcal{E}_h^0$ with endpoints $Z_i,\,Z_j,$
\begin{align*}
\mathfrak{a}_{ij}(v) = 1\;\;\text{ if }\;\;v\in \mathbb{P}_1(\mathbb{R}^2).
\end{align*}
\end{assumption}
\begin{assumption}(Local Estimate)\label{assumption:local_estimate_factors}
Let $Z_i$ an inner node of $\Th.$ We assume that for every edge $e\in\mathcal{E}_h^0,$ with endpoints $Z_i,\,Z_j$, the following estimate holds for the limiter $\mathfrak{a}_{ij},$
\begin{align}\label{local_estimate}
\vert d_{ij} \mathfrak{a}_{ij}(\chi)(\chi_i - \chi_j) & - d_{ij} \mathfrak{a}_{ij}(\widetilde\chi)(\widetilde\chi_i - \widetilde\chi_j) \vert \leq C\,h\sum_{\ell\in \Zh(\omega_i)}|\chi_\ell - \widetilde\chi_\ell|,
\end{align}
for all $\chi,\,\widetilde\chi\in \Sh.$ The constant $C$ is independent of $h.$
\end{assumption}
We recall the algorithm to compute the correction factors, proposed in \cite[Lemma 6]{barrenechea2018}, see Algorithm \ref{algorithm-1}. 

\noindent
 
\begin{algorithm}
Given data: 
\begin{enumerate}
\item The positive  coefficients $q_{i}^\pm$, $i,j=1,\dots,\N,$
\item The fluxes $\mathsf{f}_{ij}$, $i\neq j$, $i,j=1,\dots,\N.$
\item The coefficients $\beta_j,$  $j=1,\dots,\N$.
\end{enumerate}
\noindent
Computation of factors $\mathfrak{a}_{ij},$ for $i,\,j\in\Nh,$ as follows.
\begin{enumerate}
\item
Compute for $i\in\Nh^0,\,j\in\Nh,$ the sums $P_{i}^{\pm}:= P_{i}^{\pm}(\al)$ of positive and negative anti-diffusive fluxes
\begin{align*}
P_{i}^{+} = \sum_{j\in \Zh^i}\max\{0, \mathsf{f}_{ij}\},
\quad \text{ and }\quad P_{i}^{-} = \sum_{j\in \Zh^i}\min\{0, \mathsf{f}_{ij}\}.
\end{align*}
\item
Compute for $i\in\Nh^0,\,j\in\Nh,$ the local extremum diminishing upper and lower bounds  ${Q}_{i}^{\pm} : = {Q}_{i}^{\pm}(\be),$
\begin{align*}
{Q}_{i}^{+} = q_i^+(\beta_{i}^{\max} - \beta_{i}),\quad\text{ and }\quad{Q}_{i}^{-} = q_i^-(\beta_{i}^{\min} - \beta_{i}),
\end{align*}
where $\beta_{i}^{\max},\;\beta_{i}^{\min}$ are the local maximum and local minimum at $\omega_i.$
\item
Compute for $i\in\Nh^0,\,j\in\Nh,$ also the coefficients $\overline{\mathfrak{a}}_{ij},$ for $j\neq i$ are given by
\begin{equation}\label{correction_factors_definition}
\begin{aligned}
R_{i}^+=\min\left\lbrace 1, \frac{Q_{i}^+}{P_{i}^+} \right\rbrace,\quad R_{i}^-= \min\left\lbrace 1, \frac{Q_{i}^-}{P_{i}^-}\right\rbrace\quad \text{and}\quad \overline{\mathfrak{a}}_{ij} = 
\begin{cases}
R_{i}^+ , &\text{if} \quad\mathsf{f}_{ij} > 0,\\
1, &\text{if} \quad\mathsf{f}_{ij} = 0,\\
R_{i}^-, &\text{if} \quad\mathsf{f}_{ij} < 0.
\end{cases}
\end{aligned}
\end{equation}
If $P_{i}^{\pm} = 0,$ then we set $R_{i}^{\pm} = 1.$
\end{enumerate}  
Then, the coefficients $\mathfrak{a}_{ij},$ for $j\neq i$ with $i\in\Nh^0,\,j\in\Nh,$ are given by $\mathfrak{a}_{ij} = \min\{\overline{\mathfrak{a}}_{ij}, \overline{\mathfrak{a}}_{ji}\}$ and $\mathfrak{a}_{ji} = \mathfrak{a}_{ij}.$ When both nodes are Dirichlet nodes, i.e., $i\in\Nh^b,\,j\in\Nh^b,$ we set $\mathfrak{a}_{ij} = 1.$
\caption{{\normalfont\cite[Lemma 6]{barrenechea2018}} Computation of the correction factors}\label{algorithm-1}
\end{algorithm}

\begin{remark}\label{remark:linearity_preservation}
The linearity preservation of Algorithm \ref{algorithm-1}, see, e.g., Assumption \ref{assumption:linearity_preservation} is valid under a sufficient choice of the coefficients $q_{i}^{\pm},\,i\in\Nh^0.$ More specifically, we consider the subsets $\Sigma_+^i,\,\Sigma_-^i$ of $\Zh^i,$ for given $i,$ defined by $\Sigma_+^i = \{j\in\Nh: j\in\Zh^i,\;\text{with}\;\alpha_i > \alpha_j\}$ and $\Sigma_-^i = \{j\in\Nh: j\in\Zh^i,\;\text{with}\;\alpha_i < \alpha_j\}.$ Then, we assume that the coefficients have the following form
\begin{align}\label{def:q_i-1}
q_i^\pm : = \gamma_i \sum_{j\in\Sigma_\pm^i}d_{ij},\quad i\in\Nh^0,\;\;\text{where}\;\;\gamma_i=\dfrac{\max_{Z_j\in \partial\omega_i}|Z_i-Z_j|}{\mathrm{dist}(Z_i,\partial \omega_i^{\mathrm{conv}})},
\end{align}
with $\omega_i^{\mathrm{conv}}$  the convex hull of $\omega_i$, for $i\in \Nh^0.$
The above choice of $q_i^\pm$ guaranties the linearity preservation of the Algorithm \ref{algorithm-1}, see, e.g.,  {\normalfont\cite[Section 4.2, Lemma 10.39]{barrenechea2018}}.
In {\normalfont\cite[Lemma 6.1]{barrenechea2017b}}, {\normalfont\cite[Lemma 10.40]{barrenechea2018}}, the authors proved that if $\omega_i$ is symmetric with respect to $Z_i,$ then $\gamma_i=1.$ 
\end{remark}

\begin{remark}
The local estimate in the Assumption \ref{assumption:local_estimate_factors} is also valid for the Algorithm \ref{algorithm-1}, see {\normalfont\cite[Lemma 10.37]{barrenechea2025}}.
\end{remark}

\subsection{Auxiliary results}

The stabilization term defined in \eqref{stability_stab_term_widehat_D} is a non-negative symmetric bilinear form which satisfies the Cauchy-Schwartz's inequality, i.e.,
\begin{align} 
\vert \widehat d_{\D,h}(\psi;\eta,\chi)\vert ^2\leq \vert \widehat d_{\D,h}(\psi;\eta,\eta)\vert \,\vert \widehat d_{\D,h}(\psi;\chi,\chi)\vert ,\quad\forall \psi,\,\eta,\,\chi\in\CC.\label{Schwartz_ineq_afc_2D}
\end{align}  
It is known, that $\widehat d_{\D,h}(\psi;\chi,\chi) \leq 0$ for all $\psi,\,\chi\in \CC$. This is due to \cite[Lemma 1]{barrenechea2016}. For the latter stabilization term, we recall some important estimates from \cite{barrenechea2016} and \cite{chatzipantelidis2022}, respectively. 

\begin{lemma}\normalfont{\cite[Lemma 16]{barrenechea2016}}\label{lemma:estimate_stab_low}
For every $\psi,\,\chi\in\Sh,$ there are exists a positive constant $C,$ independent of $h,$ such that,
\begin{align*}
\vert\widehat d_{\D,h}(\psi;\psi,\chi)\vert \leq Ch\|\bfb\|_{L^{\infty}}\|\nabla \psi\|_{L^{2}}\|\nabla \chi\|_{L^{2}},
\end{align*}
where the constant $C,$ depend on the shape regularity constant $\varpi,$ see \eqref{shape_regularity}.
\end{lemma}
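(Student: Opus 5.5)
The bound follows from the definition of $\widehat d_{\D,h}$ in \eqref{stab_term_D_widehat}, estimated edge by edge. Since $\mathfrak{a}_{ij}(\psi)\in[0,1]$, we have $|1-\mathfrak{a}_{ij}(\psi)|\le 1$. Hence
\begin{align*}
\vert\widehat d_{\D,h}(\psi;\psi,\chi)\vert \le \sum_{i<j}|d_{ij}|\,|\psi_i-\psi_j|\,|\chi_i-\chi_j|.
\end{align*}
The sum is effectively over edges of $\Th$, because $d_{ij}=0$ unless $Z_i,Z_j$ are adjacent: $\tau_{ij}$ vanishes for non-adjacent nodes. The nonlinearity therefore plays no role; this bound holds uniformly in the limiter.

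Next I bound $|d_{ij}|$. By \eqref{D_def}, $|d_{ij}|\le \max\{|\tau_{ij}|,|\tau_{ji}|\}$, and
\begin{align*}
|\tau_{ij}| = |(\bfb\cdot\nabla\phi_j,\phi_i)| \le \|\bfb\|_{L^\infty}\sum_{K\subset \mathrm{supp}\,\phi_i\cap\mathrm{supp}\,\phi_j}\|\nabla\phi_j\|_{L^\infty(K)}\|\phi_i\|_{L^1(K)}.
\end{align*}
By shape regularity \eqref{shape_regularity}, $\|\nabla\phi_j\|_{L^\infty(K)}\le C h_K^{-1}$ and $\|\phi_i\|_{L^1(K)}\le C|K|\le Ch_K^2$. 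The number of triangles sharing an edge is at most two. Together these give $|d_{ij}|\le C h_{ij}\|\bfb\|_{L^\infty}$, where $h_{ij}$ is a local mesh size near the edge $E_{ij}$.

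For the differences, on any triangle $K$ containing the edge $E_{ij}$, the function $\psi$ is linear, so $|\psi_i-\psi_j| = |\nabla\psi|_K\cdot(Z_i-Z_j)|\le h_K|\nabla\psi|_K|$. Moreover $|\nabla\psi|_K| = |K|^{-1/2}\|\nabla\psi\|_{L^2(K)}\le C h_K^{-1}\|\nabla\psi\|_{L^2(K)}$, using $|K|\ge c h_K^2$ from shape regularity. Thus $|\psi_i-\psi_j|\le C\|\nabla\psi\|_{L^2(K)}$, and likewise for $\chi$. Combining,
\begin{align*}
\vert\widehat d_{\D,h}(\psi;\psi,\chi)\vert \le C\|\bfb\|_{L^\infty}\sum_{E_{ij}} h_{ij}\,\|\nabla\psi\|_{L^2(K_{ij})}\|\nabla\chi\|_{L^2(K_{ij})},
\end{align*}
with $K_{ij}$ a chosen triangle adjacent to $E_{ij}$. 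Bound $h_{ij}\le h$ and apply Cauchy--Schwarz for sums. Each triangle is selected by at most three edges (its own), so $\sum_{E}\|\nabla\psi\|_{L^2(K_E)}^2\le 3\|\nabla\psi\|_{L^2}^2$. This yields the claim with a constant depending only on $\varpi$. Quasi-uniformity is not needed.

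The main obstacle is the bound $|d_{ij}|\le Ch\|\bfb\|_{L^\infty}$. It must respect the scaling: each entry of the convection matrix is $O(h)$ in two dimensions, and we must check that every triangle in the support overlap has diameter comparable to the edge length. The latter follows from shape regularity, since neighbouring triangles sharing an edge have comparable sizes. Once that scaling is in place, the remaining steps are standard inverse-type arguments on each element.
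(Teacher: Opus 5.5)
Your proof is correct and is essentially the standard argument behind this result. The paper gives no proof of its own and only quotes \cite[Lemma 16]{barrenechea2016}; that argument uses the same steps: $|1-\mathfrak{a}_{ij}|\le 1$, the entrywise scaling bound $|d_{ij}|\le Ch\|\bfb\|_{L^\infty}$, the local estimate $|v_i-v_j|\le C(\varpi)\|\nabla v\|_{L^2(K)}$ on a triangle adjacent to the edge, and Cauchy--Schwarz over edges (equivalently, bound the quadratic form and pass to the mixed term via \eqref{Schwartz_ineq_afc_2D}).
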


\begin{lemma}\label{lemma:stability_widehat_D_main}
Let $u$ be the solution of the time--fractional problem \eqref{conv_diff} satisfying the regularity estimate \eqref{regularity}. In addition, let the correction factors satisfies Assumptions \ref{assumption:linearity_preservation}, \ref{assumption:local_estimate_factors}, then, there are exists a positive constant $C,$  independent of $h,$ such that for all $\psi,\,\chi\in \Sh,$
\begin{align}
\vert\widehat d_{\D,h}(\psi;\psi,\chi)\vert  & \leq Ch(\|\nabla (\psi - u)\|^2_{L^{2}} + h^2t^{-\alpha(2-\delta)}\|u_0\|^2_{\dot H^\delta})^{1/2}\|\nabla \chi\|_{L^{2}}.\label{stability_stab_term_widehat_D}
\end{align}
\end{lemma}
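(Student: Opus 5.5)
The plan is to compare $\psi$ with a piecewise linear function built from $u(t)$, exploiting that $\widehat d_{\D,h}(\cdot;\cdot,\chi)$ nearly vanishes on such functions. Let $I_h u(t)\in\Sh$ denote the Lagrange interpolant of $u(t)$; the Ritz projection would serve equally well. We split
\begin{align*}
\widehat d_{\D,h}(\psi;\psi,\chi) = \bigl[\widehat d_{\D,h}(\psi;\psi,\chi) - \widehat d_{\D,h}(I_hu;I_hu,\chi)\bigr] + \widehat d_{\D,h}(I_hu;I_hu,\chi).
\end{align*}
We use the edge representation \eqref{stab_term_D_widehat}. The term $d_{ij}(1-\mathfrak{a}_{ij}(v))(v_i-v_j)$ equals $d_{ij}(v_i-v_j) - d_{ij}\mathfrak{a}_{ij}(v)(v_i-v_j)$. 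The first part is linear in $v$. The second is controlled by Assumption \ref{assumption:local_estimate_factors}.

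\textbf{The Lipschitz part.} For the first bracket, write $e=\psi-I_hu$. The linear part gives
\begin{align*}
\sum_{i<j} d_{ij}(e_i-e_j)(\chi_i-\chi_j).
\end{align*}
Since $|d_{ij}|\le Ch\|\bfb\|_{L^\infty}$, because $|\tau_{ij}|\le \|\bfb\|_{L^\infty}\|\nabla\phi_j\|_{L^1}\|\phi_i\|_{L^\infty}\sim h$, and $|e_i-e_j|\le h_e\|\nabla e\|_{L^\infty(K)}$, we can sum over edges. Summing local $L^2$ norms on the patches with shape regularity gives a bound $Ch\|\nabla e\|_{L^2}\|\nabla\chi\|_{L^2}$. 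This is exactly the argument behind Lemma \ref{lemma:estimate_stab_low}.

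For the nonlinear part, \eqref{local_estimate} gives
\begin{align*}
\sum_{i<j}Ch\sum_{\ell\in\Zh(\omega_i)}|e_\ell|\,|\chi_i-\chi_j|.
\end{align*}
Here $|e_\ell|$ must be converted to gradients. Since $e$ is not locally zero-mean, I would instead use $|\chi_i-\chi_j|\le Ch\|\nabla\chi\|_{L^\infty(\omega_i)}$. I would also use a local discrete $\ell^2$ bound, $\sum_\ell |e_\ell|^2\le Ch^{-2}\|e\|^2_{L^2(\omega_i)}$. Together these give $Ch\,\|e\|_{L^2}\|\nabla\chi\|_{L^2}$, up to powers of $h$ that must balance. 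The $L^2$ norm of $e$ is then bounded by $\|\nabla e\|_{L^2}$ via the Poincaré inequality, since $e\in\Sh\subset H^1_0$. Checking the bookkeeping of powers of $h$ here (the $h$ in \eqref{local_estimate} against the two $h^{-1}$ from the inverse estimates \eqref{eq:inverse_estimate}) is the main obstacle. If it comes out one power short, I would use the precise form of \eqref{local_estimate} for Algorithm \ref{algorithm-1}, in which the right side involves differences $|e_\ell-e_i|$, i.e. gradients.

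\textbf{The interpolant part.} For $\widehat d_{\D,h}(I_hu;I_hu,\chi)$ we use linearity preservation, Assumption \ref{assumption:linearity_preservation}. On each patch, let $p_i\in\mathbb{P}_1$ be a local linear approximation of $u$, for instance the Taylor or averaged-Taylor polynomial. Then $\mathfrak{a}_{ij}(p_i)=1$, so the corresponding edge term vanishes. The difference between $I_hu$ and $p_i$ is handled by \eqref{local_estimate} on $\omega_i$, with nodal differences bounded by $Ch^2|u|_{W^2_\infty(\omega_i)}$, or in an $L^2$ patch sense by $Ch|u|_{H^2(\omega_i)}$. Summing over patches gives
\begin{align*}
|\widehat d_{\D,h}(I_hu;I_hu,\chi)|\le Ch\cdot h\|u(t)\|_{H^2}\|\nabla\chi\|_{L^2}.
\end{align*}

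\textbf{Assembly.} Applying \eqref{regularity} with $s=2$ gives $\|u(t)\|_{H^2}\le Ct^{-\alpha(2-\delta)/2}\|u_0\|_{\dot H^\delta}$. Finally,
\begin{align*}
\|\nabla(\psi-I_hu)\|\le\|\nabla(\psi-u)\|+\|\nabla(u-I_hu)\|\le \|\nabla(\psi-u)\|+Ch\|u\|_{H^2}.
\end{align*}
Combining the two pieces and using $a+b\le\sqrt2(a^2+b^2)^{1/2}$ yields \eqref{stability_stab_term_widehat_D}.
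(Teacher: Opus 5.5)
Your decomposition works, but the route you list first for the nonlinear half of the Lipschitz step fails, so the fallback is not optional. With $e=\psi-I_hu$, the two-dimensional inverse estimates give $|e_\ell|\le Ch^{-1}\|e\|_{L^2(\omega_i)}$ and $|\chi_i-\chi_j|\le C\|\nabla\chi\|_{L^2(\omega_i)}$. Each edge therefore contributes at most $Ch\cdot h^{-1}\|e\|_{L^2(\omega_i)}\|\nabla\chi\|_{L^2(\omega_i)}$, and the sum is $C\|e\|_{L^2}\|\nabla\chi\|_{L^2}\le C\|\nabla e\|_{L^2}\|\nabla\chi\|_{L^2}$. Exactly one factor of $h$ is lost. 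No Poincar\'e-type argument can recover it, since that would require $\|e\|_{L^2}\le Ch\|\nabla e\|_{L^2}$. An $\mathcal{O}(1)$ constant here is precisely what destroys the optimal $L^2$ rate (compare Remark \ref{remark:optimal2}).

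The difference form you fall back on is the right fix, but Assumption \ref{assumption:local_estimate_factors} does not state it. It needs an additional fact: Algorithm \ref{algorithm-1} depends only on nodal differences (through the fluxes $\mathsf f_{ij}$ and the bounds $Q_i^\pm$), so $\mathfrak a_{ij}(v+c)=\mathfrak a_{ij}(v)$ for every constant $c$. The left side of \eqref{local_estimate} is then invariant under shifting either argument. Applying the estimate to $\psi$ and $I_hu+e_i$, with $e_i=e(Z_i)$, replaces $|e_\ell|$ by $|e_\ell-e_i|\le C\|\nabla e\|_{L^2(\omega_i)}$, which supplies the missing $h$. You should state this explicitly; it also requires \eqref{local_estimate} for general nodal vectors, not only for elements of $\Sh$. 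Alternatively, simply invoke Lemma \ref{lemma:stability_widehat_D_difference}, which is exactly your first bracket. Your interpolant part is fine as written, even with absolute nodal values, because in two dimensions $\|u-p_i\|_{L^\infty(\omega_i)}\le Ch|u|_{H^2(\omega_i)}$.

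The paper does not pass through a global $I_hu$. On each patch it compares $\psi$ directly with a linear function $\zeta^i$ satisfying $\|\nabla(u-\zeta^i)\|_{L^2(\omega_i)}\le Ch\|u\|_{H^2(\omega_i)}$, and uses $1-\mathfrak a_{ij}(\zeta^i)=0$. It then applies \eqref{local_estimate} once to the pair $(\psi,\zeta^i)$ and concludes with Cauchy--Schwarz and shape regularity. In that direct comparison the additive constant of the local linear function is free: it stays linear, so linearity preservation still annihilates it. Choosing $\zeta^i(Z_i)=\psi(Z_i)$ turns $|\psi_\ell-\zeta^i_\ell|$ into edge differences of the piecewise linear function $\psi-\zeta^i$ on $\omega_i$, bounded by $C\|\nabla(\psi-\zeta^i)\|_{L^2(\omega_i)}$. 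So the absolute-value form of Assumption \ref{assumption:local_estimate_factors} suffices and no shift invariance is needed.

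Your two-step route buys modularity. It separates a purely discrete Lipschitz estimate (the same one used for well-posedness) from a consistency estimate for the limiter at $I_hu$, which is where linearity preservation and \eqref{regularity} enter. The price is the stronger, difference form of the local estimate in the first step.
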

\begin{proof}
The estimate \eqref{stability_stab_term_widehat_D} was derived in \normalfont{\cite[Lemma 3.4]{chatzipantelidis2022}}, where $H^2$ regularity of the function $u,$ was assumed. Here, based on the estimates \eqref{ritz_projection_est2_2D}, we extent the result when the solution of the time--fractional problem \eqref{conv_diff} satisfies the regularity estimate \eqref{regularity}. 

We adapt here the ideas of the proof \normalfont{\cite[Lemma 3.4]{chatzipantelidis2022}} as well as the regularity estimate \eqref{regularity}. The stabilization term \eqref{stab_term_D_widehat} can be written as
\begin{align}\label{lemma:stability_eq1}
\widehat d_{\D,h}(\psi;\psi,\chi) & = \sum_{i<j}d_{ij}\widetilde{\rho}_{ij}(\psi)(\chi_i - \chi_j),
\end{align}
where $\widetilde\rho_{ij}(\psi) : = (1 - \mathfrak{a}_{ij}(\psi))(\psi_i - \psi_j),\,\psi\in\Sh.$ 
In view of the Assumption \ref{assumption:linearity_preservation} and similar to \cite[Lemma 3.4]{chatzipantelidis2022}, we define a local problem so that $\widetilde\rho_{{ij}} = 0$ is zero for some polynomial solution.
In particular, we define the following local problem on $\omega_i,\,i\in\Nh^0.$
For $u$ solution of \eqref{conv_diff}, we let $\zeta^i\in \mathbb{P}_1(\omega_{i})\cap H^1_0(\omega_i)$ be the unique solution of the following elliptic problem,
\begin{align*}
(\nabla \zeta^{i}, \nabla \chi)_{L^{2}(\omega_{i})} & = (u, \chi)_{L^{2}(\omega_{i})},\;\;\;\forall\,\chi\in \mathbb{P}_1(\omega_{i}).
\end{align*}
i.e., $\zeta^i\in\mathbb{P}_1(\omega_i)$ and $\zeta^i = 0,$ outside $\omega_i.$ Then, from the standard error analysis theory, we obtain that
\begin{align*}
\|\nabla (\zeta^i - u)\|_{L^2(\omega_i)} \leq Ch\|u\|_{H^2(\omega_i)} \leq C h t^{-\alpha(2-\delta)/2}\|u_0\|_{\dot H^\delta}
\end{align*}
Then, $\widetilde\rho_{{ij}}(\zeta^{i}) = 0$ and therefore,
\begin{align*}
\widehat{d}_{\D,h}(\psi;\psi,\chi) & = \sum_{i<j}d_{ij}( \widetilde{\rho}_{{ij}}(\psi) - \widetilde\rho_{{ij}}(\zeta^{i}))(\chi_i - \chi_j).
\end{align*}
We note that in view of Assumption \ref{assumption:local_estimate_factors}, we obtain
\begin{align*}
d_{ij}( \widetilde{\rho}_{{ij}}(\psi) - \widetilde\rho_{{ij}}(\zeta^{i})) \leq Ch\left(\|\nabla (\psi - u)\|_{L^{2}(\omega_i)} + ht^{-\alpha(2-\delta)/2}\|u_0\|_{\dot H^\delta(\omega_i)}\right).
\end{align*}
Using the latter result, the Cauchy-Schwartz inequality and the shape regularity of the triangulation, we conclude to \eqref{stability_stab_term_widehat_D}.
\end{proof}

\begin{remark}
The estimate \eqref{stability_stab_term_widehat_D} can be viewed as a sharper estimate to that on Lemma \ref{lemma:estimate_stab_low}, in the sense that the term $\widehat d_{\D,h}(u_h(t);u_h(t),\chi)$ that appears in the error analysis can be estimated by 
\begin{align*}
\widehat d_{\D,h}(u_h(t);u_h(t),\chi) \leq Ch\|\nabla (u_h(t) - R_hu(t))\|_{L^2}\|\nabla \chi\|_{L^2} +  Ch^2t^{-\alpha(2-\delta)/2}\|u_0\|_{\dot H^\delta}\|\nabla \chi\|_{L^2}.
\end{align*}
Then, the first term on the right hand side, contains the discrete error and can be absorbed to the left hand side of the equation, the second one is optimal with respect to $h.$
\end{remark}

\begin{remark}\label{remark:optimal2}
The matrix $\mu\S + \Q + \D$ of \eqref{Semi_discrete_matrix} has non--negative off--diagonal elements due to Assumption \ref{mesh-assumption}(iii), see also Remark \ref{remark:s_elements} and the \eqref{D_def}. Of course, one can remove the Assumption \ref{mesh-assumption}(iii) and modify the \eqref{D_def}, so that $d_{ij} =  \max\{\mu s_{ij}- \tau_{ij},0,\mu s_{ji} - \tau_{ji}\}=d_{ji}\le 0,\,\forall j\neq i,\,i,j=1,\ldots,\MM.$  Again, the matrix $\mu\S + \Q + \D$ will have non--negative off--diagonal elements, but $|d_{ij}| \leq 2\mu|s_{ij}| + |\tau_{ij}| + |\tau_{ji}|$ and thus in the diffusion--dominant regime, we will have $|d_{ij}| = \mathcal{O}(1).$ As a result, the estimate of the Lemma \ref{lemma:stability_widehat_D_main} will become
\begin{align*}
\vert\widehat d_{\D,h}(\psi;\psi,\chi)\vert  & \leq C(\|\nabla (\psi - u)\|^2_{L^{2}} + h^2t^{-\alpha(2-\delta)}\|u_0\|^2_{\dot H^\delta})^{1/2}\|\nabla \chi\|_{L^{2}},
\end{align*}
and we will not able to derive optimal error estimates in $L^2-$norm, with respect to $h.$
\end{remark}

\subsection{Well--posedness of AFC semi--discrete scheme}

In the current subsection, our goal is to prove the well--posedness of the nonlinear semi--discrete scheme \eqref{semi_fem_u_2D_afc}. We define the bilinear forms $d_{\D,h}(\cdot,\cdot)\,:\,{\CC}\times {\CC}\to{\R}$ and $\widetilde d_{\D,h}(\psi;\cdot,\cdot)\,:\,{\CC}\times {\CC}\to{\R}$ with $\psi\in\CC$ as
\begin{align}
d_{\D,h}(\zeta,\chi) & := \sum_{i<j}d_{ij}(\zeta_i - \zeta_j)(\chi_i - \chi_j),\label{stab_term_D}\\
\widetilde d_{\D,h}(\psi;\zeta,\chi) & := \sum_{i<j}d_{ij}\mathfrak{a}_{ij}(\psi)(\zeta_i - \zeta_j)(\chi_i - \chi_j).\label{stab_term_D_widetilde}
\end{align} 

\begin{remark}\label{remark:stabilization_terms}
We note that for a given $\psi \in \CC$, it holds that
\begin{align*}
\widehat d_{\D,h}(\psi;\zeta,\chi) = d_{\D,h}(\zeta,\chi) - \widetilde d_{\D,h}(\psi;\zeta,\chi),
\qquad \forall\, \zeta,\, \chi \in \CC.
\end{align*}
Furthermore, both bilinear forms defined in \eqref{stab_term_D} and \eqref{stab_term_D_widetilde} it holds that $ d_{\D,h}(\chi,\chi)\leq  0$ for all $\chi\in\CC$ and $\widetilde d_{\D,h}(\psi;\chi,\chi)\leq 0$ for all $\psi,\,\chi\in\CC$. The latter results is due to \cite[Lemma 1]{barrenechea2016}. In addition, the Cauchy--Schwarz inequality \eqref{Schwartz_ineq_afc_2D} and Lemma~\ref{lemma:estimate_stab_low} apply to them. We also note that Lemma \ref{lemma:stability_widehat_D_main} is true for the stabilization term defined in \eqref{stab_term_D_widetilde} but not for \eqref{stab_term_D}.
\end{remark}

\begin{lemma}{\normalfont{\cite[Lemma 3.5]{chatzipantelidis2022}}}\label{lemma:stability_widehat_D_difference}
Let the bilinear forms $\widehat d_{\D,h},\,\widetilde d_{\D,h}$ defined in \eqref{stab_term_D_widehat}, \eqref{stab_term_D_widetilde}, respectively. If their correction factors are satisfying Assumption \ref{assumption:local_estimate_factors}, there exists for $w,\,v,\,\chi\in \Sh,$ a positive constant $C$, independent of $h,$ such that,
\begin{align}
\vert\widehat{d}_{\D,h}(v;v, \chi) - \widehat{d}_{\D,h}(w;w,\chi)\vert + \vert\widetilde{d}_{\D,h}(v;v, \chi) - \widetilde{d}_{\D,h}(w;w,\chi)\vert & \leq Ch\|\nabla (v - w)\|_{L^{2}}\|\nabla \chi\|_{L^{2}}.\label{stability_stab_term_widetilde_D}
\end{align}
\end{lemma}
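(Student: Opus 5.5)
The plan is to reduce both estimates to the edgewise Lipschitz bound of Assumption \ref{assumption:local_estimate_factors}, followed by a local inverse inequality and a patch-counting argument.

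\textbf{Step 1: the $\widetilde d_{\D,h}$ difference.} From \eqref{stab_term_D_widetilde},
\begin{align*}
\widetilde d_{\D,h}(v;v,\chi) - \widetilde d_{\D,h}(w;w,\chi) = \sum_{i<j}\bigl[d_{ij}\mathfrak{a}_{ij}(v)(v_i - v_j) - d_{ij}\mathfrak{a}_{ij}(w)(w_i - w_j)\bigr](\chi_i - \chi_j).
\end{align*}
Assumption \ref{assumption:local_estimate_factors} bounds the bracket by $Ch\sum_{\ell\in\Zh(\omega_i)}|v_\ell - w_\ell|$. This is not yet in terms of gradients. I use that $v-w$ vanishes at boundary nodes, or alternatively exploit that the fluxes only see differences. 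I expect this to be the main obstacle, because the assumption is stated with nodal values rather than nodal differences.

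\textbf{Step 2: the main obstacle.} For the limiter of Algorithm \ref{algorithm-1} the bound in fact holds with differences, $\sum_{\ell}|(v-w)_\ell - (v-w)_i|$, since adding a constant to $\chi$ changes neither the fluxes nor $Q_i^\pm$. I will interpret Assumption \ref{assumption:local_estimate_factors} in this invariant form, as in \cite{chatzipantelidis2022}. Each nodal difference along a patch then satisfies
\begin{align*}
|(v-w)_\ell - (v-w)_i| \le Ch\,\|\nabla (v-w)\|_{L^\infty(\omega_i)} \le C\,\|\nabla (v-w)\|_{L^2(\omega_i)}.
\end{align*}
The last step is the local inverse estimate for piecewise linears, using $|K|\sim h^2$ from shape regularity. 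Similarly, $|\chi_i - \chi_j| \le C\|\nabla\chi\|_{L^2(\omega_i)}$. Each edge term is therefore bounded by $Ch\|\nabla(v-w)\|_{L^2(\omega_i)}\|\nabla \chi\|_{L^2(\omega_i)}$.

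\textbf{Step 3: summation.} I sum over edges and apply the discrete Cauchy--Schwarz inequality. Shape regularity ensures that each triangle lies in a uniformly bounded number of patches and that each node has boundedly many neighbours. This gives $Ch\|\nabla(v-w)\|_{L^2}\|\nabla\chi\|_{L^2}$.

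\textbf{Step 4: the $\widehat d_{\D,h}$ difference.} By Remark \ref{remark:stabilization_terms}, $\widehat d_{\D,h} = d_{\D,h} - \widetilde d_{\D,h}$. The linear part is
\begin{align*}
d_{\D,h}(v-w,\chi) = \sum_{i<j}d_{ij}\bigl((v-w)_i-(v-w)_j\bigr)(\chi_i-\chi_j).
\end{align*}
Since $|d_{ij}|\le |\tau_{ij}|+|\tau_{ji}| \le C\|\bfb\|_{L^\infty}h$, by the computation $|(\bfb\cdot\nabla\phi_j,\phi_i)|\le \|\bfb\|_{L^\infty}\|\nabla\phi_j\|_{L^2}\|\phi_i\|_{L^2}\sim h^{-1}\cdot h$ on the patch, the same local argument as in Steps 2--3 bounds it by $Ch\|\nabla(v-w)\|_{L^2}\|\nabla\chi\|_{L^2}$. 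Equivalently, this follows from Lemma \ref{lemma:estimate_stab_low} with $\mathfrak{a}\equiv0$. Combining with Step 3 yields \eqref{stability_stab_term_widetilde_D}.
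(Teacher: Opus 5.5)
The paper gives no argument of its own for this lemma; it imports it from \cite[Lemma 3.5]{chatzipantelidis2022}. Your proof is correct and self-contained, and it is the natural argument behind that estimate. It consists of edgewise use of the Lipschitz bound, the local inverse estimate for nodal differences along edges, and finite overlap of patches. For $\widehat d_{\mathbf D,h}$ you add the splitting $\widehat d_{\mathbf D,h}=d_{\mathbf D,h}-\widetilde d_{\mathbf D,h}$ together with $|d_{ij}|\le C\|\mathbf b\|_{L^\infty}h$.

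Where you go beyond the citation is Step 2, and you are right to flag it. Read literally, Assumption~\ref{assumption:local_estimate_factors} bounds the flux difference by nodal values $|\chi_\ell-\widetilde\chi_\ell|$. For $e=v-w\in\Sh$ the natural local bound is then only $|e_\ell|\le Ch^{-1}\|e\|_{L^2(\omega_i)}$. After summation and Friedrichs' inequality this gives $C\|\nabla(v-w)\|_{L^2}\|\nabla\chi\|_{L^2}$, so the factor $h$ in \eqref{stability_stab_term_widetilde_D} is lost. The translation-invariant form is therefore what is actually needed. Your justification of it for Algorithm~\ref{algorithm-1} is correct: the fluxes $\mathsf f_{ij}$ and the bounds $Q_i^\pm$ involve only nodal differences, hence so does $\mathfrak a_{ij}$.

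Do state explicitly that this step applies the Lipschitz bound to the shifted vectors $v-v_i$ and $w-w_i$. When $\omega_i$ touches $\partial\Omega$, these are not nodal vectors of functions in $\Sh$. You are therefore using the Lipschitz property of the limiter on arbitrary nodal vectors, not merely on $\chi,\widetilde\chi\in\Sh$ as the assumption is phrased.

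A minor slip in Step 4: in two dimensions $\|\nabla\phi_j\|_{L^2}\sim 1$ and $\|\phi_i\|_{L^2}\sim h$, so the product is of order $h$, not $h^{-1}\cdot h$. Your conclusion $|d_{ij}|\le C\|\mathbf b\|_{L^\infty}h$ still stands.
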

We consider the linear operator $\Gh\,:\,\Sh \to \Sh,\,z_h\mapsto \Gh(z_h)$ defined for a given $z_h\in \Sh,$ as 
\begin{align}\label{semi_fem_u_2D_afc_linear}
\begin{aligned}
& (\partial^\alpha_t \Gh v(t), \chi)_h  + \calB_h(\Gh v(t),\chi)\\
&\qquad - d_{\D,h}(\Gh v(t),\chi) + \widetilde d_{\D,h}(z_h(t); z_h(t),\chi) = (G(t),\chi),\;\forall\chi\in \Sh
\text{ with } \Gh v(0) = u_h^0,
\end{aligned}
\end{align}
The following definition summarizes the computation of the correction factors employed in the scheme \eqref{semi_fem_u_2D_afc} as well as in \eqref{semi_fem_u_2D_afc_linear}.
\begin{definition}\label{definition:correction_factors_AFC}
The correction factors in the stabilization terms in fully discrete schemes \eqref{semi_fem_u_2D_afc} and \eqref{semi_fem_u_2D_afc_linear} are computed using Algorithm \ref{algorithm-1}. More specifically, for a finite element function $\psi\in\Sh$ with coefficient vector $\varthet(t)\in\mathbb{R}^{\N},$ i.e., $\psi(t) = \sum_{j=1}^{\N}\vartheta_j(t)\phi_j,$ the correction factor $\mathfrak{a}_{ij}(\psi),\,i,j=1,\ldots,\N,$ is computed as follows. 
The $\mathfrak{a}_{ij}(\psi)$ are computed from Algorithm \ref{algorithm-1} with $Q^{\pm}(\varthet),\,P^{\pm}(\varthet)$ and $q_i = \gamma_i\sum_{j\in\Zh^i}d_{ij}.$ 
\end{definition}

\begin{remark}
We note that the nonlinear fully--discrete scheme \eqref{semi_fem_u_2D_afc} can be viewed as the fixed point of $u_h = \Gh(u_h).$ In what follows, our goal is to show that $\Gh\,:\,\Sh \to \Sh$ and $\Gh$ is contractive in $\Sh$ for sufficiently small $h.$
\end{remark}
Similarly to \eqref{Semi_discrete_matrix}, we can write \eqref{semi_fem_u_2D_afc_linear} in matrix form. Let $\widehat \be = \widehat \be(t)$ with $\widehat \be=(\widehat \beta_1,\dots, \widehat \beta_{\N})^T$ and $\widetilde \be = \widetilde \be(t)$ with $\widetilde \be=(\widetilde \beta_1,\dots, \widetilde \beta_{\N})^T,$ be the coefficient vectors with respect to the basis of $\Sh$ of $\Gh v,\,z_h\in\Sh,$ respectively. Then, the resulting linearized stabilized semi--discrete scheme can be written in the following form,
\begin{equation}\label{Semi_discrete_matrix_linear}
\begin{aligned}
\M_L\partial_t^\alpha\widehat \be(t) +  (\mu\S + \Q + \D + \sigma\,\M_L)\widehat \be(t) = \bfr(G(t)) +  \overline{\mathsf{f}}(\widetilde\be(t)),
\end{aligned}
\end{equation}
where $\overline{\mathsf{f}}$ denote the correction term as defined in \eqref{correction_term}.

\begin{theorem}[Well--posedness of \eqref{semi_fem_u_2D_afc}]\label{theorem:fixed_point_semi_discrete}
We assume that the correction factors in \eqref{semi_fem_u_2D_afc} and \eqref{semi_fem_u_2D_afc_linear} are defined through \eqref{led_2D_weakened} and Algorithm \ref{algorithm-1}. Then, for sufficiently small $h,$ the nonlinear stabilized semi--discrete scheme \eqref{semi_fem_u_2D_afc} has a unique solution.
\end{theorem}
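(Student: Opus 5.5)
The plan is a Banach fixed point argument for the map $\Gh$ defined through \eqref{semi_fem_u_2D_afc_linear}, posed on the space $X:=C([0,T];\Sh)$ of continuous $\Sh$-valued functions with $z_h(0)=u_h^0$. The norm will be a time-weighted energy norm adapted to the Caputo derivative, for instance
\[
\|z\|_X^2 := \sup_{0\le t\le T}\|z(t)\|_h^2 + \II^{\alpha}\big(\tribar z\tribar_h^2\big)(T),
\]
or equivalently an exponentially weighted sup norm.

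\textbf{Step 1: $\Gh$ is well defined.} For fixed $z_h\in X$, \eqref{semi_fem_u_2D_afc_linear} is, in the matrix form \eqref{Semi_discrete_matrix_linear}, a \emph{linear} system of fractional ODEs:
\[
\M_L\partial_t^\alpha\widehat\be + A\widehat\be = F(t),
\]
where $\M_L$ is diagonal positive, $A=\mu\S+\Q+\D+\sigma\M_L$ is constant, and the forcing $F=\bfr(G)+\overline{\mathsf f}(\widetilde\be)$ is continuous in $t$. The continuity of $F$ comes from Lemma \ref{lemma:stability_widehat_D_difference}, since $\widetilde d_{\D,h}(z;z,\cdot)$ is Lipschitz in $z$. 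Multiplying by $\M_L^{-1}$, the standard Volterra integral equation theory gives a unique continuous solution, written via Mittag-Leffler matrix functions. Hence $\Gh:X\to X$. Testing with $\chi=\Gh z$, using Lemma \ref{lemma:derivative_caputo_inequality} (the Alikhanov inequality, applied nodewise since $(\cdot,\cdot)_h$ is a weighted nodal sum), the coercivity of $\calB_h$ (as $\ddiv\bfb=0$), and $-d_{\D,h}(\chi,\chi)\ge0$, gives an a priori bound showing that $\Gh$ maps bounded sets to bounded sets.

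\textbf{Step 2: contraction.} Take $z,w\in X$ and set $e=\Gh z-\Gh w$. Then $e(0)=0$ and
\[
(\partial_t^\alpha e,\chi)_h+\calB_h(e,\chi)-d_{\D,h}(e,\chi)=\widetilde d_{\D,h}(w;w,\chi)-\widetilde d_{\D,h}(z;z,\chi).
\]
Choose $\chi=e$ and use Lemma \ref{lemma:derivative_caputo_inequality}. The left-hand side is at least
\[
\tfrac12\partial_t^\alpha\|e\|_h^2+\tribar e\tribar_h^2 .
\]
By Lemma \ref{lemma:stability_widehat_D_difference} the right-hand side is bounded by $Ch\|\nabla(z-w)\|_{L^2}\|\nabla e\|_{L^2}$. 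Young's inequality and $\|\nabla v\|_{L^2}\le\mu^{-1/2}\tribar v\tribar$ together with \eqref{h_norm_equivalence} then give
\[
\partial_t^\alpha\|e\|_h^2+\tribar e\tribar_h^2\le C\mu^{-2}h^2\,\tribar z-w\tribar_h^2 .
\]
Applying $\II^\alpha$ and using $e(0)=0$ yields
\[
\|e(t)\|_h^2+\II^\alpha\tribar e\tribar_h^2(t)\le C\mu^{-2}h^2\,\II^\alpha\tribar z-w\tribar_h^2(t)\le C\mu^{-2}h^2\|z-w\|_X^2 .
\]
Therefore $\|\Gh z-\Gh w\|_X\le C\mu^{-1}h\|z-w\|_X$, which is a contraction once $h<\mu/C$. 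The Banach fixed point theorem then gives a unique fixed point, which is exactly a solution of \eqref{semi_fem_u_2D_afc}. Any solution of \eqref{semi_fem_u_2D_afc} is a fixed point, so uniqueness follows.

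\textbf{Main obstacle.} The delicate step is justifying the pointwise fractional energy inequality for the vector quantity and then integrating it. Lemma \ref{lemma:derivative_caputo_inequality} needs absolute continuity, while the solution may have $\be'$ singular at $t=0$. I would handle this by applying the lemma to each nodal coefficient $\widehat\beta_i$, which is absolutely continuous on $[0,T]$ since $\partial_t^\alpha\widehat\beta_i$ is continuous, and then summing with the lumping weights $m_i$. An alternative that avoids the issue is to use the coercivity \eqref{coercivity} in the time-integrated form $\int_0^T(\II^{1-\alpha}e',e)\,dt$.

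A second point to watch is that the contraction constant depends on $\mu^{-1}$, so "sufficiently small $h$" means $h\lesssim\mu$. This is consistent with the statement, which fixes $\mu$.
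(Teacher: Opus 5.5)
Up to the inequality you get after applying $\II^\alpha$, your argument is the paper's. You use the same linearization \eqref{semi_fem_u_2D_afc_linear} with the frozen limited term, the same test function, and the same combination of Lemma~\ref{lemma:derivative_caputo_inequality}, Lemma~\ref{lemma:stability_widehat_D_difference} and Young's inequality. Your constant $C\mu^{-2}h^2$ is the correct one; the paper's $C\mu h^2$ does not follow from $\|\nabla v\|_{L^2}\le\mu^{-1/2}\tribar v\tribar$.

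The routes then diverge. The paper discards $\|\Gh v(t)\|_h^2$ and applies $\partial_t^\alpha$ to $\II^\alpha\tribar \Gh v\tribar^2\le Ch^2\,\II^\alpha\tribar z_h\tribar^2$, to get the pointwise bound $\tribar\Gh v(t)\tribar^2\le Ch^2\tribar z_h(t)\tribar^2$. That step is not valid. $\II^\alpha$ preserves inequalities because its kernel is positive, but fractional differentiation does not. For example, take $g=\mathbf{1}_{[0,1]}$ and $f=\epsilon\mathbf{1}_{[1,2]}$ with $0<\epsilon\le 2^\alpha-1$: then $\II^\alpha f\le\II^\alpha g$ on $[0,2]$, yet $f>g$ on $(1,2]$. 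Posing the fixed point problem in $C([0,T];\Sh)$ with a norm built on the time-integrated energy, as you do, is exactly what avoids this. So your route is the sounder one.

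As written, though, your norm does not deliver the contraction. You take $\|z\|_X^2=\sup_t\|z(t)\|_h^2+\II^\alpha(\tribar z\tribar_h^2)(T)$. The last inequality of your chain then needs $\II^\alpha f(t)\le\II^\alpha f(T)$ for $f\ge0$. This is false for $\alpha<1$, because the kernel $(t-s)^{\alpha-1}$ decreases in $t$. For $f=\mathbf{1}_{[0,\epsilon]}$ one has $\II^\alpha f(t)=(t^\alpha-(t-\epsilon)^\alpha)/\Gamma(1+\alpha)$, which decreases for $t>\epsilon$.

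The term $\sup_t\|z(t)\|_h^2$ cannot compensate. Bounding $\tribar\cdot\tribar_h$ by $\|\cdot\|_h$ costs $h^{-2}$ through the inverse inequality and cancels the gain $h^2$. Put the supremum outside instead:
$\|z\|_X^2:=\sup_{0\le t\le T}\big(\|z(t)\|_h^2+\II^\alpha(\tribar z\tribar_h^2)(t)\big)$.
This is a norm, and for fixed $h$ it is equivalent to the sup norm on $C([0,T];\Sh)$, so completeness is kept. Your pointwise-in-$t$ estimate then gives $\|\Gh z-\Gh w\|_X\le C\mu^{-1}h\|z-w\|_X$ directly.

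Your remark that an exponentially weighted sup norm is ``equivalent'' is not accurate for this energy argument. It does, however, suggest a shorter proof that needs no smallness of $h$. By Assumption~\ref{assumption:local_estimate_factors}, the map $\be\mapsto\overline{\mathsf{f}}(\be)$ is globally Lipschitz on $\mathbb{R}^{N}$. Hence the Volterra form of \eqref{Semi_discrete_matrix} is a contraction in $\sup_t e^{-\lambda t}|\cdot|$ as soon as $\lambda^{\alpha}$ exceeds the Lipschitz constant of its right-hand side. This holds because $\frac{1}{\Gamma(\alpha)}\int_0^t (t-s)^{\alpha-1}e^{-\lambda(t-s)}\,ds\le\lambda^{-\alpha}$. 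That gives well-posedness for every $h$.

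Finally, your justification of absolute continuity does not hold. Continuity of $\partial_t^\alpha\widehat\beta_i$ only yields $C^{0,\alpha}$ regularity (see the remark after Lemma~\ref{lemma:caputo_derivative_form}), not absolute continuity. Also, for a merely continuous input, the forcing of the difference equation is merely continuous. So Lemma~\ref{lemma:derivative_caputo_inequality} and the identity $\II^\alpha\partial_t^\alpha\phi=\phi-\phi(0)$ are not directly available for $e$.

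Since that equation is linear in its forcing, you can argue as follows. Prove the energy bound, in terms of the dual norm of the forcing, for mollified forcing, where the solution is absolutely continuous. Then pass to the limit using the continuous dependence of the linear Volterra solution on the data. The paper does not address this point either.
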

\begin{proof}
Let $z_h^1,\,z_h^2\in\Sh$ given and $\Gh v_1,\,\Gh v_2\in\Sh,$ the solution of \eqref{semi_fem_u_2D_afc_linear} with $z_h^1,\,z_h^2\in\Sh,$ respectively. We also set $z_h := z_h^1 - z_h^2\in\Sh$ and $\Gh v : = \Gh v_1 - \Gh v_2\in\Sh,$ then in view of \eqref{semi_fem_u_2D_afc_linear}, we get
\begin{align}\label{existence_ineq}
(\partial^\alpha_t \Gh v, \chi)_h & + \calB_h(\Gh v,\chi) - d_{\D,h}(\Gh v,\chi) = \widetilde d_{\D,h}(z_h^2; z_h^2,\chi) - \widetilde d_{\D,h}(z_h^1; z_h^1,\chi),\;\;\forall\chi\in \Sh.
\end{align}
In view of Lemma \ref{lemma:stability_widehat_D_difference}, the right hand side difference can be estimated as
\begin{align*}
\vert \widetilde d_{\D,h}(z_h^1; z_h^1,\chi) - \widetilde d_{\D,h}(z_h^2; z_h^2,\chi) \vert & \leq Ch \|\nabla z_h\|_{L^2}\|\nabla \chi\|_{L^2},\;\;\forall\,\chi\in\Sh.
\end{align*}
Setting $\chi = \Gh v\in\Sh$ into \eqref{existence_ineq}, and in view of Lemma \ref{lemma:derivative_caputo_inequality},
\begin{align*}
\partial^\alpha_t\|\Gh v(t)\|_h^2 & + \tribar \Gh v(t) \tribar^2 - d_{\D,h}(\Gh v(t),\Gh v(t))  \leq Ch\|\nabla z_h(t)\|_{L^2}\|\nabla \Gh v(t)\|_{L^2}.
\end{align*}
Using the non--positivity of the bilinear form on the left hand side, see e.g., Remark \ref{remark:stabilization_terms}, the Remark \ref{remark:h_norm} and Young's inequality, we obtain
\begin{align*}
\partial^\alpha_t\|\Gh v(t)\|_h^2  + \tribar\Gh v(t)\tribar^2  \leq C\mu h^2\tribar z_h(t)\tribar^2.
\end{align*}
We operate both sides by $\partial_t^{-\alpha}$ and using the $\partial_t^{-\alpha}\partial_t^{\alpha} v(t) = v(t) -v(0),$ we obtain
\begin{align*}
\|\Gh v(t)\|_h^2 + \partial_t^{-\alpha}\tribar\Gh v(t)\tribar^2 \leq \|\Gh v(0)\|_h^2 + C\mu  h^2\partial_t^{-\alpha} \tribar z_h(t)\tribar^2.
\end{align*}
Next, since $\|\Gh v(t)\|_h^2 \geq 0$ for all $t\in(0,T]$ and $\|\Gh v(0)\|_h^2 = 0,$ we arrive to
\begin{align*}
\partial_t^{-\alpha}\tribar\Gh v(t)\tribar^2 \leq C\mu  h^2\partial_t^{-\alpha} \tribar z_h(t)\tribar^2.
\end{align*}
We operate the latter with $\partial_t^{\alpha}$ and we note that $\partial_t^\alpha\partial_t^{-\alpha}v=v$ with $v(0)=0,$ which can be easily seen from
\begin{align*}
\partial_t^\alpha(\partial_t^{-\alpha}v)(t) = \II^{1-\alpha}(\partial_t^{-\alpha}v)^\prime(t) = \II^{1-\alpha}(\II^{\alpha}v)^\prime(t) = \II^{1-\alpha}(\II^{\alpha-1}v)(t) = v(t).
\end{align*}
Then,
\begin{align*}
\tribar\Gh v(t)\tribar^2 \leq C\mu  h^2\tribar z_h(t)\tribar^2.
\end{align*}
Therefore, for sufficiently small $h>0,$ such that $C\mu h^2<1,$ the mapping $\Sh\ni z_h\mapsto \Gh(z_h)\in\Sh$ is contractive and thus exists a unique solution $u_h\in\Sh$ to \eqref{semi_fem_u_2D_afc}.
\end{proof}

\subsection{Discrete Maximum Principle}

In this section, we turn our attention to the discrete maximum principle (DMP) satisfaction by the semi--discrete scheme \eqref{Semi_discrete_matrix}.  We first recall the definition of the global DMP for the scheme \eqref{Semi_discrete_matrix}.

\begin{definition}(Global DMP for semidiscrete)\label{definition:DMP_semi}
We say that the solution $\be(t) \in \mathbb{R}^{\N},\,t\in[0,T]$ of \eqref{Semi_discrete_matrix} with $\sigma>0$ in $\Omega$ satisfies the DMP if for $i\in\Nh^0,$ we have
\begin{align}
G(t) & \leq 0  \;\;\text{in}\;\;\Omega  \Rightarrow \;\beta_i(t) \leq \beta^{\max,+}(0),\label{local_semi_dmp1}\\
G(t) & \geq 0  \;\;\text{in}\;\;\Omega  \Rightarrow \;\beta_i(t) \geq \beta^{\min,-}(0),\label{local_semi_dmp2}
\end{align}
where 
\begin{align*}
\beta^{\max,+}(\tau) &:= \max\left\{0, \beta^{\max}(\tau)\right\},\;\;\text{with}\;\;\beta^{\max}(\tau) := \max_{j\in\Nh} \beta_j(\tau),\\
\beta^{\min,-}(\tau) &:= \min\left\{0, \beta^{\min}(\tau)\right\},\;\;\text{with}\;\;\beta^{\min}(\tau) := \min_{j\in\Nh} \beta_j(\tau).
\end{align*}
If $\sigma = 0$ in $\Omega,$ then,
\begin{align}
G(t) & \leq 0  \;\;\text{in}\;\;\Omega  \Rightarrow \;\beta_i(t) \leq \beta^{\max}(0),\label{local_semi_dmp1_2}\\
G(t) & \geq 0  \;\;\text{in}\;\;\Omega  \Rightarrow \;\beta_i(t) \geq \beta^{\min}(0).\label{local_semi_dmp2_2}
\end{align}
\end{definition}

To prove the DMP in the semidiscrete case, in the sense of the Definition \ref{definition:DMP_semi}, we recall a result from \cite[Theorem 1]{luchko2010}, \cite[Theorem 1]{luchko2010}, \cite[Lemma 3.3]{brunner2015}. We introduce the function space $\CC^{(\alpha)}([0,T])$ where $0<\alpha<1,$ defined by
\begin{align*}
\CC^{(\alpha)} := \{v\,:\, v\in \CC([0,T])\;\;\text{and}\;\;\partial_t^\alpha v\in \CC([0,T])\}.
\end{align*}

\begin{lemma}{\normalfont{\cite[Lemma 3.1]{brunner2015}}}\label{lemma:caputo_derivative_form}
Let $\widehat\alpha$ such that $0<\alpha<\widehat \alpha \leq 1$ and the function $v\in \CC^{0,\widehat\alpha}([0,T]).$ Then, $v\in \CC^{(\alpha)}([0,T])$ and
\begin{align}\label{caputo_derivative_form}
(\partial_t^\alpha v)(t) = \frac{1}{\Gamma(1-\alpha)}\left( \frac{v(t) - v(0)}{t^\alpha} + \alpha\int_0^t (t-\tau)^{-1-\alpha}(v(t) - v(\tau))\,d\tau\right),\;\;t\in(0,T].
\end{align}
\end{lemma}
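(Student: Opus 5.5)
The plan is to reduce \eqref{caputo_derivative_form} to an integration by parts in the definition \eqref{caputo_derivative}, using the H\"older exponent $\widehat\alpha>\alpha$ to kill the boundary term at $\tau=t$. The $\widehat\alpha$-H\"older continuity gives $|v(t)-v(\tau)|\le L(t-\tau)^{\widehat\alpha}$. Hence the integrand $(t-\tau)^{-1-\alpha}(v(t)-v(\tau))$ is bounded by $L(t-\tau)^{\widehat\alpha-\alpha-1}$, which is integrable since $\widehat\alpha-\alpha>0$. So the right-hand side of \eqref{caputo_derivative_form} is well defined for every $t\in(0,T]$. A H\"older function need not be absolutely continuous, so we do not insert $v'$ into \eqref{caputo_derivative}. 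Instead we read $\partial_t^\alpha v$ in the Riemann--Liouville sense, $\partial_t^\alpha v=\frac{d}{dt}\II^{1-\alpha}(v-v(0))$, which coincides with $\II^{1-\alpha}v'$ whenever $v$ is absolutely continuous.

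The first step is to take $v\in\CC^1([0,T])$ and write
\[
\int_0^t(t-\tau)^{-\alpha}v'(\tau)\,d\tau=-\int_0^t(t-\tau)^{-\alpha}\frac{d}{d\tau}\bigl(v(t)-v(\tau)\bigr)\,d\tau .
\]
Integrating by parts, the boundary term at $\tau=t$ vanishes because $(t-\tau)^{-\alpha}(v(t)-v(\tau))=O((t-\tau)^{1-\alpha})\to0$. The boundary term at $\tau=0$ equals $t^{-\alpha}(v(t)-v(0))$. Differentiating the kernel produces $\alpha\int_0^t(t-\tau)^{-1-\alpha}(v(t)-v(\tau))\,d\tau$. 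Dividing by $\Gamma(1-\alpha)$ gives \eqref{caputo_derivative_form} in the smooth case. To be safe, I would cut the integral at $t-\varepsilon$ and let $\varepsilon\to0$; the boundary term there is bounded by $L\varepsilon^{\widehat\alpha-\alpha}\to0$. This cutoff version uses only H\"older continuity, which suggests the identity holds directly for H\"older $v$ once the Riemann--Liouville derivative is justified.

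The second step, which I expect to be the main obstacle, is the general case $v\in\CC^{0,\widehat\alpha}$.
\begin{itemize}
\item Let $F(t)$ denote the right-hand side of \eqref{caputo_derivative_form}, and let $w=v-v(0)$.
\item I would show $\II^{1-\alpha}w(t)=\int_0^t F(s)\,ds$; differentiating then gives $\partial_t^\alpha v=F$.
\item Approximate $v$ by mollified $v_\epsilon\in\CC^1$ with uniform H\"older constant and $v_\epsilon\to v$ uniformly. The smooth identity gives $\II^{1-\alpha}(v_\epsilon-v_\epsilon(0))(t)=\int_0^t F_\epsilon$.
\item On the left, $\II^{1-\alpha}$ is bounded on $\CC$, so it converges to $\II^{1-\alpha}w$.
\item On the right, $F_\epsilon\to F$ pointwise by dominated convergence, with dominating function $L(t-\tau)^{\widehat\alpha-\alpha-1}$. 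The $t^{-\alpha}$ term is dominated by $Ls^{\widehat\alpha-\alpha}$.
\end{itemize}
Hence $\int_0^t F_\epsilon\to\int_0^t F$, which proves the identity.

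Finally, continuity of $F$ on $[0,T]$ must be shown to conclude $v\in\CC^{(\alpha)}$. The first term satisfies $|v(t)-v(0)|t^{-\alpha}\le Lt^{\widehat\alpha-\alpha}\to0$ as $t\to0$. For the integral, substitute $\tau=t-s$ to get $\int_0^t s^{-1-\alpha}(v(t)-v(t-s))\,ds$. Continuity in $t$ then follows from uniform continuity of $v$ and dominated convergence with majorant $Ls^{\widehat\alpha-\alpha-1}$. This term is also $O(t^{\widehat\alpha-\alpha})$, so $F(t)\to0$ as $t\to0$. Setting $F(0)=0$ therefore gives a continuous function on $[0,T]$, completing the proof.
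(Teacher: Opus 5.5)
Your proof is correct. The paper gives no argument for this lemma; it is quoted from \cite[Lemma 3.1]{brunner2015}, so there is nothing in the text to match, and your proposal supplies a self-contained proof. Its structure is: the $\CC^1$ case by integration by parts; the passage to $v\in\CC^{0,\widehat\alpha}$ through the integrated identity $\II^{1-\alpha}(v-v(0))(t)=\int_0^t F(s)\,ds$, using mollifiers with a uniform H\"older constant and dominated convergence with the majorant $L(t-\tau)^{\widehat\alpha-\alpha-1}$; and the fundamental theorem of calculus once $F$ is continuous with $F(0^+)=0$.

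A direct alternative avoids approximating $v$ altogether. For $s>\varepsilon$, $\Gamma(1-\alpha)F(s)$ equals $(v(s)-v(0))\varepsilon^{-\alpha}-\alpha\int_0^{s-\varepsilon}(s-\tau)^{-1-\alpha}(v(\tau)-v(0))\,d\tau$ up to an $O(\varepsilon^{\widehat\alpha-\alpha})$ error. Integrating in $s$ and applying Fubini, the result tends to $\int_0^t(t-\tau)^{-\alpha}(v(\tau)-v(0))\,d\tau$ as $\varepsilon\to0$. That route is purely computational. Yours reuses the classical smooth-case formula and needs only one uniform bound from the H\"older seminorm, at the cost of a density argument.

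When writing it up, make three details explicit:
\begin{enumerate}
\item Extend $v$ by $v(0)$ to the left of $0$ and by $v(T)$ to the right of $T$ before mollifying, so the H\"older constant is preserved.
\item The smooth identity for $v_\epsilon$ follows from $\II^{1-\alpha}\II^{1}v_\epsilon'=\II^{1}\II^{1-\alpha}v_\epsilon'$ together with your first step.
\item Continuity of $F$ is needed already to get the formula at every $t\in(0,T]$ rather than almost everywhere, not merely for $v\in\CC^{(\alpha)}$.
\end{enumerate}
Your reading of $\partial_t^\alpha v$ as the Riemann--Liouville derivative of $v-v(0)$ is the right one and is in fact forced. \eqref{caputo_derivative} has no meaning for a merely H\"older $v$, so the lemma, and its use in Lemma~\ref{lemma:caputo_derivative_on_minima}, can only be understood in that generalized sense.
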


\begin{remark}
The converse result of Lemma \ref{lemma:caputo_derivative_form}, can also be proved. Indeed for a $\alpha\in (0,1),$ if $v\in \CC^{(\alpha)}([0,T])$ then $v\in \CC^{0,\alpha}([0,T]).$ This result can be found in \cite[Lemma 3.2]{brunner2015}.
\end{remark}

\begin{lemma}{\normalfont{\cite[Lemma 3.3]{brunner2015}}}\label{lemma:caputo_derivative_on_minima}
Assume that $v(t)\in \CC^{(\alpha)}([0,T])$ for $0<\alpha<1.$ The following statements are true
\begin{enumerate}
\renewcommand{\labelenumi}{\roman{enumi})}
\item
If there is a point $t^*\in (0,T)$ such that $v(t) - v(t^*) \leq 0$ for $t\in [0,t^*],$ then $\partial_t^\alpha v(t^*) \geq 0.$
\item
If there is a point $t^*\in (0,T)$ such that $v(t) - v(t^*)\geq 0$ for $t\in [0,t^*],$ then $\partial_t^\alpha v(t^*) \leq 0.$
\end{enumerate}
\end{lemma}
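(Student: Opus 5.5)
We prove only part i); part ii) then follows by applying i) to $-v$, since $\partial_t^\alpha$ is linear and $-v\in\CC^{(\alpha)}([0,T])$.

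The natural tool is the pointwise representation \eqref{caputo_derivative_form} of Lemma \ref{lemma:caputo_derivative_form}, evaluated at $t=t^*$:
\begin{align*}
(\partial_t^\alpha v)(t^*) = \frac{1}{\Gamma(1-\alpha)}\left( \frac{v(t^*) - v(0)}{(t^*)^\alpha} + \alpha\int_0^{t^*} (t^*-\tau)^{-1-\alpha}(v(t^*) - v(\tau))\,d\tau\right).
\end{align*}
Under the hypothesis of i), $v(t^*)-v(\tau)\geq 0$ for all $\tau\in[0,t^*]$, including $\tau=0$. Since $\Gamma(1-\alpha)>0$, $\alpha>0$ and the kernel $(t^*-\tau)^{-1-\alpha}$ is positive, the boundary term and the integrand are both nonnegative. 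This gives $\partial_t^\alpha v(t^*)\geq 0$ immediately.

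The obstacle is that Lemma \ref{lemma:caputo_derivative_form} assumes $v\in\CC^{0,\widehat\alpha}$ with $\widehat\alpha>\alpha$, while we only know $v\in\CC^{(\alpha)}$, which by the converse remark gives $v\in\CC^{0,\alpha}$. So we must justify \eqref{caputo_derivative_form} at $t^*$ directly.

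\emph{Step 1 (regularized representation).} Write $\partial_t^\alpha v=\frac{d}{dt}\II^{1-\alpha}(v-v(0))$, which is the Caputo derivative for $v\in\CC^{(\alpha)}$. Integrating by parts on $[0,t-\varepsilon]$, the quantity $\partial_t^\alpha v(t)$ equals the limit as $\varepsilon\to0$ of
\begin{align*}
\frac{1}{\Gamma(1-\alpha)}\left(\frac{v(t)-v(0)}{t^\alpha}+\alpha\int_0^{t-\varepsilon}(t-\tau)^{-1-\alpha}(v(t)-v(\tau))\,d\tau - \varepsilon^{-\alpha}(v(t)-v(t-\varepsilon))\right).
\end{align*}
\emph{Step 2 (sign at $t^*$).} At $t=t^*$ the first term is nonnegative. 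The truncated integral is nonnegative and nondecreasing as $\varepsilon\downarrow0$, so it converges in $[0,\infty]$. The remaining boundary term has the wrong sign, but it is only $O(1)$ by the $\CC^{0,\alpha}$ bound.

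\emph{Step 3 (removing the boundary term).} Since the left-hand side is finite, the monotone integral has a finite limit, and so $\varepsilon^{-\alpha}(v(t^*)-v(t^*-\varepsilon))$ converges to some $L\geq0$. This limit must be $0$: if $L>0$, then $v(t^*)-v(\tau)\geq \tfrac{L}{2}(t^*-\tau)^\alpha$ near $t^*$, and the integrand is then bounded below by a multiple of $(t^*-\tau)^{-1}$, which is not integrable, a contradiction. Hence
\begin{align*}
\partial_t^\alpha v(t^*)=\frac{1}{\Gamma(1-\alpha)}\left(\frac{v(t^*)-v(0)}{(t^*)^\alpha}+\alpha\int_0^{t^*}(t^*-\tau)^{-1-\alpha}(v(t^*)-v(\tau))\,d\tau\right)\geq 0.
\end{align*}

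The delicate point is this last step, killing the boundary term; the argument is essentially that of \cite{luchko2009} and \cite[Lemma 3.3]{brunner2015}.
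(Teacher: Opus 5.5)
Your opening paragraph is exactly the paper's proof. The paper sets $t=t^*$ in \eqref{caputo_derivative_form} and reads off the two signs, invoking Lemma~\ref{lemma:caputo_derivative_form} without checking its hypothesis. You are right that $v\in\CC^{(\alpha)}$ does not give $v\in\CC^{0,\widehat\alpha}$ with $\widehat\alpha>\alpha$. Your Steps 2--3 are a correct way to exploit the one-sided extremum: monotone convergence of the truncated integral, the $\CC^{0,\alpha}$ bound on the boundary term, and the non-integrability of $(t^*-\tau)^{-1}$ forcing $L=0$.

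The gap is Step 1. That is where the whole regularity issue sits, and Step 3 depends on it through ``the left-hand side is finite''. ``Integrating by parts on $[0,t-\varepsilon]$'' presupposes that $v'$ exists and that $\Gamma(1-\alpha)\partial_t^\alpha v(t)=\lim_{\varepsilon\to0}\int_0^{t-\varepsilon}(t-s)^{-\alpha}v'(s)\,ds$ at the given point $t$. Neither is available. $\CC^{(\alpha)}$ only yields $v-v(0)=\II^\alpha g$ with $g:=\partial_t^\alpha v\in\CC([0,T])$. Moreover, by Lemma~\ref{lemma:caputo_derivative_form} itself, $\CC^{(\alpha)}$ contains every $\CC^{0,\widehat\alpha}$ function with $\widehat\alpha\in(\alpha,1)$, e.g.\ nowhere differentiable Weierstrass-type functions. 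So, as written, you have replaced one unproved representation by a truncated one.

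The step can be repaired without using any derivative of $v$. Set $F_\varepsilon(t):=\int_0^{t-\varepsilon}(t-s)^{-\alpha}(v(s)-v(0))\,ds$. Differentiating under the integral is legitimate for continuous $v$, because the kernel is smooth on $[0,t-\varepsilon]$. Splitting $v(s)-v(0)=(v(t)-v(0))-(v(t)-v(s))$ then shows that the expression in your Step 1 equals exactly $F_\varepsilon'(t)/\Gamma(1-\alpha)$. It remains to show $F_\varepsilon'(t)\to\Gamma(1-\alpha)g(t)$. Inserting $v-v(0)=\II^\alpha g$ and using Fubini plus a Beta-integral computation gives
\begin{align*}
F_\varepsilon(t)=\Gamma(1-\alpha)\int_0^{t-\varepsilon}g(\tau)\,\psi\Bigl(\frac{t-\tau}{\varepsilon}\Bigr)\,d\tau,\qquad
\psi(\lambda):=1-\frac{1}{\Gamma(\alpha)\Gamma(1-\alpha)}\int_0^1x^{-\alpha}(\lambda-x)^{\alpha-1}\,dx,
\end{align*}
where $\psi$ is nondecreasing on $[1,\infty)$, $\psi(1)=0$, and $\psi(\lambda)\to1$ as $\lambda\to\infty$. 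Since $\psi(1)=0$, the boundary term vanishes on differentiation, and
\begin{align*}
F_\varepsilon'(t)=\Gamma(1-\alpha)\int_1^{t/\varepsilon}g(t-\varepsilon\lambda)\,\psi'(\lambda)\,d\lambda\;\longrightarrow\;\Gamma(1-\alpha)\,g(t)
\end{align*}
by dominated convergence, because $\psi'\geq0$ and $\int_1^\infty\psi'(\lambda)\,d\lambda=1$. With this in place, your Steps 2--3 go through verbatim. The result is a complete proof under the stated hypothesis $v\in\CC^{(\alpha)}$, which goes beyond what the paper's two-line argument justifies.
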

\begin{proof}
This lemma was first proved in \cite[Theorem 1]{luchko2010}, under stronger regularity assumption to $v,$ namely $v\in W^{1}_t((0,T])\cap \CC([0,T]),$ where $W^{1}_t((0,T])$ denotes the space of functions $v\in \CC^1((0,T])$ with $v^\prime\in L^1((0,T)).$ Subsequently, the authors of \cite[Lemma 3.3]{brunner2015} weakened these regularity assumptions to $v(t)\in \CC^{(\alpha)}([0,T]).$ Their proof is a direct application of Lemma \ref{lemma:caputo_derivative_form}. Indeed, for (i), setting $t=t^*$ into \eqref{caputo_derivative_form}, we get that both terms are non-negative due to the hypothesis of (i). Similarly, for (ii).
\end{proof}

\begin{theorem}\label{theorem:DMP_semi_discrete}
We assume that the correction factors in \eqref{semi_fem_u_2D_afc} and \eqref{semi_fem_u_2D_afc_linear} are defined through \eqref{led_2D_weakened} and Algorithm \ref{algorithm-1}. Then, the nonlinear stabilized semi--discrete scheme \eqref{semi_fem_u_2D_afc} satisfies the global DMP.
\end{theorem}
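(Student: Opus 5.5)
We argue by contradiction, following the standard Luchko argument adapted to the algebraic form \eqref{Semi_discrete_matrix}. We treat the case $G\le 0$ and the bound \eqref{local_semi_dmp1}; the lower bound \eqref{local_semi_dmp2} follows by applying the same argument to $-\be$, since the scheme is odd-symmetric ($\mathsf f_{ij}(-\be)=-\mathsf f_{ij}(\be)$ and the factors from Algorithm \ref{algorithm-1} are invariant under this sign change). We assume $\be(t)$ is H\"older continuous in time, so that Lemma \ref{lemma:caputo_derivative_on_minima} applies componentwise; this is plausible from the well-posedness in Theorem \ref{theorem:fixed_point_semi_discrete} and the Volterra structure of the scheme.

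\textbf{Reduction to a strict inequality.} To avoid degenerate equalities, introduce for $\varepsilon>0$ the shifted quantity $\be^\varepsilon(t):=\be(t)-\varepsilon\,(1+t^\alpha)\mathbf 1$. Since $\partial_t^\alpha t^\alpha=\Gamma(1+\alpha)>0$, it suffices to show $\beta^\varepsilon_i(t)\le \beta^{\max,+}(0)$ for all $i,t$ and then let $\varepsilon\to0$. If $\sigma=0$ and only $\beta^{\max}(0)$ is the bound, the same shift works, because the row sums of $\mu\S+\Q+\D$ over all nodes vanish. Suppose the claim fails. By continuity there is a first time $t^*>0$ and an interior index $i$ with $\beta^\varepsilon_i(t^*)=\max_{j,\,s\le t^*}\beta_j^\varepsilon(s)>\beta^{\max,+}(0)\ge 0$. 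At $t^*$ the value $\beta_i$ is a maximum over all nodes, boundary nodes (value $0$) included, and $\beta_i(s)\le\beta_i(t^*)$ for $s\le t^*$ up to the $\varepsilon$-shift. Lemma \ref{lemma:caputo_derivative_on_minima}(i), applied to $\beta_i^\varepsilon$, then gives $\partial_t^\alpha\beta_i(t^*)\ge \varepsilon\Gamma(1+\alpha)>0$.

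\textbf{Sign of the spatial terms.} Write row $i$ of \eqref{Semi_discrete_matrix} as
\[
m_i\partial_t^\alpha\beta_i+\sum_{j\ne i}a_{ij}(\beta_j-\beta_i)\,\cdot(-1)+\Big(\sum_j a_{ij}\Big)\beta_i+\sigma m_i\beta_i=r_i+\sum_{j\ne i}\mathfrak a_{ij}\mathsf f_{ij},
\]
where $A=(a_{ij})=\mu\S+\Q+\D$. Remark \ref{remark:optimal2} gives $a_{ij}\le0$ for $j\ne i$ (the sign convention should be checked carefully here). Since $\ddiv\bfb=0$, the full row sums of $\S$, $\Q$ and $\D$ over all nodes $\Nh$ vanish. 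Hence $(A\be)_i=\sum_{j\ne i}(-a_{ij})(\beta_i-\beta_j)\ge0$ at the maximum, because boundary nodes have $\beta_j=0\le\beta_i$. Also $\sigma m_i\beta_i\ge0$ and $r_i=(G,\phi_i)\le0$. For the flux term, \eqref{led_2D_weakened} gives $\sum_{j}\mathfrak a_{ij}\mathsf f_{ij}\le q_i^+(\beta_i^{\max}-\beta_i)=0$, since $\beta_i$ is the patch maximum. Together these yield $m_i\partial_t^\alpha\beta_i(t^*)\le0$.

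\textbf{Contradiction and main obstacle.} The $\varepsilon$-shift leaves the extra terms $\varepsilon(1+t^\alpha)\big((A\mathbf 1)_i+\sigma m_i\big)$. These terms are nonnegative and so do not spoil the inequality, but the fact that the flux constraint is applied to $\be$ rather than $\be^\varepsilon$ must be checked: the shift is spatially constant, so patch extrema, and hence the statement that $\beta_i$ is a local maximum, are unaffected. Combining the last two steps gives $0<m_i\varepsilon\Gamma(1+\alpha)\le 0$, a contradiction.

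The delicate step I expect is the time regularity: placing each $\beta_i$ in $\CC^{(\alpha)}$, or $\CC^{0,\widehat\alpha}$ via Lemma \ref{lemma:caputo_derivative_form}, near $t=0$, where the solution has an initial layer. A related issue is ensuring that the first violation time is interior to $(0,T)$; if $t^*=T$, I would extend the problem slightly past $T$. I would first try to derive the H\"older continuity from the integral form $\be=\be(0)+\II^\alpha(\cdots)$ with bounded integrand, using the Lipschitz bound of Lemma \ref{lemma:stability_widehat_D_difference} on the nonlinearity.
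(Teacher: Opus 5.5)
Your argument is correct. It is not the paper's argument but a more careful version of it. The paper works with $\boldsymbol{\beta}$ directly: it takes $t^*$ as the first time a nodal value reaches the bound and gets $\partial_t^\alpha\beta_i(t^*)\le 0$ from Lemma \ref{lemma:caputo_derivative_on_minima}(ii). From the signs of the limited off-diagonal coefficients $\mu s_{ij}+\tau_{ij}+d_{ij}(1-\mathfrak a_{ij})$ and of the source it gets $\partial_t^\alpha\beta_i(t^*)+\sigma\beta_i(t^*)\ge 0$, and it closes with $\sigma\beta_i(t^*)=\sigma\beta^{\min,-}(0)<0$.

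That last step needs both $\sigma>0$ and a strictly negative bound $\beta^{\min,-}(0)<0$. If $\beta^{\min,-}(0)=0$ (e.g.\ nonnegative initial data) or $\sigma=0$, the two inequalities are compatible with $\partial_t^\alpha\beta_i(t^*)=0$ and no contradiction arises. So the cases the paper calls analogous are not really covered.

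Your shift by $\varepsilon(1+t^\alpha)$ (Luchko's device) puts the strictness into the time derivative, $\partial_t^\alpha\beta_i(t^*)\ge\varepsilon\Gamma(1+\alpha)>0$. You then need only the non-strict signs of the spatial, reaction, source and flux terms, uniformly in $\sigma\ge 0$ and in the value of the bound. Choosing $(i,t^*)$ as a space--time running maximum also makes explicit that $\beta_i(t^*)$ is at once a temporal running maximum and a patch maximum. The latter is what activates \eqref{led_2D_weakened}, and the paper merely asserts it. The only price is the limit $\varepsilon\to 0$.

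Two of your worries are lighter than you expect.
\begin{itemize}
\item Time regularity: $m_i\partial_t^\alpha\beta_i=F_i(t,\boldsymbol{\beta}(t))$, with $F$ continuous in $t$ (for $G$ continuous in time) and continuous, indeed Lipschitz, in $\boldsymbol{\beta}$ for fixed $h$ (Lemma \ref{lemma:stability_widehat_D_difference}). Hence $\partial_t^\alpha\beta_i$ is continuous on $[0,T]$ and $\beta_i\in\CC^{(\alpha)}([0,T])$, which is all Lemma \ref{lemma:caputo_derivative_on_minima} requires; the initial layer plays no role.
\item The case $t^*=T$ is avoided by continuity. If the shifted bound fails at $T$, it already fails at some $t_1<T$, and you take the running maximum on $[0,t_1]$.
\end{itemize}

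Three smaller remarks. The intended definition is $d_{ij}=-\max\{\tau_{ij},0,\tau_{ji}\}$ for $j\ne i$. This gives $\tau_{ij}+d_{ij}\le 0$ and, with $s_{ij}\le 0$, the nonpositive off-diagonal entries you use. The zero row sums of the stiffness and convection matrices come from $\sum_j\phi_j\equiv 1$, not from the divergence-free condition. Finally, $\beta^{\max}(0)$ is a maximum over all of $\Nh$, Dirichlet nodes included, so it is automatically nonnegative; the $\sigma=0$ bound therefore coincides with $\beta^{\max,+}(0)$ and needs no separate argument.
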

\begin{proof}
Recall the nonlinear system of ODEs \eqref{Semi_discrete_matrix}, which according to Theorem \ref{theorem:fixed_point_semi_discrete} has a unique solution. Let $i\in\Nh^0,$ then, the $i-$th equation can be written as
\begin{align*}
m_i \partial_t^\alpha\beta_i(t) + \sum_{j=1}^{\N}\left(\mu s_{ij} + \tau_{ij} + d_{ij} \right)\beta_j(t) + \sigma m_i \beta_i(t) = r_i(G(t)) + \sum_{j\neq i}d_{ij}\mathfrak{a}_{ij}(\beta(t))(\beta_j(t) - \beta_i(t)),\;\;\;0\leq t\leq T.
\end{align*}
By using the zero row sum property of matrices $\S,\,\Q$ and $\D,$ we rearrange the terms, to get
\begin{align}\label{semi_dmp_eq2}
m_i \partial_t^\alpha\beta_i(t) + \sum_{j\neq i}\left(\mu s_{ij} + \tau_{ij} + d_{ij}(1-\mathfrak{a}_{ij}(\be(t)) \right)(\beta_j(t) - \beta_i(t)) + \sigma m_i \beta_i(t) = r_i(G(t)),\;\;\;0\leq t\leq T.
\end{align}
We prove only the \eqref{local_semi_dmp2}, as the remaining cases can be treated analogously. We first note that \eqref{local_semi_dmp2} holds trivially for $t=0.$

To show \eqref{local_semi_dmp2} it is sufficient to assume that at node $i,$ the local minimum of $\beta(t)$ on $\omega_i$ is attained, i.e., we assume $\beta_i(t) := \min_{j\in\Zh^i\cup \{i\}}\beta_j(t).$ Let that \eqref{local_semi_dmp2} fails before $T,$ i.e., there exists $t^* = \inf\{t\,:\, \beta_i(t) - \beta^{\min,-}(0) < 0\}$ with $t^*\in (0,T).$
By definition of $\beta_i,$ we have that $\beta_i(t^*) \leq \beta_j(t^*),\,j\in\Zh^i.$

Then, since the correction factors are computed according to Algorithm \ref{algorithm-1}, we obtain that $\mu s_{ij} + \tau_{ij} + d_{ij}(1-\mathfrak{a}_{ij}(\be(t^*)))\leq 0$ for $j\neq i.$ We note that $s_{ij}\geq 0,\,j\neq i$ in view of Remark \ref{remark:s_elements} and then, using similar arguments to the proof of \cite[Theorem 10.35]{barrenechea2025}, we conclude to the non--positivity of these elements. In addition, since the sum on the left hand side is non--positive and $r_i(G(t))\geq 0,\,t\in[0,T],$ by hypothesis, we conclude that
\begin{align}\label{semi_dmp_eq1}
\partial_t^\alpha\beta_i(t^*) + \sigma\beta_i(t^*) \geq 0.
\end{align}
On the other hand, due to above hypothesis we have that $\beta_i(t) \geq \beta^{\min,-}(0)$ for $t\in[0,t^*]$ and due to the continuity at $t^*,$ we get $\beta^{\min,-}(0) = \beta_i(t^*),$ hence $\beta_i(t) \geq \beta_i(t^*),\,t\in[0,t^*].$ According to Lemma \ref{lemma:caputo_derivative_on_minima} (ii), we get that $\partial_t^\alpha\beta_i(t^*)\leq 0$ and thus
\begin{align*}
0 > \sigma\beta_i^{\min,-}(0) = \sigma\beta_i(t^*) \geq \partial_t^\alpha\beta_i(t^*) + \sigma\beta_i(t^*) \geq 0,
\end{align*}
which contradicts to our hypothesis, and therefore \eqref{local_semi_dmp2} is true for all $t\in[0,T].$
\end{proof}

\section{Error analysis}\label{section:error_analysis}
In this section, is devoted to derive error bounds using energy arguments for the stabilized semi--discrete scheme \eqref{semi_fem_u_2D_afc} in $L^2$ norm, in case of nonsmooth initial data $u_0.$ To achieve this, we follow arguments presented in \cite{mustapha2018,mahata2022}. To perform the error analysis, we recall the elliptic projection $R_h\,:H^1_0\to\Sh$ defined for $v\in H^1_0$ as
\begin{align*}
\calB(R_hv,\chi) = \calB(v,\chi),\;\;\forall\,\chi\in\Sh.
\end{align*}
In what follows, we split as usual the error for \eqref{semi_fem_u_2D_afc_linear}, by intermediate the elliptic projection as 
\begin{align*}
u_h(t)  - u(t) = (u_h(t)  - R_hu(t)) - (u(t)-R_hu(t)) =: \theta(t) - \rho(t)
\end{align*}
The discrete error contribution $\theta(t)\in\Sh,$ satisfies the following relation for $0<t\leq T,$ and for all $\chi\in\Sh,$
\begin{align}\label{error_eq_1}
(\II^{1-\alpha}\theta^\prime(t), \chi)_h + \calB_h(\theta(t), \chi) = - (\II^{1-\alpha}\rho^\prime(t), \chi) + \widehat d_{\D,h}(u_h(t);u_h(t),\chi). 
\end{align}
Then, for the solution of \eqref{conv_diff}, under the regularity assumptions \eqref{regularity} and the bounds derived in \cite[Chapter 8]{brenner2008} and \cite{thomee2006}, the elliptic projection $R_h$ satisfy the following bounds for $0\leq \delta \leq s$ with $s=1,2,$
\begin{align}
\|\rho(t)\|_{L^{2}} + h\|\rho(t)\|_{1} & \le Ch^s t^{-\alpha(s-\delta)/2}\|u_0\|_{\dot H^\delta},\label{ritz_projection_est2_2D}\\
\|\rho^\prime(t)\|_{L^{2}} + h\|\rho^\prime(t)\|_{1} & \le Ch^s t^{-\alpha(s-\delta)/2-1}\|u_0\|_{\dot H^\delta}.\label{ritz_projection_est2_2D_2}
\end{align}
We introduce the following notation
\begin{align*}
\widehat \varphi(t) & := \II \varphi(t) = \int_0^t\varphi(s)\,ds,\\
\dwidehat{\varphi}(t) & := \II^2 \varphi(t) = \int_0^t\int_0^s\varphi(\tau)\,d\tau\,ds.
\end{align*}
As a direct result, we can estimate for latter reference the following quantities.
\begin{equation}\label{rho_integral_estimate}
\begin{aligned}
\|\II^{1-\alpha}\rho(t)\|_{L^2} + \|\II^{1-\alpha}\rho_1^\prime(t)\|_{L^2} & \leq C_\alpha \int_0^t (t-\tau)^{-\alpha}\left( \|\rho(\tau)\|_{L^2} + \tau \|\rho'(\tau)\|_{L^2}\right)\,d\tau\\
& \leq C_\alpha h^s\int_0^t(t-\tau)^{-\alpha}\tau^{-\alpha(s-\delta)/2}\|u_0\|_{\dot H^\delta}\,d\tau\\
& \leq C_\alpha h^st^{1-\alpha-\alpha(s-\delta)/2}\|u_0\|_{\dot H^\delta},\;\;\text{for}\;\;0\leq \delta\leq s,\;\;s=0,1.
\end{aligned}
\end{equation} 
Similarly, 
\begin{equation}\label{rho_integral_estimate2}
\begin{aligned}
\|\II^{1-\alpha}\rho_1(t)\|_{L^2} & \leq C_\alpha \int_0^t (t-\tau)^{-\alpha} \tau \|\rho(\tau)\|_{L^2}\,d\tau\\
& \leq C_\alpha h^2\int_0^t(t-\tau)^{-\alpha}\tau^{1-\alpha(s-\delta)/2}\|u_0\|_{\dot H^\delta}\,d\tau\\
& \leq C_\alpha h^2t^{2-\alpha-\alpha(2-\delta)/2}\|u_0\|_{\dot H^\delta},\;\;\text{for}\;\;0\leq \delta\leq 2.
\end{aligned}
\end{equation} 
In addition,
\begin{equation}\label{rho_integral_estimate3}
\begin{aligned}
\|\II^{1-\alpha}\widehat \rho(t)\|_{L^2} = \|\II^{2-\alpha} \rho(t)\|_{L^2} & \leq  C_\alpha \int_0^t (t-\tau)^{1-\alpha} \|\rho(\tau)\|_{L^2}\,d\tau\\
& \leq C_\alpha h^s\int_0^t(t-\tau)^{1-\alpha}\tau^{-\alpha(s-\delta)/2}\|u_0\|_{\dot H^\delta}\,d\tau\\
& \leq C_\alpha h^st^{2-\alpha-\alpha(s-\delta)/2}\|u_0\|_{\dot H^\delta},\;\;\text{for}\;\;0\leq \delta\leq s,\;\;s=0,1.
\end{aligned}
\end{equation} 
Further, we can estimate in a similar manner, the
\begin{align}\label{rho_integral_estimate4}
\begin{aligned}
& \int_0^t \left(  (\II^{1-\alpha}\rho_1(\tau), \rho_1(\tau))_h + (\II^{1-\alpha}\widehat \rho(\tau), \widehat \rho(\tau))_h\right)\,d\tau\\
&\qquad \leq \int_0^t \left( \|\II^{1-\alpha}\rho_1(\tau)\|_h + \|\II^{1-\alpha}\widehat \rho(\tau)\|_h\right) \left(\|\rho_1(\tau)\|_h + \|\widehat\rho(\tau)\|_h\right)\,d\tau\\
& \qquad \leq Ch^{4}\int_0^t s^{2-\alpha-\alpha(2-\delta)/2}s^{1-\alpha(2-\delta)/2}\|u_0\|_{\dot H^\delta}^2\,ds \leq C_{\alpha}h^{4}t^{4-\alpha-\alpha(2-\delta)}\|u_0\|_{\dot H^\delta}^2.
\end{aligned}
\end{align}

\begin{lemma}\label{lemma:auxiliary_identity1}
We assume that the correction factors in \eqref{semi_fem_u_2D_afc_linear} are defined through Definition \ref{definition:correction_factors_AFC}. Then, 
\begin{equation}\label{error_eq_4}
\begin{aligned}
& \int_0^t (\II^{1-\alpha}\widehat \theta(\tau), \widehat \theta(\tau))_h\,d\tau + \tribar\dwidehat \theta(t)\tribar^2 \\
& \qquad \leq C_{\alpha,\mu}\left( \int_0^t (\II^{1-\alpha}\widehat \rho(\tau), \widehat \rho(\tau))_h\,d\tau + t^{2}\int_0^t\|\theta(\tau)\|_{L^2}^2\,d\tau + h^4\,t^{3-\alpha(2-\delta)}\|u_0\|_{\dot H^\delta}^2\right).
\end{aligned}
\end{equation}
\end{lemma}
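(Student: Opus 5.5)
Integrate the error equation \eqref{error_eq_1} twice in time and then test it. This is the standard energy argument of \cite{mustapha2018,mahata2022} for nonsmooth data, with the additional task of handling the nonlinear stabilization term.

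\emph{Step 1: integrated error equation.} Since $\II^{1-\alpha}\theta'$ integrated once gives $\II^{1-\alpha}\theta - \omega_{2-\alpha}(t)\theta(0)$, integrating \eqref{error_eq_1} over $(0,t)$ yields
\begin{align*}
(\II^{1-\alpha}\theta(t),\chi)_h + \calB_h(\widehat\theta(t),\chi) = -(\II^{1-\alpha}\rho(t),\chi) + \omega_{2-\alpha}(t)(\theta(0)+\rho(0),\chi) + \int_0^t \widehat d_{\D,h}(u_h;u_h,\chi)\,d\tau .
\end{align*}
Take $u_h^0=P_hu_0$, where $P_h$ is the $L^2$-projection. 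Then $\theta(0)+\rho(0)=P_hu_0-u_0$, which is $L^2$-orthogonal to $\Sh$, so this term vanishes up to the discrepancy between $(\cdot,\cdot)_h$ and $(\cdot,\cdot)$. Equivalently, one may choose $u_h^0$ so that it cancels exactly. Integrating once more gives the same identity with $\theta,\rho$ replaced by $\widehat\theta,\widehat\rho$ and $\widehat\theta$ replaced by $\dwidehat\theta$ in $\calB_h$; the right-hand side then contains $\II^2$ of the stabilization term.

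\emph{Step 2: testing.} Test with $\chi=\widehat\theta=\dwidehat\theta{}'$ and integrate over $(0,t)$. The symmetric part of $\calB_h$ gives $\tfrac12\tribar\dwidehat\theta(t)\tribar_h^2$. The convection part is skew, since $\ddiv\bfb=0$, and drops out. The $\rho$ term is handled by \eqref{derivative_cont}: it is bounded by $\epsilon\int_0^t(\II^{1-\alpha}\widehat\theta,\widehat\theta)_h + C_\alpha\int_0^t(\II^{1-\alpha}\widehat\rho,\widehat\rho)_h$, using the norm equivalence \eqref{mass_lump_equivalence} to pass between $(\cdot,\cdot)$ and $(\cdot,\cdot)_h$. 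The $\epsilon$-term is absorbed on the left, where positivity comes from \eqref{coercivity}.

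\emph{Step 3: the stabilization term (main obstacle).} The remaining term is $\int_0^t\int_0^s \widehat d_{\D,h}(u_h;u_h,\widehat\theta(s))\,d\tau\,ds$, which must be bounded without differentiating the nonlinear form in time. First integrate by parts in $s$, using $\widehat\theta=\dwidehat\theta{}'$ and linearity in the last argument. This moves everything onto $\dwidehat\theta$: a boundary term $\int_0^t \widehat d_{\D,h}(u_h;u_h,\dwidehat\theta(t))\,d\tau$ and an integral term $-\int_0^t \widehat d_{\D,h}(u_h(s);u_h(s),\dwidehat\theta(s))\,ds$. Each is estimated by Lemma \ref{lemma:stability_widehat_D_main} with $\psi=u_h$:
\begin{align*}
|\widehat d_{\D,h}(u_h;u_h,\chi)|\le Ch\bigl(\|\nabla\theta\|_{L^2}+\|\nabla\rho\|_{L^2}+h\tau^{-\alpha(2-\delta)/2}\|u_0\|_{\dot H^\delta}\bigr)\|\nabla\chi\|_{L^2}.
\end{align*}
By \eqref{ritz_projection_est2_2D}, $h\|\nabla\rho\|_{L^2}$ contributes $h^2\tau^{-\alpha(2-\delta)/2}$. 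The factor $h\|\nabla\theta\|_{L^2}$ is converted via the inverse estimate \eqref{eq:inverse_estimate} into $C\|\theta\|_{L^2}$, which explains why $\int_0^t\|\theta\|^2$ appears on the right of \eqref{error_eq_4}. For the boundary term, Cauchy--Schwarz in time gives $t^{1/2}(\int_0^t\|\theta\|^2)^{1/2}\|\nabla\dwidehat\theta(t)\|$, and Young's inequality absorbs $\tfrac14\mu\|\nabla\dwidehat\theta(t)\|^2$ into the left, leaving $C_\mu t\int_0^t\|\theta\|^2$. The data part is $\int_0^t h^2\tau^{-\alpha(2-\delta)/2}d\tau \sim h^2 t^{1-\alpha(2-\delta)/2}$, and its square produces $h^4t^{2-\alpha(2-\delta)}$.

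The integral term produces $\int_0^t\|\nabla\dwidehat\theta(s)\|^2ds$, which is handled by a Gronwall argument. Alternatively, bound $\sup_{s\le t}\tribar\dwidehat\theta(s)\tribar$ and absorb it. Either way this yields the extra factors of $t$, namely $t^2\int_0^t\|\theta\|^2$ and $h^4t^{3-\alpha(2-\delta)}$ with constants depending on $T$, and $\mu^{-1}$ enters through $\|\nabla v\|\le\mu^{-1/2}\tribar v\tribar$. I expect the bookkeeping of powers of $t$ in this step, and keeping Gronwall constants $t$-uniform, to be the delicate part. Finally, use \eqref{h_norm_equivalence} to replace $\tribar\cdot\tribar_h$ by $\tribar\cdot\tribar$.
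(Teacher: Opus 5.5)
Your strategy is the paper's: integrate \eqref{error_eq_1} twice, test with $\widehat\theta$, handle the $\rho$-term by \eqref{derivative_cont}, integrate the stabilization term by parts in time, bound it with Lemma~\ref{lemma:stability_widehat_D_main} and the inverse estimate, and close with Gronwall. The gap is in Step~3. There you drop one of the two time integrations, and it is exactly that integration which produces the powers of $t$ in \eqref{error_eq_4}.

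As you say in Step~1, the twice-integrated equation carries $\II^2$ of the stabilization term. So after testing with $\widehat\theta$ the term to be estimated is
\begin{align*}
\int_0^t\int_0^s (s-\tau)\,\widehat d_{\D,h}(u_h(\tau);u_h(\tau),\widehat\theta(s))\,d\tau\,ds,
\end{align*}
not $\int_0^t\int_0^s \widehat d_{\D,h}(u_h;u_h,\widehat\theta(s))\,d\tau\,ds$. With your single inner integral, the boundary term only gives $t\int_0^t\|\theta\|_{L^2}^2\,d\tau$ and $h^4t^{2-\alpha(2-\delta)}$. The integral term only gives $\int_0^t\|\theta\|_{L^2}^2\,d\tau$ and $h^4\int_0^t s^{-\alpha(2-\delta)}\,ds$, which is infinite unless $\alpha(2-\delta)<1$, and the lemma does not assume that. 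These bounds cannot be turned into $t^2\int_0^t\|\theta\|_{L^2}^2\,d\tau$ and $h^4t^{3-\alpha(2-\delta)}$. Gronwall contributes only a factor $e^{Ct}$, and $T$-dependent constants let you lower powers of $t$, not raise them.

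The fix is to integrate by parts with $F(s)=\int_0^s(s-\tau)g(\tau)\,d\tau$ and $F'(s)=\int_0^s g(\tau)\,d\tau$, as the paper does in $J_1,J_2$.
\begin{itemize}
\item The boundary term becomes $\int_0^t(t-\tau)\,\widehat d_{\D,h}(u_h(\tau);u_h(\tau),\dwidehat\theta(t))\,d\tau$. It is bounded by $Ch\|\nabla\dwidehat\theta(t)\|_{L^2}\bigl(t^{3/2}(\int_0^t\|\nabla\theta\|_{L^2}^2\,d\tau)^{1/2}+ht^{2-\alpha(2-\delta)/2}\|u_0\|_{\dot H^\delta}\bigr)$.
\item The integral term becomes $-\int_0^t\int_0^s\widehat d_{\D,h}(u_h(\tau);u_h(\tau),\dwidehat\theta(s))\,d\tau\,ds$. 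For each $s$ its inner integral is bounded by $Ch\|\nabla\dwidehat\theta(s)\|_{L^2}\bigl(s^{1/2}(\int_0^s\|\nabla\theta\|_{L^2}^2\,d\tau)^{1/2}+hs^{1-\alpha(2-\delta)/2}\|u_0\|_{\dot H^\delta}\bigr)$, and the $\tau$-integral here always converges.
\item Young's inequality and the inverse estimate then give $\int_0^t\tribar\dwidehat\theta\tribar^2\,ds$, which goes to Gronwall, plus $C_\mu\bigl(t^2\int_0^t\|\theta\|_{L^2}^2\,d\tau+h^4t^{3-\alpha(2-\delta)}\|u_0\|_{\dot H^\delta}^2\bigr)$, with no restriction on $\alpha$.
\end{itemize}
This is where the right-hand side of \eqref{error_eq_4} comes from.

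Separately, the convection term does not drop out. Skew-symmetry gives $(\bfb\cdot\nabla w,w)=0$. But here the term is $\int_0^t(\bfb\cdot\nabla\dwidehat\theta,\widehat\theta)\,d\tau$ with $\widehat\theta=\frac{d}{d\tau}\dwidehat\theta$, i.e.\ a skew form evaluated at $(w,w')$. That is not zero in general, and integrating by parts in time only reproduces the same expression. The paper's proof makes the same assertion.

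It is easily repaired within the stated bound. Use $|(\bfb\cdot\nabla\dwidehat\theta,\widehat\theta)|\le\|\bfb\|_{L^\infty}\mu^{-1/2}\tribar\dwidehat\theta\tribar\,\|\widehat\theta\|_{L^2}$ and $\int_0^t\|\widehat\theta\|_{L^2}^2\,d\tau\le\frac{t^2}{2}\int_0^t\|\theta\|_{L^2}^2\,d\tau$. The term then contributes a Gronwall term $\int_0^t\tribar\dwidehat\theta\tribar^2\,d\tau$ plus $C_\mu t^2\int_0^t\|\theta\|_{L^2}^2\,d\tau$.
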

\begin{proof}
We integrate \eqref{error_eq_1} with respect to time, to get
\begin{equation}\label{error_eq_42}
\begin{aligned}
(\II^{1-\alpha}\theta(t), \chi)_h + \calB_h(\widehat \theta(t), \chi) = - (\II^{1-\alpha}\rho(t), \chi)_h + \int_0^t\widehat d_{\D,h}(u_h(\tau);u_h(\tau),\chi)\,d\tau,
\end{aligned}
\end{equation}
where we have used the identity $\II^{2-\alpha}\theta^\prime(t) = \II^{1-\alpha}\theta(t) - \omega_{2-\alpha}(t)\theta(0)$ with $\theta(0)=0.$ Also, the stabilization term on the right hand side, can be written as
\begin{align*}
\int_0^t\widehat d_{\D,h}(u_h(\tau);u_h(\tau),\chi)\,d\tau  = \sum_{i<j}(\chi_i-\chi_j)\int_0^t\varrho_{ij}(u_h(\tau))\,d\tau,
\end{align*}
where for any $\psi\in\Sh,$ we define $\varrho_{ij}(\psi) := d_{ij}(1 - \mathfrak{a}_{ij}(\psi))(\psi_i - \psi_j),\,i,j=1,\ldots,\N.$

After an integration with respect to time, we choose $\chi=\widehat\theta(t)\in\Sh,$ to get
\begin{align}\label{error_eq_3}
(\II^{1-\alpha}\widehat \theta(t), \widehat \theta(t))_h + \calB_h(\dwidehat \theta(t), \widehat \theta(t)) = - (\II^{1-\alpha}\widehat \rho(t), \widehat \theta(t))_h +  \sum_{i<j}(\widehat \theta_i(t)-\widehat \theta_j(t))\int_0^t\int_0^\tau\varrho_{ij}(u_h(s))\,ds\,d\tau.
\end{align}
We integrate the above again we respect to time, to get
\begin{equation}\label{error_eq_32}
\begin{aligned}
& \int_0^t(\II^{1-\alpha}\widehat \theta(\tau), \widehat \theta(\tau))_h\,d\tau + \int_0^t\frac{d}{d\tau}\tribar \dwidehat \theta(\tau) \tribar^2\,d\tau = - \int_0^t (\bfb \cdot \nabla \dwidehat \theta(\tau), \widehat \theta(\tau))\,d\tau\\
& \qquad  -  \int_0^t((\II^{1-\alpha}\widehat \rho(\tau), \widehat \theta(\tau))_h\,d\tau +  \sum_{i<j}\int_0^t(\widehat \theta_i(\tau)-\widehat \theta_j(\tau))\int_0^\tau\int_0^s\varrho_{ij}(u_h(\zeta))\,d\zeta\,ds\,d\tau.
\end{aligned}
\end{equation}
The first term on the right hand side is vanished, due to the fact that $\ddiv \bfb=0$ and the zero Dirichlet boundary conditions. In addition, the last term on the right hand side term, can be written as
\begin{align*}
J(t) & := \sum_{i<j}\int_0^t(\widehat \theta_i(\tau)-\widehat \theta_j(\tau))\int_0^\tau\int_0^s\varrho_{ij}(u_h(\zeta))\,d\zeta\,ds\,d\tau\\
& = \sum_{i<j}\int_0^t\left(\frac{d}{d\tau}(\dwidehat \theta_i(\tau)-\dwidehat \theta_j(\tau))\int_0^\tau(\tau-\zeta)\varrho_{ij}(u_h(\zeta))\,d\zeta\right)d\tau,
\end{align*}
since  $\frac{d}{dt}\dwidehat \theta(t) = \widehat\theta(t).$ 
Thus, using integration by parts, the above term becomes
\begin{align*}
J(t) & = \sum_{i<j}(\dwidehat \theta_i(t) - \dwidehat \theta_j(t))\int_0^t(t-\zeta)\varrho_{ij}(u_h(\zeta))\,d\zeta - \sum_{i<j}\int_0^t(\dwidehat \theta_i(\tau) - \dwidehat \theta_j(\tau))\int_0^\tau\varrho_{ij}(u_h(s))\,ds = J_1(t) + J_2(t).
\end{align*}
Now, we need to estimate the $J_1(t),\,J_2(t).$ We get in view of Lemma \ref{lemma:stability_widehat_D_main}, \eqref{ritz_projection_est2_2D} and \eqref{regularity},
\begin{align*}
J_1(t) & = \int_0^t (t-\zeta)\,\widehat d_{\D,h}(u_h(\zeta);u_h(\zeta),\dwidehat \theta(t))\,d\zeta\\
& \leq Ch\|\nabla \dwidehat \theta(t)\|_{L^2} \int_0^t (t-\zeta)\,(\|\nabla \theta(\zeta)\|_{L^2} + \|\nabla \rho(\zeta)\|_{L^2} + h\zeta^{-\alpha(2-\delta)/2}\|u_0\|_{\dot H^\delta})\,d\zeta\\
& \leq Ch\|\nabla \dwidehat \theta(t)\|_{L^2} \int_0^t (t-\zeta)\,(\|\nabla \theta(\zeta)\|_{L^2} + h\zeta^{-\alpha(2-\delta)/2}\|u_0\|_{\dot H^\delta})\,d\zeta\\
& \leq C_{\alpha,\mu}h\tribar \dwidehat \theta(t)\tribar  \left(t^{3/2}\left(\int_0^t\|\nabla \theta(\tau)\|_{L^2}^2\,d\tau\right)^{1/2} + h\,t^{2-\alpha(2-\delta)/2}\|u_0\|_{\dot H^\delta} \right).
\end{align*}
For the second term, we have also in view of Lemma \ref{lemma:stability_widehat_D_main},
\begin{align*}
J_1^2 & = \int_0^t\int_0^\tau \widehat d_{\D,h}(u_h(s);u_h(s),\dwidehat \theta(\tau))\,ds\,d\tau\\
& \leq Ch\int_0^t\|\nabla \dwidehat \theta(\tau)\|_{L^2}\left( \int_0^\tau(\|\nabla (u_h(s) - u(s))\|_{L^2} + hs^{-\alpha(2-\delta)/2}\|u_0\|_{\dot H^\delta})\,ds\right) d\tau\\
& = C_{\mu}h \left(\int_0^t\tribar \dwidehat \theta(\tau)\tribar\,d\tau\right) \left(\int_0^t(\|\nabla (u_h(\tau) - u(\tau))\|_{L^2} + hs^{-\alpha(2-\delta)/2}\|u_0\|_{\dot H^\delta})\,d\tau \right)\\
& \leq C_{\mu,\alpha}h t^{1/2}\left(\int_0^t\tribar \dwidehat \theta(\tau)\tribar^2\,d\tau\right)^{1/2}  \left(t^{1/2}\left(\int_0^t(\|\nabla \theta(\tau)\|_{L^2}^2\,d\tau\right)^{1/2} + ht^{1-\alpha(2-\delta)/2}\|u_0\|_{\dot H^\delta} \right),
\end{align*}
where we have assume that $1-\alpha(2-\delta)/2 > 0,$ which is true for $\delta\in[0,2]$ and $\alpha\in(0,1).$
Inserting the above estimates to the error equation \eqref{error_eq_1} and using \eqref{derivative_cont}, we get
\begin{align*}
& \int_0^t (\II^{1-\alpha}\widehat \theta(\tau), \widehat \theta(\tau))_h\,d\tau + \tribar\dwidehat \theta(t)\tribar^2 \leq C\int_0^t\tribar\dwidehat \theta(\tau)\tribar^2\,d\tau\\
& \qquad + \int_0^t (\II^{1-\alpha}\widehat \rho(\tau), \widehat \rho(\tau))_h\,d\tau + C_{\alpha,\mu}h^2 t^{2}\int_0^t\|\nabla \theta(\tau)\|_{L^2}^2\,d\tau + C_{\alpha,\mu}h^4\,t^{3-\alpha(2-\delta)}\|u_0\|_{\dot H^\delta}^2.
\end{align*}
Using inverse inequality and the Gronwall's in integral form, we obtain
\begin{align*}
& \int_0^t (\II^{1-\alpha}\widehat \theta(\tau), \widehat \theta(\tau))_h\,d\tau + \tribar\dwidehat \theta(t)\tribar^2 \\
& \qquad \leq  C_{\alpha,\mu}\left( \int_0^t (\II^{1-\alpha}\widehat \rho(\tau), \widehat \rho(\tau))_h\,d\tau + t^{2}\int_0^t\|\theta(\tau)\|_{L^2}^2\,d\tau + h^4\,t^{3-\alpha(2-\delta)}\|u_0\|_{\dot H^\delta}^2\right).
\end{align*}
\end{proof}

\begin{lemma}\label{lemma:auxiliary_identity2}
We assume that the correction factors in \eqref{semi_fem_u_2D_afc_linear} are defined through Definition \ref{definition:correction_factors_AFC}. Then, 
\begin{align*}
& \int_0^t(\II^{1-\alpha}\theta_1(\tau), \theta_1(\tau))_h\,d\tau + \tribar \widehat \theta_1(t)\tribar^2\\
& \quad \leq C_{\alpha,\mu}\left( \int_0^t \left(  (\II^{1-\alpha}\rho_1(\tau), \rho_1(\tau))_h + (\II^{1-\alpha}\widehat \rho(\tau), \widehat \rho(\tau))\right)_h\,d\tau + t^2\int_0^t \|\theta(\tau)\|_{L^2}^2 \,d\tau + h^4\tau^{3-\alpha(2-\delta)}\|u_0\|_{\dot H^\delta}^2 \right).
\end{align*}
\end{lemma}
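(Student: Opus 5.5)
We follow the weighted energy approach of \cite{mustapha2018,mahata2022}, this time with weight $t$ on the error equation \eqref{error_eq_1}. The first step is to multiply \eqref{error_eq_1} by $t$ and apply identity \eqref{aux_eq_2} with $\varphi=\theta$. Since $\theta(0)=0$, this gives
\begin{align*}
t\II^{1-\alpha}\theta'(t)=\II^{1-\alpha}\theta_1'(t)-\alpha\II^{1-\alpha}\theta(t).
\end{align*}
Applying the same identity to $\rho$ produces $\II^{1-\alpha}\rho_1'-\alpha\II^{1-\alpha}\rho$. A boundary term $t\omega_{1-\alpha}(t)\rho(0)$ also appears. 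We avoid pointwise use of it by working throughout with the once-integrated equation below, where it becomes $\omega_{2-\alpha}$-type terms that are controlled, exactly as in the derivation of \eqref{error_eq_42}.

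Next we integrate in time once, using $\II\,\II^{1-\alpha}\theta_1'=\II^{1-\alpha}\theta_1$ (valid because $\theta_1(0)=0$), and obtain for all $\chi\in\Sh$
\begin{align*}
(\II^{1-\alpha}\theta_1,\chi)_h+\calB_h(\widehat{\theta}_1,\chi)
=\alpha(\II^{2-\alpha}\theta,\chi)_h+\calB_h(\widehat\theta,\chi)-(\II^{1-\alpha}\rho_1-\alpha\II^{2-\alpha}\rho,\chi)+\int_0^t\tau\,\widehat d_{\D,h}(u_h;u_h,\chi)\,d\tau .
\end{align*}
The term $\calB_h(\widehat\theta,\chi)$ arises because $\II(t\,\calB_h(\theta,\chi))=\calB_h(\widehat\theta_1,\chi)-\calB_h(\dwidehat\theta,\chi)$, so the right-hand side actually carries $\calB_h(\dwidehat\theta,\chi)$. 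We then choose $\chi=\theta_1$ and integrate over $(0,t)$. Using $\frac{d}{dt}\widehat\theta_1=\theta_1$, the symmetric part of $\calB_h$ gives $\tfrac12\tribar\widehat\theta_1(t)\tribar_h^2$, and the convection part is handled as in Lemma~\ref{lemma:auxiliary_identity1}.

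The right-hand terms are estimated as follows.
\begin{itemize}
\item For the memory terms $(\II^{2-\alpha}\theta,\theta_1)=(\II^{1-\alpha}\widehat\theta,\theta_1)$ and $(\II^{1-\alpha}\rho_1,\theta_1)$, $(\II^{1-\alpha}\widehat\rho,\theta_1)$, we use the continuity \eqref{derivative_cont}. Choosing $\epsilon$ small absorbs $\epsilon\int_0^t(\II^{1-\alpha}\theta_1,\theta_1)_h$ into the left side. What remains is $\int_0^t(\II^{1-\alpha}\widehat\theta,\widehat\theta)_h$, which Lemma~\ref{lemma:auxiliary_identity1} bounds, together with the $\rho$ terms appearing in the statement.
\item The term $\int_0^t\calB_h(\dwidehat\theta,\theta_1)$ is integrated by parts in time. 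This gives $\calB_h(\dwidehat\theta(t),\widehat\theta_1(t))-\int_0^t\calB_h(\widehat\theta,\widehat\theta_1)$. The first piece is handled with Young's inequality and the bound on $\tribar\dwidehat\theta(t)\tribar$ from Lemma~\ref{lemma:auxiliary_identity1}.
\item In $\int_0^t\tribar\widehat\theta\tribar\,\tribar\widehat\theta_1\tribar$, we bound $\tribar\widehat\theta\tribar$ by $\mu^{1/2}Ch^{-1}\|\widehat\theta\|$ via the inverse estimate \eqref{eq:inverse_estimate}, and then by $t^{1/2}(\int\|\theta\|^2)^{1/2}$. The factor $h^{-1}$ is compensated in the same way as in Lemma~\ref{lemma:auxiliary_identity1}. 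Gronwall's lemma then closes the remaining $\int\tribar\widehat\theta_1\tribar^2$.
\end{itemize}

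The main obstacle is the nonlinear stabilization term $K(t):=\int_0^t\int_0^\tau s\,\widehat d_{\D,h}(u_h(s);u_h(s),\theta_1(\tau))\,ds\,d\tau$. We write it through the edge quantities $\varrho_{ij}$ and integrate by parts in $\tau$ using $\theta_1=\frac{d}{d\tau}\widehat\theta_1$, as done for $J(t)$. This yields
\begin{align*}
K(t)=\int_0^t s\,\widehat d_{\D,h}(u_h;u_h,\widehat\theta_1(t))\,ds-\int_0^t\tau\,\widehat d_{\D,h}(u_h(\tau);u_h(\tau),\widehat\theta_1(\tau))\,d\tau.
\end{align*}
Both pieces are bounded with Lemma~\ref{lemma:stability_widehat_D_main}, giving the bound $Ch(\|\nabla\theta\|+\|\nabla\rho\|+h\tau^{-\alpha(2-\delta)/2}\|u_0\|_{\dot H^\delta})\|\nabla\widehat\theta_1\|$. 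The contribution of $\rho$ is controlled by \eqref{ritz_projection_est2_2D} with $s=2$. Applying Cauchy--Schwarz in time then gives
\begin{align*}
|K(t)|\leq C_{\alpha,\mu}h\left(\tribar\widehat\theta_1(t)\tribar+\Big(\int_0^t\tribar\widehat\theta_1\tribar^2\Big)^{1/2}\right)\Big(t^{3/2}\Big(\int_0^t\|\nabla\theta\|^2\Big)^{1/2}+h\,t^{2-\alpha(2-\delta)/2}\|u_0\|_{\dot H^\delta}\Big).
\end{align*}
Young's inequality absorbs $\tribar\widehat\theta_1(t)\tribar^2$ into the left side, and Gronwall handles the integral. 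The inverse estimate $h\|\nabla\theta\|\le C\|\theta\|$ turns $h^2t^2\int\|\nabla\theta\|^2$ into $t^2\int\|\theta\|^2$, and the last factor produces the term $h^4t^{3-\alpha(2-\delta)}\|u_0\|^2_{\dot H^\delta}$, up to the powers of $t$ (which one keeps bounded on $(0,T]$). Collecting all of these bounds yields the stated estimate.
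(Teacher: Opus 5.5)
Your skeleton is the paper's: multiply \eqref{error_eq_1} by $t$, apply \eqref{aux_eq_2}, integrate once, test with $\theta_1$, and integrate again. The memory terms go through \eqref{derivative_cont} and Lemma~\ref{lemma:auxiliary_identity1}. The stabilization term is handled by integration by parts in time (your splitting of $K(t)$ is exactly the paper's $J_1+J_2$), then Lemma~\ref{lemma:stability_widehat_D_main}, the inverse estimate, Young and Gronwall.

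The problem is the extra term $\calB_h(\dwidehat{\theta},\chi)$ that you introduce. You invoke $\frac{d}{dt}\widehat\theta_1=\theta_1$, which means $\widehat\theta_1=\II\theta_1$; this is also the paper's convention. With it, $\int_0^t\calB_h(\theta_1(\tau),\chi)\,d\tau=\calB_h(\widehat\theta_1(t),\chi)$ exactly. The identity $\II\theta_1=t\widehat\theta-\dwidehat{\theta}$ only relates $\widehat\theta_1$ to $t\widehat\theta$, and you have conflated the two. So neither $\calB_h(\widehat\theta,\chi)$ nor $\calB_h(\dwidehat{\theta},\chi)$ belongs on the right-hand side.

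This matters because your treatment of that spurious term would fail. After integrating by parts you must bound $\int_0^t\calB_h(\widehat\theta,\widehat\theta_1)\,d\tau$. The inverse estimate $\tribar\widehat\theta\tribar\le Ch^{-1}\|\widehat\theta\|_{L^2}$ then leaves an $h^{-1}$ that nothing cancels.
\begin{itemize}
\item In Lemma~\ref{lemma:auxiliary_identity1}, and in your own bound for $K(t)$, the inverse estimate is affordable only because Lemma~\ref{lemma:stability_widehat_D_main} puts a factor $h$ in front of the stabilization term. $\calB_h$ carries no such factor.
\item Lemma~\ref{lemma:auxiliary_identity1} controls $\tribar\dwidehat{\theta}\tribar$, not $\tribar\widehat\theta\tribar$, so there is no other bound to fall back on.
\end{itemize}
You would end up with $h^{-2}t^2\int_0^t\|\theta\|_{L^2}^2\,d\tau$ instead of $t^2\int_0^t\|\theta\|_{L^2}^2\,d\tau$. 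Once this is fed into the Gronwall argument of Theorem~\ref{theorem:error_estimates}, the $h^2$ rate is destroyed. Simply delete that term and the corresponding bullet; the rest of your argument then coincides with the paper's proof.

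One further caution: ``the convection part is handled as in Lemma~\ref{lemma:auxiliary_identity1}'' must not mean that it vanishes. The condition $\ddiv\bfb=0$ kills $(\bfb\cdot\nabla w,w)$ but not $(\bfb\cdot\nabla\widehat\theta_1,\theta_1)$. Instead:
\begin{itemize}
\item bound it by $\|\bfb\|_{L^\infty}\mu^{-1/2}\tribar\widehat\theta_1\tribar\,\|\theta_1\|_{L^2}$;
\item apply Young's inequality and $\int_0^t\|\theta_1\|_{L^2}^2\,d\tau\le t^2\int_0^t\|\theta\|_{L^2}^2\,d\tau$;
\item let Gronwall absorb the resulting $\int_0^t\tribar\widehat\theta_1\tribar^2\,d\tau$.
\end{itemize}
This fits the target bound and needs no inverse inequality.
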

\begin{proof}
We multiply the error equation \eqref{error_eq_1} with $t$ in both sides, to get
\begin{align*}
(t\II^{1-\alpha}\theta^\prime(t), \chi)_h + \calB_h(\theta_1(t), \chi)  = - (t\II^{1-\alpha}\rho^\prime(t), \chi)_h + t\widehat d_{\D,h}(u_h(t); u_h(t) ,\chi). 
\end{align*}
We note that, in view of \eqref{aux_eq_2}, the latter error equation can be equivalently written as
\begin{align}\label{error_eq_7}
(\II^{1-\alpha}\theta_1^\prime(t), \chi)_h + \calB_h(\theta_1(t), \chi) = (\kappa_\alpha(t), \chi)_h  + t\widehat d_{\D,h}(u_h(t);u_h(t),\chi), 
\end{align}
where
\begin{align*}
\kappa_\alpha(t) := \alpha\II^{1-\alpha}\theta(t)  + \II^{1-\alpha}\rho_1^\prime(t) - \alpha\II^{1-\alpha}\rho(t).
\end{align*}
Then, integrating in time, to get
\begin{align*}
(\II^{1-\alpha}\theta_1(t), \chi)_h + \calB_h(\widehat \theta_1(t), \chi)  = (\widetilde \kappa_\alpha(t), \chi)_h + \int_0^t \tau\widehat d_{\D,h}(u_h(\tau);u_h(\tau),\chi)\,d\tau, 
\end{align*}
where
\begin{align*}
\widetilde \kappa_\alpha(t) := \alpha\II^{1-\alpha}\widehat \theta(t) + \II^{1-\alpha}\rho_1(t) - \alpha\II^{1-\alpha}\widehat \rho(t).
\end{align*}
To estimate the stabilization terms on the right hand side, we work as follows.  For the first stabilization, we define for $\psi\in\Sh,$ the function $\varrho_{ij}(\psi) := d_{ij}(1 - \mathfrak{a}_{ij}(\psi))(\psi_i - \psi_j),\,i,j=1,\ldots,\N,$ and
\begin{equation}\label{error_eq_stab_term1}
\begin{aligned}
\int_0^t \tau \widehat d_{\D,h}(u_h(\tau); u_h(t),\chi) & = \sum_{i<j}(\chi_i-\chi_j)\int_0^t \tau \varrho_{ij}(u_h(\tau))\,d\tau  = \sum_{i<j}\widetilde G_{ij}(t)(\chi_i-\chi_j),
\end{aligned}
\end{equation}
where
\begin{align*}
\widetilde G_{ij}(t) := \int_0^t \tau \varrho_{ij}(u_h(\tau))\,d\tau,\;\;i,j=1,\ldots,\N.
\end{align*}
Then, the error equation can be written for all $\chi\in\Sh,$ as
\begin{align}\label{error_eq_2}
(\II^{1-\alpha}\theta_1(t), \chi)_h + \calB_h(\widehat \theta_1(t), \chi) = (\widetilde \kappa_\alpha(t), \chi)_h + \sum_{i<j}(\chi_i - \chi_j)\widetilde G_{ij}(t). 
\end{align}
Setting $\chi=\theta_1(t)\in\Sh$ and integrating again over time, we get
\begin{align*}
\int_0^t(\II^{1-\alpha}\theta_1(\tau), \theta_1(\tau))_h\,d\tau  & + \int_0^t\calB_h(\widehat \theta_1(\tau), \theta_1(\tau))\,d\tau\\
& = \int_0^t(\widetilde \kappa_\alpha(\tau), \theta_1(\tau))_h\,d\tau + \sum_{i<j}\int_0^t((\theta_1(\tau))_i - (\theta_1(\tau))_j)\widetilde G_{ij}(\tau)\,d\tau.
\end{align*}
For the first term on the right hand side, we obtain using \eqref{derivative_cont}
\begin{align*}
\int_0^t(\widetilde \kappa_\alpha(\tau), \theta_1(\tau))_h\,d\tau & =  \int_0^t (\alpha\II^{1-\alpha}\widehat \theta(\tau) + \II^{1-\alpha}\rho_1(\tau) - \alpha\II^{1-\alpha}\widehat \rho(\tau),\theta_1(\tau))_h\,d\tau\\
& \leq C\int_0^t \left(  (\alpha\II^{1-\alpha}\widehat \theta(\tau), \widehat \theta(\tau))_h + (\II^{1-\alpha}\rho_1(\tau), \rho_1(\tau))_h + (\II^{1-\alpha}\widehat \rho(\tau), \widehat \rho(\tau))_h\right)\,d\tau \\
& + \frac{1}{2} \int_0^t (\II^{1-\alpha}\theta_1(\tau),\theta_1(\tau))_h\,d\tau.
\end{align*}
Next, we note that $\frac{d}{dt}\widehat \theta_1(t) = \theta_1(t)$ and thus
\begin{align*}
\sum_{i<j}\int_0^t((\theta_1(\tau))_i - (\theta_1(\tau))_j)\widetilde G_{ij}(\tau) & = \sum_{i<j}\int_0^t\frac{d}{d\tau}((\widehat\theta_1(\tau))_i - (\widehat\theta_1(\tau))_j)\widetilde G_{ij}(\tau) \,d\tau\\
&  = \sum_{i<j}((\widehat\theta_1(t))_i - (\widehat\theta_1(t))_j)\widetilde G_{ij}(t) - \sum_{i<j}\int_0^t((\widehat\theta_1(\tau))_i - (\widehat\theta_1(\tau))_j)\frac{d}{d\tau} \widetilde  G_{ij}(\tau) \,d\tau\\
& = J_1(t) + J_2(t).
\end{align*}
For the first term, we have in view of Lemma \ref{lemma:stability_widehat_D_main},
\begin{align*}
J_1(t) & = \sum_{i<j}((\widehat\theta_1(t))_i - (\widehat\theta_1(t))_j)\int_0^t \tau \varrho_{ij}(u_h(\tau))\,d\tau  = \int_0^t \tau \, \widehat d_{\D,h}(u_h(\tau);u_h(\tau), \widehat\theta_1(t))\,d\tau\\
& \leq Ch\|\nabla \widehat\theta_1(t)\|_{L^2}\int_0^t \tau \left( \|\nabla \theta(\tau)\|_{L^2} + \|\nabla \rho(\tau)\|_{L^2} + h\tau^{-\alpha(2-\delta)/2}\|u_0\|_{\dot H^\delta} \right)\,d\tau\\
& \leq C_{\alpha,\mu}h\tribar \widehat\theta_1(t)\tribar\left( t^{1/2}\left(\int_0^t  \| \nabla \theta_1(\tau) \|_{L^2}^2 \,d\tau\right)^{1/2} + ht^{2-\alpha(2-\delta)/2}\|u_0\|_{\dot H^\delta} \right) \\
& \leq \frac{1}{2}\tribar \widehat\theta_1(t)\tribar^2 +  C_{\alpha,\mu}\left( t^3\int_0^t \|\theta(\tau)\|_{L^2}^2\,d\tau + h^4t^{4-\alpha(2-\delta)}\|u_0\|_{\dot H^\delta}^2 \right),
\end{align*}
where in the last estimate, we have used the inverse inequality and the $\theta_1 = t\theta.$
For the next term, we use again Lemma \ref{lemma:stability_widehat_D_main},
\begin{align*}
J_2(t) & =  \int_0^t\sum_{i<j}((\widehat\theta_1(\tau))_i - (\widehat\theta_1(\tau))_j) \tau \varrho_{ij}(u_h(\tau))\,d\tau\\
& = \int_0^t \tau \, \widehat d_{\D,h}(u_h(\tau);u_h(\tau), \widehat\theta_1(\tau))\,d\tau\\
&  \leq Ch\int_0^t \tau \|\nabla \widehat\theta_1(\tau)\|_{L^2}\left( \|\nabla \theta(\tau)\|_{L^2} + \|\nabla \rho(\tau)\|_{L^2} + h\tau^{-\alpha(2-\delta)/2}\|u_0\|_{\dot H^\delta}\right)\,d\tau\\
&  \leq Ch\int_0^t  \|\nabla \widehat\theta_1(\tau)\|_{L^2}\left( \|\nabla \theta_1(\tau)\|_{L^2} + h\tau^{1-\alpha(2-\delta)/2}\|u_0\|_{\dot H^\delta} \right)\,d\tau\\
&  \leq C_{\mu} \left( \int_0^t \tribar \widehat\theta_1(\tau) \tribar^2\,d\tau \right)^{1/2}\left(\int_0^t\left( h^2\|\nabla  \theta_1(\tau)\|_{L^2}^2 + h^4\tau^{4-\alpha(2-\delta)}\|u_0\|_{\dot H^\delta}^2 \right)\,d\tau\right)^{1/2}.
\end{align*}
Using the above estimates, the error equation can be estimated as
\begin{equation}\label{error_eq_5}
\begin{aligned}
& \int_0^t(\II^{1-\alpha}\theta_1(\tau), \theta_1(\tau))_h\,d\tau + \tribar \widehat \theta_1(t)\tribar^2 \\
& \qquad = C\int_0^t \left(  (\II^{1-\alpha}\widehat \theta(\tau), \widehat \theta(\tau))_h + (\II^{1-\alpha}\rho_1(\tau), \rho_1(\tau))_h + (\II^{1-\alpha}\widehat \rho(\tau), \widehat \rho(\tau))_h\right)\,d\tau\\
& \qquad + C\int_0^t\tribar \widehat\theta_1(\tau) \tribar^2\,d\tau + C_{\alpha,\mu}\left( t^2\int_0^t \| \theta(\tau)\|_{L^2}^2 \,d\tau + h^4\tau^{4-\alpha(2-\delta)}\|u_0\|_{\dot H^\delta}^2\right).
\end{aligned}
\end{equation}
Using \eqref{error_eq_4}, Lemma \ref{lemma:auxiliary_identity1} into \eqref{error_eq_5} and then using again the Gronwall's in integral form, we obtain
\begin{align*}
& \int_0^t(\II^{1-\alpha}\theta_1(\tau), \theta_1(\tau))_h\,d\tau + \tribar \widehat \theta_1(t)\tribar^2\\
& \quad \leq C_{\alpha,\mu}\left( \int_0^t \left(  (\II^{1-\alpha}\rho_1(\tau), \rho_1(\tau))_h + (\II^{1-\alpha}\widehat \rho(\tau), \widehat \rho(\tau))\right)_h\,d\tau + t^2 \int_0^t \| \theta(\tau)\|_{L^2}^2 \,d\tau + h^4\tau^{3-\alpha(2-\delta)}\|u_0\|_{\dot H^\delta}^2 \right).
\end{align*}
\end{proof}

\begin{theorem}\label{theorem:error_estimates}
Let $u$ the unique solution to \eqref{weak_u} with initial data $u_0\in\dot H^\delta,\,0\leq \delta \leq 2,$ which satisfies the regularity estimates \eqref{regularity}. Also, let $u_h\in\Sh$ the unique solution to \eqref{semi_fem_u_2D_afc}, where the correction factors in \eqref{semi_fem_u_2D_afc_linear} are defined through Definition \ref{definition:correction_factors_AFC}.  Then, for $0< \alpha < \frac{1}{2-\delta},$ there exists a constant $C,$ independent of $h,$ such that
\begin{align*}
\|u(t) - u_h(t)\|_{L^2} \leq C_{\alpha,\mu}h^2t^{-\alpha(2-\delta)/2}\|u_0\|_{\dot H^\delta},\;\;\;0< t \leq T.
\end{align*}
\end{theorem}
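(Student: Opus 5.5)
We split $u_h - u = \theta - \rho$ as in the paper. The term $\rho$ is handled directly by \eqref{ritz_projection_est2_2D} with $s=2$, which gives $\|\rho(t)\|_{L^2}\le Ch^2t^{-\alpha(2-\delta)/2}\|u_0\|_{\dot H^\delta}$. So the whole task is to show $\|\theta(t)\|_{L^2}\le C_{\alpha,\mu}h^2t^{-\alpha(2-\delta)/2}\|u_0\|_{\dot H^\delta}$. Following the weighted energy technique of \cite{mustapha2018,mahata2022}, we bound $t^2\|\theta(t)\|^2_{L^2}$ (equivalently $\|\theta_2(t)\|_h^2/t^2$).

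\textbf{Weighted error equation.} Multiply \eqref{error_eq_1} by $t^2$ and use \eqref{aux_eq_3}. This gives an equation for $\theta_2=t^2\theta$ of the form
\begin{align*}
(\II^{1-\alpha}\theta_2'(t),\chi)_h+\calB_h(\theta_2(t),\chi)=(\eta_\alpha(t),\chi)_h+t^2\widehat d_{\D,h}(u_h(t);u_h(t),\chi),
\end{align*}
where $\eta_\alpha$ collects $\II^{1-\alpha}\theta_1$, $\II^{1-\alpha}\theta$ and the corresponding $\rho$-terms ($\II^{1-\alpha}\rho_2'$, $\II^{1-\alpha}\rho_1$, $\II^{1-\alpha}\rho$). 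We do not test with $\theta_2'$. Instead we test with $\theta_2$ and use Lemma \ref{lemma:derivative_caputo_inequality}, or more robustly integrate in time with \eqref{coercivity}. The $\theta$ contributions on the right are split with \eqref{derivative_cont}. Their $(\II^{1-\alpha}\cdot,\cdot)$-parts are then controlled by Lemmas \ref{lemma:auxiliary_identity1} and \ref{lemma:auxiliary_identity2}, and the $\rho$ parts by \eqref{rho_integral_estimate}--\eqref{rho_integral_estimate4}, which give powers $h^4t^{\cdot-\alpha(2-\delta)}$.

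\textbf{Stabilization term.} This is where the nonlinearity enters. By Lemma \ref{lemma:stability_widehat_D_main} with $\psi=u_h$,
\begin{align*}
t^2|\widehat d_{\D,h}(u_h;u_h,\theta_2)|\le Ch\big(\|\nabla\theta_2\|_{L^2}+t^2\|\nabla\rho\|_{L^2}+h\,t^{2-\alpha(2-\delta)/2}\|u_0\|_{\dot H^\delta}\big)\|\nabla\theta_2\|_{L^2}.
\end{align*}
The first piece, $Ch\|\nabla\theta_2\|^2$, is absorbed into $\tribar\theta_2\tribar^2$ for $h$ small relative to $\mu$. The remaining pieces are $O(h^2t^{2-\alpha(2-\delta)/2})$ by \eqref{ritz_projection_est2_2D}, and Young's inequality leaves $C_\mu h^4t^{4-\alpha(2-\delta)}\|u_0\|^2_{\dot H^\delta}$.

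\textbf{Closing the argument.} The feedback terms $t^2\int_0^t\|\theta\|_{L^2}^2$ produced by Lemmas \ref{lemma:auxiliary_identity1}--\ref{lemma:auxiliary_identity2} are the main obstacle. A Gronwall argument in integral form works for the quantity $\sup_{s\le t}s^{2}\|\theta(s)\|^2$ only if $\int_0^t s^{-\alpha(2-\delta)}\,ds$ is finite and contributes the right power of $t$. Collecting everything, we expect a bound of the form
\begin{align*}
t^4\|\theta(t)\|_h^2\le C h^4t^{4-\alpha(2-\delta)}\|u_0\|^2_{\dot H^\delta}+C\int_0^t(\cdots)\,ds.
\end{align*}
Dividing by $t^4$ and applying Gronwall should yield the claimed bound. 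The integrability requirement is where the restriction $\alpha<1/(2-\delta)$ appears: it makes $\alpha(2-\delta)<1$, so $s^{-\alpha(2-\delta)}$ is integrable at $0$. I would carefully track the powers of $t$ through the identities \eqref{aux_eq_1}--\eqref{aux_eq_3}, since each stage gains one factor of $t$. If the direct Gronwall step fails, I would instead use an inverse-inequality step as in Lemma \ref{lemma:auxiliary_identity1}, trading $h^2\|\nabla\theta\|^2$ for $\|\theta\|^2$.
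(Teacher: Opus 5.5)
Your plan does not close, and the failure comes from the choice of weight. You note yourself that Lemmas~\ref{lemma:auxiliary_identity1} and~\ref{lemma:auxiliary_identity2} feed back the term $t^2\int_0^t\|\theta(\tau)\|_{L^2}^2\,d\tau$. The quantity your $t^2$-weighted equation controls pointwise is $\|\theta_2(t)\|_h^2\sim t^4\|\theta(t)\|_{L^2}^2$. Divide by $t^4$ and write $y(t)=\int_0^t\|\theta(\tau)\|_{L^2}^2\,d\tau$ and $g(t)=h^4t^{-\alpha(2-\delta)}\|u_0\|_{\dot H^\delta}^2$. You arrive at $y'(t)\le C\big(t^{-2}y(t)+g(t)\big)$. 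If the feedback is first smoothed by $\II^\alpha$, so that $\II^\alpha(\tau^2y)(t)\le Ct^{2+\alpha}y(t)$, the kernel becomes $t^{\alpha-2}$. Neither kernel is integrable at $t=0$, so Gronwall gives no bound at all. This holds whatever the restriction on $\alpha$, since that restriction only concerns $g$.

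The paper weights only once. It uses \eqref{aux_eq_2} to obtain \eqref{error_eq_7} for $\theta_1=t\theta$ and tests with $\theta_1(t)$, so its feedback is $t\,y(t)$. After applying $\partial_t^{-\alpha}=\II^\alpha$ this becomes $t^{1+\alpha}y(t)$. Dividing by $t^2$ gives \eqref{error_eq_17} with the integrable kernel $t^{\alpha-1}$. Only then does $\alpha<1/(2-\delta)$ enter, through $\int_0^t g$.

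Second, you never supply the step that yields a pointwise-in-time bound, and the tools you name cannot do it. Inequality \eqref{derivative_cont} and the two auxiliary lemmas control only time integrals $\int_0^t(\II^{1-\alpha}\cdot,\cdot)_h\,d\tau$. Testing with $\theta_2$ and integrating in time gives, via Lemma~\ref{lemma:derivative_caputo_inequality}, only $\II^{1-\alpha}(\|\theta_2\|_h^2)(t)$, not $\|\theta_2(t)\|_h^2$. Also, \eqref{coercivity} concerns $\int(\II^{\alpha}\varphi,\varphi)$, not $\int(\II^{1-\alpha}\varphi',\varphi)$.

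If you instead use Lemma~\ref{lemma:derivative_caputo_inequality} pointwise and then apply $\II^\alpha$, the right-hand side becomes $\int_0^t\omega_\alpha(t-s)(\eta_\alpha(s),\theta_2(s))_h\,ds$. There \eqref{derivative_cont} is unavailable, so terms such as $(\II^{1-\alpha}\theta(t),\theta_2(t))_h$ must be bounded at a fixed $t$. The paper does this at the $\theta_1$ level by inserting the once-integrated error equation \eqref{error_eq_42} with $\chi=\theta_1(t)$. That turns $(\II^{1-\alpha}\theta(t),\theta_1(t))_h$ into $-\calB_h(\widehat\theta(t),\theta_1(t))$ plus data and a stabilization integral. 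After applying $\II^\alpha$, the paper controls the coupling $\|\theta_1\|_{L^2}\big(\|\II^{1-\alpha}\rho\|_{L^2}+\|\II^{1-\alpha}\rho_1'\|_{L^2}\big)$ by a max-in-time argument and Young's inequality.

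Even in the integrated setting, splitting $(\II^{1-\alpha}\theta,\theta_2)$ directly with \eqref{derivative_cont} produces $\int_0^t(\II^{1-\alpha}\theta,\theta)\,d\tau$, which neither lemma bounds. You would first need \eqref{aux_eq_1} to rewrite $t\,\II^{1-\alpha}\theta$. Your treatment of the stabilization term itself is fine and mirrors the paper's bound for $J_2$.
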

\begin{proof}
We test $\chi = \theta_1(t)\in\Sh$ into \eqref{error_eq_7}, to get
\begin{align}\label{error_eq_8}
(\II^{1-\alpha}\theta_1^\prime(t), \theta_1(t))_h + \tribar \theta_1(t)\tribar^2 & =  (\kappa_\alpha(t), \theta_1(t))  + t\widehat d_{\D,h}(u_h(t);u_h(t),\theta_1(t)) = J_1(t) + J_2(t),
\end{align}
where
\begin{align*}
\kappa_\alpha(t) := \alpha\II^{1-\alpha}\theta(t)  + \II^{1-\alpha}\rho_1^\prime(t) - \alpha\II^{1-\alpha}\rho(t).
\end{align*}
First, we estimate the term $J_1.$ To achieve that, we need an estimate for $(\II^{1-\alpha}\theta(t),\theta_1(t))_h,$ and thus, we set $\chi = \theta_1(t)\in\Sh$ into \eqref{error_eq_42}, to get
\begin{align*}
(\II^{1-\alpha}\theta(t), \theta_1(t))_h  = - \calB_h(\widehat \theta(t), \theta_1(t)) - (\II^{1-\alpha}\rho(t), \theta_1(t))_h + \int_0^t\widehat d_{\D,h}(u_h(\tau);u_h(\tau),\theta_1(t))\,d\tau.
\end{align*}
To estimate the stabilization term, we use Lemma \ref{lemma:stability_widehat_D_main}, the estimates \eqref{ritz_projection_est2_2D} and the inverse inequality \eqref{eq:inverse_estimate}, 
\begin{align*}
\int_0^t\widehat d_{\D,h}(u_h(\tau);u_h(\tau),\theta_1(t))\,d\tau & \leq Ch\|\nabla \theta_1(t)\|_{L^2}\int_0^t \left(\|\nabla\theta(\tau)\|_{L^2} + h\tau^{-\alpha(2-\delta)/2}\|u_0\|_{\dot H^\delta} \right)\,d\tau\\
& \leq C\tribar \theta_1(t)\tribar \left( t^{1/2}\left(\int_0^t \|\theta(\tau)\|_{L^2}^2\,d\tau\right)^{1/2} + h^2t^{1-\alpha(2-\delta)/2} \|u_0\|_{\dot H^\delta}\right).
\end{align*}
Thus, inserting the latter estimate, we get
\begin{align*}
(\II^{1-\alpha}\theta(t), \theta_1(t))_h & \leq C \left( \tribar \widehat \theta(t)\tribar^2 + t\int_0^t \|\theta(\tau)\|_{L^2}^2\,d\tau + h^4t^{2-\alpha(2-\delta)} \|u_0\|_{\dot H^\delta}^2\right)\\
& + \frac{1}{4}\tribar \theta_1(t)\tribar^2  + C\|\theta_1(t)\|_{L^2}\|\II^{1-\alpha}\rho(t)\|_{L^2}.
\end{align*}
Recall the estimate from Lemma \ref{lemma:auxiliary_identity2} and \eqref{rho_integral_estimate4}, then
\begin{align*}
& \tribar \widehat \theta_1(t) \tribar^2  \leq C_{\alpha,\mu}\left( t^{2}\int_0^t\|\theta(\tau)\|_{L^2}^2\,d\tau + h^4\,t^{3- \alpha(2-\delta)}\|u_0\|_{\dot H^\delta}^2 \right).
\end{align*}
Then, in view of the latter two estimates and \eqref{mass_lump_equivalence}, the term $J_1,$ can be estimated as 
\begin{align}\label{error_eq_15}
\begin{aligned}
& J_1(t)  = (\alpha\II^{1-\alpha}\theta(t) + \II^{1-\alpha}\rho_1^\prime(t) - \alpha\II^{1-\alpha}\rho(t),\theta_1(t))_h \leq \frac{1}{4}\tribar \theta_1(t)\tribar^2 \\
& \quad + C_\alpha \left( t\int_0^t \|\theta(\tau)\|_{L^2}^2\,d\tau + h^4t^{2 - \alpha(2-\delta)} \|u_0\|_{\dot H^\delta}^2 + \|\theta_1(t)\|_{L^2}(\|\II^{1-\alpha}\rho(t)\|_{L^2} + \|\II^{1-\alpha}\rho_1^\prime(t)\|_{L^2}) \right).
\end{aligned}
\end{align}
For the second term $J_2(t)$ of \eqref{error_eq_8}, in view of Lemma \ref{lemma:stability_widehat_D_main} and \eqref{ritz_projection_est2_2D},
\begin{align}\label{error_eq_16}
\begin{aligned}
J_2(t) & \leq Cht\left(\|\nabla \theta(t)\|_{L^2} + \|\nabla \rho(t)\|_{L^2} + ht^{-\alpha(2-\delta)/2}\|u_0\|_{\dot H^\delta}\right)\|\nabla \theta_1(t)\|_{L^2}\\
& \leq C_{\mu}h^2\left(\tribar \theta_1(t)\tribar^2 + h^2t^{2-\alpha(2-\delta)}\|u_0\|_{\dot H^\delta}^2\right) + \frac{1}{4}\tribar \theta_1(t)\tribar^2.
\end{aligned}
\end{align}
Therefore, inserting \eqref{error_eq_15} and \eqref{error_eq_16} into \eqref{error_eq_8}, and for sufficiently small $h$ such that $1/2 - C_\mu h^2>0,$ we get
\begin{align*}
(\II^{1-\alpha}\theta_1^\prime(t), \theta_1(t))_h + \tribar \theta_1(t)\tribar^2 & \leq  C_{\alpha,\mu}\left( t\int_0^t \|\theta(\tau)\|_{L^2}^2\,d\tau + h^4t^{2 -\alpha(2-\delta)} \|u_0\|_{\dot H^\delta}^2\right)\\
& + C_\mu\|\theta_1(t)\|_{L^2}\left(\|\II^{1-\alpha}\rho(t)\|_{L^2} + \|\II^{1-\alpha}\rho_1^\prime(t)\|_{L^2}\right).
\end{align*}
In view of Lemma \ref{lemma:derivative_caputo_inequality}, the latter becomes
\begin{align*}
\partial_t^\alpha \|\theta_1(t)\|_h^2 +  \tribar \theta_1(t)\tribar^2 & \leq C_{\alpha,\mu}\left( t\int_0^t \|\theta(\tau)\|_{L^2}^2\,d\tau + h^4t^{2 -\alpha(2-\delta)} \|u_0\|_{\dot H^\delta}^2\right)\\
& + C_\mu\|\theta_1(t)\|_{L^2}\left(\|\II^{1-\alpha}\rho(t)\|_{L^2} + \|\II^{1-\alpha}\rho_1^\prime(t)\|_{L^2}\right).
\end{align*}
We operate both sides with $\partial_t^{-\alpha}$ and using \eqref{caputo_derivative} and \eqref{mass_lump_equivalence}, we get
\begin{align}\label{error_eq_18}
\begin{aligned}
\|\theta_1(t)\|_{L^2}^2 +  \partial_t^{-\alpha}\tribar \theta_1(t)\tribar^2 & \leq C_{\alpha,\mu}\left( \int_0^t (t-\tau)^{\alpha-1}\tau y(\tau)\,d\tau + h^4\|u_0\|_{\dot H^\delta}^2\int_0^t(t-\tau)^{\alpha-1}\tau^{2 -\alpha(2-\delta)}\,d\tau\right)\\
& + C_\alpha\int_0^t(t-\tau)^{\alpha-1}\|\theta_1(\tau)\|_{L^2}\left(\|\II^{1-\alpha}\rho(\tau)\|_{L^2} + \|\II^{1-\alpha}\rho_1^\prime(\tau)\|_{L^2}\right)\,d\tau,
\end{aligned}
\end{align}
with 
\begin{align*}
y(t):=\int_0^t \|\theta(\tau)\|_{L^2}^2\,d\tau.
\end{align*}
The three integrals on the right hand side of \eqref{error_eq_18}, can be estimated as follows
\begin{align*}
\int_0^t(t-\tau)^{\alpha-1}\tau y(\tau)\,d\tau \leq  y(t)\int_0^t(t-\tau)^{\alpha-1}\tau\,d\tau = C_\alpha y(t)t^{1+\alpha},
\end{align*}
since $y^\prime(t)\geq 0,\,t\in [0,T].$ Further,  
\begin{align*}
\int_0^t(t-\tau)^{\alpha-1}\tau^{2 -\alpha(2-\delta)}\,d\tau = C_\alpha t^{2 + \alpha -\alpha(2-\delta)}.
\end{align*}
For the last integral of \eqref{error_eq_18}, we obtain from \eqref{rho_integral_estimate},
\begin{align*}
& \int_0^t(t-\tau)^{\alpha-1}\|\theta_1(\tau)\|_{L^2}\left(\|\II^{1-\alpha}\rho(\tau)\|_{L^2} + \|\II^{1-\alpha}\rho_1^\prime(\tau)\|_{L^2}\right)\,d\tau\\
&\qquad \leq \max_{0\leq \tau\leq t}\|\theta_1(\tau)\|_{L^2}\int_0^t(t-\tau)^{\alpha-1}\left(\|\II^{1-\alpha}\rho(\tau)\|_{L^2} + \|\II^{1-\alpha}\rho_1^\prime(\tau)\|_{L^2}\right)\,d\tau\\
& \qquad \leq C_\alpha h^2\|u_0\|_{\dot H^\delta}\max_{0\leq \tau\leq t}\|\theta_1(\tau)\|_{L^2}\int_0^t(t-\tau)^{\alpha-1}\left(\tau^{1-\alpha-\alpha(2-\delta)/2}\right)\,d\tau\\
& \qquad = C_\alpha h^2 t^{1-\alpha(2-\delta)/2}\|u_0\|_{\dot H^\delta}\max_{0\leq \tau\leq t}\|\theta_1(\tau)\|_{L^2}.
\end{align*}
Due to the continuity of $\|\theta_1(t)\|_{L^2}$ on $[0,t],$ there exists a $t^*\in [0,t]$ such that
\begin{align*}
\max_{0\leq \tau\leq t}\|\theta_1(\tau)\|_{L^2} = \|\theta_1(t^*)\|_{L^2}.
\end{align*}
Then,
\begin{align*}
\|\theta_1(t^*)\|_{L^2}^2 & \leq C_{\alpha,\mu}\left( t^{1+\alpha}\int_0^t \|\theta(\tau)\|_{L^2}^2\,d\tau + h^4t^{2+\alpha-\alpha(2-\delta)}\|u_0\|_{\dot H^\delta}^2 + h^2\|\theta_1(t^*)\|_{L^2} t^{1-\alpha(2-\delta)/2}\|u_0\|_{\dot H^\delta}\right).
\end{align*}
Then, using the Young's inequality, we get for $0\leq t\leq T,$ 
\begin{align*}
\|\theta_1(t^*)\|_{L^2}^2 \leq C_{\alpha,\mu}\left( t^{1+\alpha}\int_0^t \|\theta(\tau)\|_{L^2}^2\,d\tau + h^4t^{2-\alpha(2-\delta)}\|u_0\|_{\dot H^\delta}^2\right).
\end{align*}
Further, since $\theta_1(t)=t\theta(t),$ we obtain for $0< t\leq T,$ 
\begin{align*}
\|\theta(t)\|_{L^2}^2 \leq C_{\alpha,\mu}\left( t^{\alpha-1}\int_0^t \|\theta(\tau)\|_{L^2}^2\,d\tau + h^4t^{-\alpha(2-\delta)}\|u_0\|_{\dot H^\delta}^2\right).
\end{align*}
Further, according to the above notation and letting $g(t):=h^4t^{-\alpha(2-\delta)}\|u_0\|_{\dot H^\delta}^2$, we have 
\begin{align}\label{error_eq_17}
y'(t) \leq C_{\alpha,\mu}\left( t^{\alpha-1}y(t) + g(t)\right).
\end{align}
We note that $\II^1(g(t))$ is finite only when $0< \alpha < \frac{1}{2-\delta},$ and $\II^1(t^{\alpha-1})$ is finite for all $\alpha\in(0,1),$ thus, we can apply the Gronwall's inequality in differential form which yields
\begin{align*}
y(t) \leq C_{\alpha,\mu,T}h^4t^{1-\alpha(2-\delta)}\|u_0\|_{\dot H^\delta}^2.
\end{align*}
Then, using the latter into \eqref{error_eq_17}, yields for $0<\alpha<\frac{1}{2-\delta}$ and $0<t\leq T,$
\begin{align*}
\|\theta(t)\|_{L^2} \leq C_{\alpha,\mu}h^2t^{-\alpha(2-\delta)/2}\|u_0\|_{\dot H^\delta}.
\end{align*}
\end{proof}

\section{A fully--discrete scheme that satisfies the DMP}\label{section:fully}

For the temporal discretization, we will use the L1 method, on uniform meshes. Particularly, let $\NT\ge1$ be a positive integer,
and the time step $k=T/\NT$ so that $0=t^0<t^1<\cdots<t^{\NT}=T$ and $t^n=nk$, $n=0,\dots, \NT$. For $n\geq 1,$ the L1 approximation of the Caputo fractional derivative $\partial_t^\alpha u(\x,t^n),$ can be represented, see, e.g., \cite[Section 3]{lin2007}, \cite[Chapter 5]{jin2023}, and the references therein, as
\begin{align}\label{caputo_derivative_full}
\partial_t^\alpha u(\x,t^n) =  d^0 u(\x,t^n) - d^{n-1}u(\x,t^0) + \sum_{j=1}^{n-1}(d^j - d^{j-1})u(\x,t^{n-j}),
\end{align}
where the weights $d^{j},$ are given by
\begin{align}\label{d_weights}
d^{j} := k^{-\alpha}\frac{(j+1)^{1-\alpha} - j^{1-\alpha}}{\Gamma(2-\alpha)},\;\;\;j=0,\ldots,\NT-1.
\end{align}
We note that $d^{\ell+1} < d^{\ell}$ for $\ell=0,\ldots,\NT-1.$
\begin{remark}
In \cite[Equation (3.3)]{lin2007}, also in \cite[Lemma 4.1]{sun2006}, was proved that the local truncation error of the L1 approximation is bounded by $c_1 k^{2-\alpha},$ for some constant $c_1$ depending only on $u$ and the solution $u$ is sufficient regular, in particular twice continuously differentiable.
\end{remark}
We seek $U^n\in\Sh$ approximation of $u^n:=u(\cdot,t^n)$ for $n=1,\ldots,\NT,$ such that for all $\chi\in\Sh,$ 
\begin{align}\label{L1_fully_discrete}
(\partial_t^\alpha U^n, \chi)_h + \calB_h(U^n,\chi) - \widehat d_{\D,h}(U^n;U^n,\chi) = ( G^n, \chi).
\end{align}
Applying the \eqref{caputo_derivative_full} into \eqref{L1_fully_discrete}, we get
\begin{equation}\label{L1_fully_discrete_expanded}
\begin{aligned}
d^0(U^n, \chi)_h & + \calB_h(U^n,\chi) - \widehat d_{\D,h}(U^n;U^n,\chi) \\
& = d^{n-1}(U^0, \chi)_h - \sum_{\ell=1}^{n-1}(d^{\ell} - d^{\ell-1})(U^{n-\ell},\chi)_h + ( G^n, \chi).
\end{aligned}
\end{equation}
Since the fully--discrete scheme \eqref{L1_fully_discrete} is nonlinear due to the presence of the nonlinear contributions of the stabilization terms, we employ a similar argument to Theorem \ref{theorem:fixed_point_semi_discrete} in order to prove its existence and uniqueness.

\begin{theorem}[Well--posedness of fully--discrete scheme]
We assume that the correction factors in \eqref{semi_fem_u_2D_afc} are defined through \eqref{led_2D_weakened} and Algorithm \ref{algorithm-1}. Then, the fully--discrete scheme \eqref{L1_fully_discrete} has a unique solution.
\end{theorem}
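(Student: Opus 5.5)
We argue by induction on $n$, mirroring the fixed point argument of Theorem \ref{theorem:fixed_point_semi_discrete} but now at a single time level. Assume $U^0=P_hu_0$ and $U^1,\ldots,U^{n-1}\in\Sh$ are already known and uniquely determined. Then the right-hand side of \eqref{L1_fully_discrete_expanded}, namely
\begin{align*}
F^n(\chi):=d^{n-1}(U^0, \chi)_h - \sum_{\ell=1}^{n-1}(d^{\ell} - d^{\ell-1})(U^{n-\ell},\chi)_h + ( G^n, \chi),
\end{align*}
is a known linear functional on $\Sh$. We define the map $\Gh^n:\Sh\to\Sh$, $z\mapsto w=\Gh^n(z)$, where $w$ solves the linear problem
\begin{align*}
d^0(w,\chi)_h + \calB_h(w,\chi) - d_{\D,h}(w,\chi) = F^n(\chi) - \widetilde d_{\D,h}(z;z,\chi),\quad\forall\chi\in\Sh,
\end{align*}
which uses the splitting $\widehat d_{\D,h}=d_{\D,h}-\widetilde d_{\D,h}$ of Remark \ref{remark:stabilization_terms}. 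This map is well defined because the bilinear form on the left is coercive on $\Sh$: $d^0>0$, the convection part vanishes on the diagonal ($\ddiv\bfb=0$, zero boundary values), and $-d_{\D,h}(w,w)\geq0$. Hence the left side is bounded below by $d^0\|w\|_h^2+\tribar w\tribar_h^2$, and Lax--Milgram (finite dimensional invertibility) gives a unique $w$. A fixed point of $\Gh^n$ is exactly a solution $U^n$ of \eqref{L1_fully_discrete}.

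For the contraction, take $z^1,z^2\in\Sh$ and write $w=\Gh^n(z^1)-\Gh^n(z^2)$. Subtracting the two equations, testing with $\chi=w$, and using the non-positivity of $d_{\D,h}$ gives
\begin{align*}
d^0\|w\|_h^2+\tribar w\tribar_h^2\le |\widetilde d_{\D,h}(z^1;z^1,w)-\widetilde d_{\D,h}(z^2;z^2,w)|\le Ch\|\nabla(z^1-z^2)\|_{L^2}\|\nabla w\|_{L^2}
\end{align*}
by Lemma \ref{lemma:stability_widehat_D_difference}. Since $\|\nabla w\|_{L^2}\le\mu^{-1/2}\tribar w\tribar$, Remark \ref{remark:h_norm} and Young's inequality then yield $\tribar w\tribar\le C\mu^{-1}h\,\tribar z^1-z^2\tribar$. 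This is a contraction in the energy norm for $h$ small, and the Banach fixed point theorem gives existence and uniqueness of $U^n$. Induction over $n=1,\ldots,\NT$ completes the argument.

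The main obstacle is that the statement, unlike Theorem \ref{theorem:fixed_point_semi_discrete}, imposes no smallness on $h$. One remedy is to exploit the $d^0=k^{-\alpha}\Gamma(2-\alpha)^{-1}$ term instead. By the inverse estimate \eqref{eq:inverse_estimate}, $\|\nabla(z^1-z^2)\|_{L^2}\le Ch^{-1}\|z^1-z^2\|_{L^2}$, so the right side is at most $C\|z^1-z^2\|_{L^2}\|\nabla w\|_{L^2}$. This gives a contraction in a weighted norm $d^0\|\cdot\|_h^2+\tribar\cdot\tribar_h^2$ provided $C^2/(\mu d^0)<1$, that is, for $k$ small enough. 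If neither route removes every restriction, I would state the result under the condition ``$h$ sufficiently small (or $k$ sufficiently small)'', matching the semi-discrete theorem.

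For existence alone, without any smallness, an alternative is a Brouwer-type argument in finite dimensions. The nonlinear operator is continuous because the factors $\mathfrak{a}_{ij}$ from Algorithm \ref{algorithm-1} are continuous, and it is coercive since $\widehat d_{\D,h}(U;U,U)\le0$. That route gives existence, but uniqueness would still require the Lipschitz bound above.
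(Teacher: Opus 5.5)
Your argument is essentially the paper's proof: induction over the time levels, the same linearized map \eqref{L1_fully_discrete_linearized} built from the splitting of Remark~\ref{remark:stabilization_terms}, testing the difference of two images with itself, and Lemma~\ref{lemma:stability_widehat_D_difference} to get a contraction in $\tribar\cdot\tribar$ for small $h$. You are also right that this needs a smallness condition on $h$ that the statement omits: the paper's proof imposes one too, and your dependence $Ch/\mu<1$ is what the computation actually gives, rather than the paper's $C\mu h<1$.

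In your optional Brouwer remark, the factors $\mathfrak{a}_{ij}$ of Algorithm~\ref{algorithm-1} are in general discontinuous (they jump when $\mathsf{f}_{ij}$ changes sign or $P_i^{\pm}$ vanishes). What is continuous, indeed Lipschitz by Assumption~\ref{assumption:local_estimate_factors}, is the product $d_{ij}\mathfrak{a}_{ij}(U)(U_i-U_j)$, and that is all that route needs.
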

\begin{proof}
We assume that the nonlinear fully--discrete scheme \eqref{L1_fully_discrete_expanded} has a unique solution $U^{\ell}\in\Sh$ for the time steps $t^{\ell},\,\ell=0,\ldots,n-1.$ In the spirit of Theorem \ref{theorem:fixed_point_semi_discrete}, to show the existence of the solution at time step $t=t^{n}$, we introduce the linear operator $\Gh\,:\,\Sh\to \Sh$ that $\Sh\ni v\mapsto \Gh v\in\Sh,$ by
\begin{equation}\label{L1_fully_discrete_linearized}
\begin{aligned}
d^0(\Gh v, \chi)_h & + \calB_h(\Gh v,\chi) - d_{\D,h}(\Gh v,\chi) + \widetilde d_{\D,h}(v;v,\chi) \\
& = d^{n-1}(U^0, \chi)_h - \sum_{\ell=1}^{n-1}(d^{\ell} - d^{\ell-1})(U^{n-\ell},\chi)_h + (G^n, \chi).
\end{aligned}
\end{equation}
Next, we show that the linear operator $\Gh$ is contractive on $\Sh$ for small $h.$ Indeed, let $\Gh v_1,\,\Gh v_2\in\Sh,$ the solution of \eqref{L1_fully_discrete_linearized} with $v_1,\,v_2\in\Sh,$ respectively. We also set $v := v_1 - v_2\in\Sh$ and $\Gh v : = \Gh v_1 - \Gh v_2\in\Sh,$ then in view of \eqref{L1_fully_discrete_linearized}, we get
\begin{align}\label{existence_ineq_fully_discrete}
d^{0}(\Gh v, \chi)_h & + \tribar \Gh v \tribar^2 - d_{\D,h}(\Gh v,\Gh v) = \widetilde d_{\D,h}(v_2; v_2,\chi) - \widetilde d_{\D,h}(v_1; v_1,\chi),\;\;\forall\chi\in \Sh.
\end{align}
As in Theorem \ref{theorem:fixed_point_semi_discrete}, we set $\chi = \Gh v$ into \eqref{existence_ineq_fully_discrete} and use the Lemma \ref{lemma:stability_widehat_D_difference} and similar arguments to conclude to
\begin{align*}
\tribar \Gh v\tribar^2 & \leq C\mu h\tribar v\tribar^2,
\end{align*}
from where choosing $h$ such that $C\mu h<1,$ the operator $\Gh\,:\,\Sh\to \Sh$ is contractive and as a result, there exist a unique solution to \eqref{L1_fully_discrete_expanded} at the time step $t=t^n.$
\end{proof}
Let $\be^n=(\beta^n_1,\dots,\beta^n_{\N})^T$ be the coefficient vector, with respect to the basis of $\Sh$ of $U^n\in\Sh.$ Then, \eqref{L1_fully_discrete_expanded} can be written in the following form,
\begin{equation}\label{L1_fully_discrete_matrix}
\begin{aligned}
& \left(d^0\M_L +  (\mu\S + \Q + \D + \sigma\,\M_L)\right) \be^n\\
& \qquad\qquad = d^{n-1} \M_L\be^{0} + \bfr(G^n) - \sum_{\ell=1}^{n-1}(d^{\ell} - d^{\ell-1})\M_L\be^{n-\ell} +  \overline{\mathsf{f}}(\be^n).
\end{aligned}
\end{equation}
In the following two theorems, we will prove that the stabilized fully--discrete scheme satisfies the local and global DMP, respectively.

\begin{definition}(Local DMP)\label{definition:DMP_fully}
We say that the solution $\be^n \in \mathbb{R}^{\N}$ of \eqref{L1_fully_discrete_matrix} with $\sigma>0$ in $\omega_i$ at $t=t^n,$ satisfies the local DMP if for $i\in\Nh^0,$ we have
\begin{align}
G^n & \leq 0 \;\;\text{in}\;\;\omega_i  \Rightarrow \;\beta_i^n \leq \beta_i^{\max,+},\label{local_dmp1}\\
G^n & \geq 0  \;\;\text{in}\;\;\omega_i  \Rightarrow \;\beta_i^n \geq \beta_i^{\min,-},\label{local_dmp2}
\end{align}
where
\begin{align*}
\beta_i^{\max,+} &:= \max\left\{0, \beta_i^{\max}\right\},\;\;\text{with}\;\;\beta_i^{\max} := \max\left\{\max_{0\leq \ell \leq n-1}\max_{j\in\Zh^i\cup\{i\}}\beta_j^{\ell}, \max_{j\in\Zh^i} \beta_j^n\right\}\\
\beta_i^{\min,-} &:= \min\left\{0, \beta_i^{\min}\right\},\;\;\;\text{with}\;\;\beta_i^{\min} := \min\left\{\min_{0\leq \ell \leq n-1}\min_{j\in\Zh^i\cup\{i\}}\beta_j^{\ell}, \min_{j\in\Zh^i} \beta_j^n\right\}.
\end{align*}
If $\sigma = 0$ in $\omega_i,$ then,
\begin{align}
G^n & \leq 0 \;\;\text{in}\;\;\omega_i  \Rightarrow \;\beta_i^n \leq \beta_i^{\max},\label{local_dmp1_2}\\
G^n & \geq 0  \;\;\text{in}\;\;\omega_i  \Rightarrow \;\beta_i^n \geq \beta_i^{\min}.\label{local_dmp2_2}
\end{align}
\end{definition}

\begin{theorem}\label{theorem:dmp_fully}
Assume the correction factors are computed as in Definition \ref{definition:correction_factors_AFC}. Then, the solution $U^n\in\Sh$ of the fully--discrete scheme \eqref{L1_fully_discrete_expanded} satisfies the local DMP for all $k,\,h,$ in the sense of Definition \ref{definition:DMP_fully}.
\end{theorem}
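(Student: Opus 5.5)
We work directly with the $i$-th row of the nonlinear system \eqref{L1_fully_discrete_matrix}, exactly as in the proof of Theorem \ref{theorem:DMP_semi_discrete}, and aim for a contradiction. We prove only \eqref{local_dmp2}; \eqref{local_dmp1} follows by applying the same argument to $-\be^n$, and the case $\sigma=0$ is the same argument with the reaction term dropped. Fix $i\in\Nh^0$ with $G^n\geq 0$ in $\omega_i$. Then $r_i(G^n)=(G^n,\phi_i)\geq 0$, since $\phi_i\ge 0$ is supported in $\omega_i$.

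\textbf{Step 1 (rewrite row $i$).} Using the zero row sums of $\S,\Q,\D$ and the flux decomposition \eqref{internodal_fluxes_f}, row $i$ of \eqref{L1_fully_discrete_matrix} becomes
\begin{align*}
d^0 m_i\beta_i^n + \sigma m_i\beta_i^n + \sum_{j\neq i}\bigl(\mu s_{ij}+\tau_{ij}+d_{ij}(1-\mathfrak{a}_{ij}(\be^n))\bigr)(\beta_j^n-\beta_i^n) = d^{n-1}m_i\beta_i^0 + \sum_{\ell=1}^{n-1}(d^{\ell-1}-d^{\ell})m_i\beta_i^{n-\ell} + r_i(G^n).
\end{align*}
Since $d^\ell$ is strictly decreasing, every coefficient $d^{\ell-1}-d^\ell$ on the right is positive, as is $d^{n-1}$. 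Telescoping gives
\begin{align*}
d^{n-1}+\sum_{\ell=1}^{n-1}(d^{\ell-1}-d^{\ell}) = d^0.
\end{align*}
So the history terms on the right form a convex combination of past values, multiplied by $d^0 m_i$.

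\textbf{Step 2 (contradiction assumption and sign of the coefficients).} Suppose $\beta_i^n<\beta_i^{\min,-}\le\beta_i^{\min}$. Then $\beta_i^n<\beta_j^n$ for every $j\in\Zh^i$, so $\beta_i^n$ is a strict local minimum over $\omega_i$. In this situation we show, as in \cite[Theorem 10.35]{barrenechea2025} and the semi-discrete proof, that
\begin{align*}
\mu s_{ij}+\tau_{ij}+d_{ij}(1-\mathfrak{a}_{ij}(\be^n))\le 0,\quad j\neq i.
\end{align*}
This uses three facts. First, $s_{ij}\le 0$ by Remark \ref{remark:s_elements}. Second, \eqref{led_2D_weakened} at a local minimum forces $Q_i^-=0$, hence $R_i^-=0$. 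Third, since every $\beta_j^n-\beta_i^n>0$, every flux $\mathsf{f}_{ij}=d_{ij}(\beta_j^n-\beta_i^n)$ is nonpositive. Therefore $\mathfrak{a}_{ij}=0$ whenever $\mathsf{f}_{ij}<0$, which leaves $\tau_{ij}+d_{ij}\le 0$ by \eqref{D_def}. When $\mathsf{f}_{ij}=0$, we have $d_{ij}=0$, and then $\tau_{ij}\le 0$ must be argued from the definition of $d_{ij}$. This verification is the delicate step. I would follow the cited lemma closely, noting that $\mathfrak{a}_{ij}=\min\{\overline{\mathfrak{a}}_{ij},\overline{\mathfrak{a}}_{ji}\}\le\overline{\mathfrak{a}}_{ij}=R_i^-$, so the symmetrization never increases the factor. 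With these signs and $\beta_j^n-\beta_i^n>0$, the whole sum on the left is $\le 0$.

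\textbf{Step 3 (conclude).} Step 2 gives
\begin{align*}
(d^0+\sigma)m_i\beta_i^n \ge d^{n-1}m_i\beta_i^0+\sum_{\ell=1}^{n-1}(d^{\ell-1}-d^\ell)m_i\beta_i^{n-\ell}+r_i(G^n) \ge d^0 m_i\beta_i^{\min},
\end{align*}
where the last inequality uses $\beta_i^{\ell}\ge\beta_i^{\min}$ for $\ell\le n-1$, the convex-combination identity from Step 1, and $r_i(G^n)\geq 0$. Hence $\beta_i^n\ge \frac{d^0}{d^0+\sigma}\beta_i^{\min}$.
\begin{itemize}
\item If $\beta_i^{\min}\le 0$, then $\beta_i^n\ge \frac{d^0}{d^0+\sigma}\beta_i^{\min}\ge\beta_i^{\min}=\beta_i^{\min,-}$.
\item If $\beta_i^{\min}>0$, then $\beta_i^n>0=\beta_i^{\min,-}$.
\end{itemize}
Either case contradicts the assumption of Step 2. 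For $\sigma=0$ the first inequality gives $\beta_i^n\ge\beta_i^{\min}$ directly. No restriction on $k$ or $h$ enters, which yields the theorem for all $k,h$.
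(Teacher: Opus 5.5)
Your proof is correct and follows essentially the paper's argument. You write row $i$ of \eqref{L1_fully_discrete_matrix}, assume the bound fails so that $Z_i$ is a strict local extremum at level $n$, use Algorithm~\ref{algorithm-1} and the nonpositive off-diagonal structure to control the flux and coupling terms, and telescope the L1 weights into $d^0$ times a convex combination of past values. The only differences are that you treat \eqref{local_dmp2} rather than \eqref{local_dmp1} and use the zero-row-sum form in place of the row-sum inequality \eqref{dmp_ineq_1}.

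The step you flag as delicate is in fact immediate. Read \eqref{D_def} with its intended sign $d_{ij}=-\max\{\tau_{ij},0,\tau_{ji}\}$ (as printed, the formula is nonnegative and contradicts the stated $d_{ij}\le 0$). This gives $\tau_{ij}+d_{ij}\le 0$, so $d_{ij}=0$ forces $\tau_{ij}\le 0$.
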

\begin{proof}
We only prove \eqref{local_dmp1}, are the remaining cases can be treated analogously. Let $i\in\Nh^0,$ we assume that $G^n \leq 0$ in $\omega_i.$ Then, $r_i(G^n)\leq 0,$ since the underling basis functions are positive. The $i-$th equation of \eqref{L1_fully_discrete_matrix} can be written as
\begin{align*}
& \left( d^0m_i + (\mu s_{ii} + \tau_{ii} + d_{ii} + \sigma m_i)\right)\beta_i^n  + \sum_{j\in\Zh^i}(\mu s_{ij} + \tau_{ij} + d_{ij})\beta_j^n\\
& \qquad = d^{n-1}m_i\beta_i^0 - \sum_{\ell=1}^{n-1}(d^{\ell}-d^{\ell-1})m_i\beta_j^{n-\ell} + k\sum_{j\in \Zh^i}d_{ij}\mathfrak{a}_{ij}^n (\beta_j^n - \beta_i^n) + r_i(G^n).
\end{align*}
It is sufficient to assume that $\beta_i^n > \beta_i^{\max,+},$ otherwise \eqref{local_dmp1} holds trivially. Then, at node $i$ the solution attains the local maximum, i.e., $\beta_i^n \geq  \beta_j^n,\,\forall\,j\in \Zh^i.$ Then, $\mathfrak{a}_{ij}^n = 0$ by construction, see Algorithm \ref{algorithm-1}. Also, we note that the matrix $\mu\S + \Q + \D$ has non--positive off--diagonal elements, and therefore,
\begin{align*}
\sum_{j\in\Zh^i}(\mu s_{ij} + \tau_{ij} + d_{ij})\beta_j^n & \geq \beta_i^n\sum_{j\in\Zh^i}(\mu s_{ij} + \tau_{ij} + d_{ij}),
\end{align*}
and since
\begin{align}\label{dmp_ineq_1}
(\mu s_{ii} + \tau_{ii} + d_{ii}) + \sum_{j\in \Zh^i\cap \Nh^0}(\mu s_{ij} + \tau_{ij} + d_{ij}) > 0,
\end{align}
we obtain that 
\begin{align}\label{dmp_ineq_2}
\left( d^0 + \sigma \right)m_i \beta_i^n \leq d^{n-1}m_i\beta_i^0 - \sum_{\ell=1}^{n-1}(d^{\ell+1}-d^{\ell})m_i\beta_j^{n-\ell}
\end{align}
To show \eqref{dmp_ineq_1}, we recall that for the matrices it holds
\begin{align*}
\sum_{j\in \Nh}s_{ij} = \sum_{j\in \Nh}\tau_{ij} = \sum_{j\in \Nh}d_{ij} = 0,\;\;\;\forall\,i\in \Nh^0.
\end{align*}
Then, since all they have non--positive off--diagonal elements, we obtain  for all $i\in \Nh^0,$
\begin{align*}
0 = \sum_{j\in \Nh} (\mu s_{ij} + \tau_{ij} + d_{ij}) & = \sum_{j\in \Nh^0} (\mu s_{ij} + \tau_{ij} + d_{ij}) + \sum_{j\in \Nh^\partial} (\mu s_{ij} + \tau_{ij} + d_{ij})\\
& \leq \sum_{j\in \Nh^0} (\mu s_{ij} + \tau_{ij} + d_{ij}),
\end{align*}
where in the last estimate we have used the non--positivity of the off--diagonal elements of the matrix $\mu\S + \Q + \D.$
Returning to \eqref{dmp_ineq_2}, we use our hypothesis, i.e., $\beta_i^n > \beta_i^{\max,+}$ and the negativity of $d^{\ell}-d^{\ell-1},$ to get
\begin{align*}
\left(d^0 + \sigma \right)m_i \beta_i^n & \leq \left(d^{n-1} - \sum_{\ell=1}^{n-1}(d^{\ell}-d^{\ell-1})\right)m_i\beta_i^{\max,+} = d^0m_i\beta_i^{\max,+}.
\end{align*}
Since $\sigma>0,$ we obtain that $\beta_i^n \leq \beta_i^{\max,+}.$
\end{proof}

\begin{definition}(Global DMP)\label{definition:DMP_fully_global}
We say that the solution $\be^n \in \mathbb{R}^{\N}$ of \eqref{L1_fully_discrete_matrix} with $\sigma>0$ in $\Omega$ at $t=t^n,$ satisfies the global DMP if for $i\in\Nh^0,$ we have
\begin{align}
G^n & \leq 0 \;\;\text{in}\;\;\Omega  \Rightarrow \;\beta_i^n \leq \beta_i^{\max,n-1,+},\label{global_dmp1}\\
G^n & \geq 0  \;\;\text{in}\;\;\Omega  \Rightarrow \;\beta_i^n \geq \beta_i^{\min,n-1,-},\label{global_dmp2}
\end{align}
where 
\begin{align*}
\beta_i^{\max,n-1,+} &:= \max\left\{0, \beta_i^{\max,n-1}\right\},\;\;\text{with}\;\;\beta_i^{\max,n-1} := \max_{j\in \Nh}\beta_j^{n-1}\\
\beta_i^{\min,n-1,-} &:= \min\left\{0, \beta_i^{\min,n-1}\right\},\;\,\;\;\text{with}\;\;\beta_i^{\min,n-1} := \min_{j\in \Nh}\beta_j^{n-1}.
\end{align*}
If $\sigma = 0$ in $\Omega,$ then,
\begin{align}
G^n & \leq 0  \;\;\text{in}\;\;\Omega  \Rightarrow \;\beta_i^n \leq \beta_i^{\max,n},\label{global_dmp1_2}\\
G^n & \geq 0  \;\;\text{in}\;\;\Omega  \Rightarrow \;\beta_i^n \geq \beta_i^{\min,n}.\label{global_dmp2_2}
\end{align}
\end{definition}

\begin{theorem}\label{theorem:dmp_fully_global}
Assume the correction factors are computed as in Definition \ref{definition:correction_factors_AFC}. Then, the solution $U^n\in\Sh$ of the fully--discrete scheme \eqref{L1_fully_discrete_expanded} satisfies the global DMP for all $k,\,h,$ in the sense of Definition \ref{definition:DMP_fully}.
\end{theorem}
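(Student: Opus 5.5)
The argument mirrors the proof of Theorem \ref{theorem:dmp_fully}, but now works at a node where the global extremum of the new iterate is attained, rather than a local one. We prove only \eqref{global_dmp1}; the case \eqref{global_dmp2} follows by applying the same argument to $-\be^n$ with $-G^n$, and the case $\sigma=0$ is identical with the $\max\{0,\cdot\}$ dropped. The plan is an induction on $n$. The hypothesis is that $\beta_j^\ell\leq \beta^{\max,+}_\ast$ for all $j\in\Nh$ and $0\le\ell\le n-1$, where $\beta^{\max,+}_\ast$ denotes the largest of $0$ and the previous global maxima, including the nonnegative Dirichlet boundary data. This is the natural bound produced by the L1 history terms; the definition's $\beta^{\max,n-1,+}$ should then be read as this running maximum, which the induction propagates.

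Assume $G^n\le 0$ in $\Omega$, so that $r_i(G^n)\le 0$ for every $i$. Choose $i\in\Nh^0$ with $\beta_i^n=\max_{j\in\Nh}\beta_j^n$, and suppose for contradiction that $\beta_i^n>\beta^{\max,+}_\ast$. If the maximum were attained only at a boundary node, there would be nothing to prove, since the boundary values are bounded by the data. Because node $i$ is a global maximum, it is in particular a local maximum on $\omega_i$. So $Q_i^+=0$ in Algorithm \ref{algorithm-1}, which gives $R_i^+=0$. Every positive flux $\mathsf f_{ij}=d_{ij}(\beta_j^n-\beta_i^n)\ge0$ is therefore multiplied by $0$, and the limited correction term $\sum_j d_{ij}\mathfrak a_{ij}(\beta_j^n-\beta_i^n)$ vanishes; more precisely, each term is $\le 0$ after the symmetric choice $\mathfrak a_{ij}=\min\{\overline{\mathfrak a}_{ij},\overline{\mathfrak a}_{ji}\}$.

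Next, write the $i$-th row of \eqref{L1_fully_discrete_matrix} in the rearranged form \eqref{semi_dmp_eq2}:
\[
(d^0+\sigma)m_i\beta_i^n+\sum_{j\ne i}\bigl(\mu s_{ij}+\tau_{ij}+d_{ij}(1-\mathfrak a_{ij})\bigr)(\beta_j^n-\beta_i^n)=d^{n-1}m_i\beta_i^0-\sum_{\ell=1}^{n-1}(d^\ell-d^{\ell-1})m_i\beta_i^{n-\ell}+r_i(G^n).
\]
The off-diagonal coefficients are nonpositive by Remark \ref{remark:s_elements} and \eqref{D_def}, with $1-\mathfrak a_{ij}\in[0,1]$. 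Since $\beta_j^n-\beta_i^n\le0$, the sum on the left is nonnegative and may be dropped. On the right, the weights $d^{n-1}$ and $-(d^\ell-d^{\ell-1})$ are positive, and they telescope to $d^0$. Using the induction hypothesis on the old levels (all nodes, not just the patch) and $r_i(G^n)\le0$, we obtain $(d^0+\sigma)\beta_i^n\le d^0\beta^{\max,+}_\ast$. If $\beta_i^n>0$, this gives $\beta_i^n\le \beta^{\max,+}_\ast$. If $\beta_i^n\le 0$, the bound holds trivially because $\beta^{\max,+}_\ast\ge 0$. Either way we contradict the assumption, which closes the induction. When $\sigma=0$, the same chain gives $\beta_i^n\le \max_{\ell<n}\max_j\beta_j^\ell$ without the sign split.

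The main obstacle is the discrepancy between the definition, which references only $\be^{n-1}$, and the L1 scheme, which couples all past levels. The inequality naturally yields a bound by the maximum over all levels $\ell\le n-1$, so I would state and prove the result with that running maximum, noting that by induction it equals $\max\{0,\max_j\beta^0_j\}$. A second subtlety is to make sure that the interior node attaining the global maximum gets $R_i^+=0$ and that the symmetric minimum does not reintroduce positive fluxes. I would handle both by citing the local-extremum argument of Theorem \ref{theorem:dmp_fully}, or \cite[Theorem 10.35]{barrenechea2025}.
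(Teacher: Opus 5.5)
Your argument is the one the paper intends; its proof only says to repeat the proof of Theorem~\ref{theorem:dmp_fully}. At a node carrying the global maximum of $\be^n$, the limiter switches off every nonzero flux, the entries $\mu s_{ij}+\tau_{ij}+d_{ij}$ are nonpositive, and the L1 history weights are positive and sum to $d^0$.

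You are also right that this argument only bounds $\beta^n_i$ by the running maximum over all levels $\ell\le n-1$. The literal version in Definition~\ref{definition:DMP_fully_global}, which uses $\be^{n-1}$ alone, is in fact false. Take $\beta^0_j=c>0$ at all interior nodes and $G^1\equiv -d^0c$; then $\be^1=0$. With $G^2=0$, the right-hand side at level $2$ is $d^1\mathbf{M}_L\be^0$ with $d^1>0$. The minimum-node version of the same argument then gives $\beta^2_j>0=\max\{0,\max_{j'}\beta^1_{j'}\}$ at every interior node.

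Two small repairs are needed.
\begin{enumerate}
\item The coefficient $\mu s_{ij}+\tau_{ij}+d_{ij}(1-\mathfrak a_{ij})$ is not nonpositive in general: take $\tau_{ij}>\mu|s_{ij}|$ and $\mathfrak a_{ij}=1$. Keep $\mu s_{ij}+\tau_{ij}+d_{ij}\le 0$ on the left, as the paper does, and use that the limited correction $\sum_{j}d_{ij}\mathfrak a_{ij}(\beta^n_j-\beta^n_i)$ is exactly zero at the maximum node.
\item No induction is needed for the one-step bound. Collapsing the running maximum to $\max\{0,\max_j\beta^0_j\}$, however, requires $G^\ell\le 0$ for every $\ell\le n$, not only $G^n$.
\end{enumerate}
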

\begin{proof}
The proof uses the same arguments to the proof of Theorem \ref{theorem:dmp_fully}, thus we omit it.
\end{proof}

\section{Numerical experiments}\label{section:numerical_results}
In this section we present several numerical experiments, that will validate our theoretical results. In all the numerical examples we have the following setting of the discretization parameters as well as the correction factors.

As the initial triangulation, we consider a uniform subdivision of $\Omega$, where each side of $\Omega$ is partitioned into $M$ intervals of length $h_0=1/M$, with $M\in\mathbb{N}$. The triangulation $\Th$ is then obtained by splitting each small square into two triangles along one of its diagonals and consists of $2M^2$ right-angled triangles with diameter $h=\sqrt{2}h_0$.

\begin{remark}\label{remark:triangulation}
We note that $\Th$ satisfies Assumption \ref{mesh-assumption}, and therefore, in view of Remark \ref{remark:s_elements}, the corresponding stiffness matrix $\S$ has non-positive off-diagonal entries and positive diagonal entries.
\end{remark}

At each time level $t^n,$ we solve the fixed point iterative scheme \eqref{L1_fully_discrete_linearized} using $U^0:=P_hu_0.$ More precisely, let $\widehat \be=(\widehat \beta_1,\dots,\widehat \beta_{\N})^T$ be the coefficient vector, with respect to the basis of $\Sh$ of $\Gh v\in\Sh.$ Then, \eqref{L1_fully_discrete_linearized} can be written in the following form,
\begin{equation}\label{L1_fully_discrete_matrix_linear}
\begin{aligned}
& \left(d^0\M_L +  (\mu\S + \Q + \D + \sigma\,\M_L)\right) \widehat \be\\
& \qquad\qquad = d^{n-1} \M_L\be^{0} + \bfr(G^n) - \sum_{\ell=1}^{n-1}(d^{\ell} - d^{\ell-1})\M_L\be^{n-\ell} +  \overline{\mathsf{f}}(\widetilde \be),
\end{aligned}
\end{equation}
with $\widetilde \be=(\widetilde \beta_1,\dots,\widetilde \beta_{\N})^T$ be the coefficient vector, with respect to the basis of $\Sh$ of $v\in\Sh.$ Also, $\be^0 = \boldsymbol{\xi}^0,$ with $\boldsymbol{\xi}^0\in\mathbb{R}^{\N}$ the coefficient vector of $P_hu_0.$ In all computations of the above linear system, we initialize the Aitken’s procedure. The fixed point iteration is terminated when the relative change between two consecutive iterates is below $\mathsf{TOL} = 10^{-5}$, with a maximum of 50 iterations. For the computation of the correction factors $\mathfrak{a}_{ij},$ we use Algorithm \ref{algorithm-1} with $q_{i}$ as described in Remark \ref{remark:linearity_preservation}, where due to the construction of $\Th$, the $\gamma_i = 1,\; i=1,\ldots,\N.$

To compute the errors, in the following numerical examples, initially, we assume a triangulation $\Th^{(0)}$ with $h_0^{(0)} = 1/M_0,\,M_0=10$ and time step $k^{(0)} = \frac{1}{10}h_0^{(0)}.$ Then, we perform uniform refinement, i.e., on each triangle we connect the midpoints to obtain four similar triangles. Thus, we compute the errors in the triangulations $\Th^{(\ell)},\,\ell=1,\ldots,L,$ with corresponding time step $k^{(\ell)} = \frac{1}{10}h_0^{(\ell)},\,\ell=1,\ldots,L,$ with some $L\geq 2$ positive integer.

\subsection{A numerical example with nonsmooth initial data}

In the first numerical example, we consider the homogeneous time fractional convection--diffusion--reaction equation on  $\Omega = (0,1)^2,$ where $u_0\in \dot H^{1/2-\epsilon},\,\epsilon>0.$ In particular, we recall the initial function $u_0$ as in \cite[Section 4]{karaa2020},
\begin{equation}\label{non_linear_system_f_sol_boundary_layer4}
\begin{aligned}
u_0 = u(x,y,0)  = \mathbf{1}_{D}(x,y),
\end{aligned}
\end{equation}
where $\mathbf{1}_D(x,y)$ denotes the characteristic operator on a domain $D := \{(x,y)\in \Omega\,:\, x^2 + y^2  \leq 1\}.$
Further, we choose the final time $T=10^{-2},$ and $\mu=10^{-12},\,\sigma=0,\,\bfb = (-20,-20)^T,\,\alpha=0.2,\,G=0.$ Then, we compute the profile of the solution for both standard FEM and AFC scheme, using a triangulation $\Th$ with $h_0 = 1/40$ and the time step is set to be $k = 0.01/100.$ The surface plots are presented on Figure \ref{fig:afc_n40} where we can easily see that the AFC scheme produces a non-oscillatory solution whereas the standard FEM is dominated by spurious oscillations.

\begin{figure}
  \centering
  \includegraphics[trim=180 0 180 0, clip, width=0.5\textwidth]{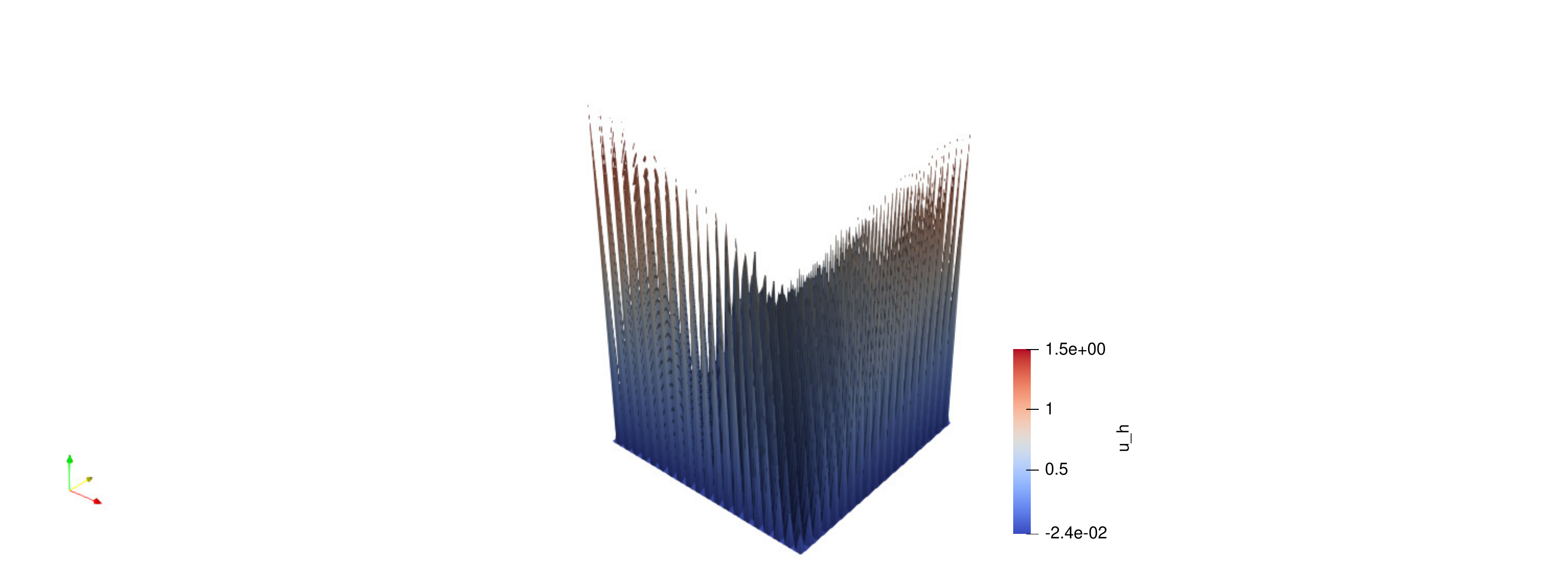}%
  \hfill
  \includegraphics[trim=180 0 180 0, clip, width=0.5\textwidth]{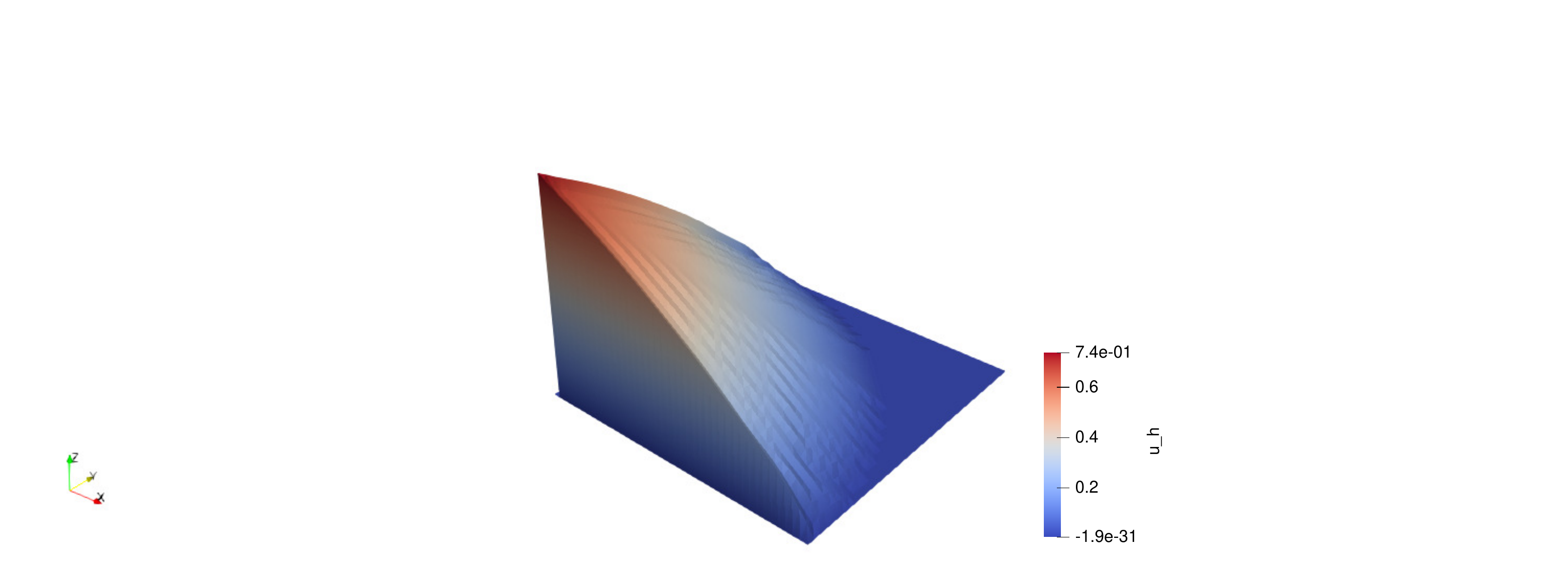}
  \caption{Profile of the solutions of the standard FEM (left) and the AFC scheme (right) for $M=40$ and $\NT=100.$}
  \label{fig:afc_n40}
\end{figure}

\subsection{Convergence study with solution with steep gradients}
We consider the following time fractional convection--diffusion--reaction equation on  $\Omega = (0,1)^2,$ where we choose $T=1,$ and for small values of $\mu,$ we compute the corresponding function $G,$ so as to have the 
\begin{equation}\label{non_linear_system_f_sol_boundary_layer}
\begin{aligned}
u(x,y,t)  = t^2 x (1 - x)  y  (1 - y)\left( \frac{1}{2} + \frac{1}{\pi}  \arctan\left( \frac{2}{\sqrt{\nu}} \left(\frac{1}{16} - \left(x-\frac{1}{2}\right)^2 - \left(y-\frac{1}{2}\right)^2 \right)   \right)\right).
\end{aligned}
\end{equation}

The solution $u$ develops steep gradients along the circle $0.0625 - (x-0.5)^2 - (y - 0.5)^2 =0$ of width $\mathcal{O}(\sqrt{\mu}).$ In Figure \ref{fig:hump_layer_convergence}, we present the experimental order of convergence for \eqref{conv_diff} with the solution given by \eqref{non_linear_system_f_sol_boundary_layer} and $\mu=10^{-3},\,10^{-4},\,10^{-5}$ in both the $L^2$-norm and the energy norm for the standard FEM and the AFC scheme. In particular, for all presenting $\mu,$ the results for $\alpha=0.2$ and $\alpha=0.5$ are very close. Further, in both cases, the AFC scheme achieves the optimal experimental order of convergence in both norms.

\begin{figure}
\centering
\begin{subfigure}[b]{0.32\textwidth}
\begin{tikzpicture}[scale=0.6]
\begin{axis}[
    xlabel={$h_0$},
    ylabel={Error},
    xmode=log,
    ymode=log,
    legend entries={
		{$\|\cdot\|_{L^2},\,\alpha=0.2$}, 
        {$\tribar \cdot \tribar,\,\alpha=0.2$}, 
        {$\mathcal{O}(h^2)$},
        {$\|\cdot\|_{L^2},\,\alpha=0.5$}, 
        {$\tribar \cdot \tribar,\,\alpha=0.5$},            
        {$\mathcal{O}(h)$}               
    },
    legend style={font=\footnotesize, at={(1,1)}, anchor=north west},
    title={$\mu=10^{-3}$},
    tick label style={font=\scriptsize},
    xlabel style={font=\small},
    ylabel style={font=\small},
]
\addplot[black, solid, mark=o, thick] 
table[x=h0, y=L2_mu1e3, skip coords between index={6}{1000}] {data_hump_a2.dat};
\addplot[black, solid, mark=triangle*, thick] 
table[x=h0, y=H1_mu1e3, skip coords between index={6}{1000}] {data_hump_a2.dat};

\addplot[black, dashed, domain=3.1250e-03:1e-1] { 2 * x^2 };

\addplot[black, dashed, mark=*, thick] 
table[x=h0, y=L2_mu1e3, skip coords between index={6}{1000}] {data_hump_a5.dat};
\addplot[black, dashed, mark=triangle, thick] 
table[x=h0, y=H1_mu1e3, skip coords between index={6}{1000}] {data_hump_a5.dat};

\addplot[gray, dotted, thick, domain=3.1250e-03:1e-1] { x };

\end{axis}
\end{tikzpicture}
\end{subfigure}
\hspace{1.5cm}
\begin{subfigure}[b]{0.32\textwidth}
\centering
\begin{tikzpicture}[scale=0.6]
\begin{axis}[
    xlabel={$h_0$},
    ylabel={Error},
    xmode=log,
    ymode=log,
    legend entries={
		{$\|\cdot\|_{L^2},\,\alpha=0.2$}, 
        {$\tribar \cdot \tribar,\,\alpha=0.2$}, 
        {$\mathcal{O}(h^2)$},
        {$\|\cdot\|_{L^2},\,\alpha=0.5$}, 
        {$\tribar \cdot \tribar,\,\alpha=0.5$},            
        {$\mathcal{O}(h)$}      
    },
    legend style={font=\footnotesize, at={(1,1)}, anchor=north west},
    title={$\mu=10^{-4}$},
    tick label style={font=\scriptsize},
    xlabel style={font=\small},
    ylabel style={font=\small},
]
\addplot[black, mark=square*, thick] table[x=h0, y=L2_mu1e4] {data_hump_a2.dat};
\addplot[black, mark=diamond*, thick] table[x=h0, y=H1_mu1e4] {data_hump_a2.dat};

\addplot[black, dashed, domain=1.5625e-03:1e-1] {10 * x^2 };

\addplot[black, mark=square*, thick] table[x=h0, y=L2_mu1e4] {data_hump_a5.dat};
\addplot[black, mark=diamond*, thick] table[x=h0, y=H1_mu1e4] {data_hump_a5.dat};

\addplot[gray, dotted, thick, domain=1.5625e-03:1e-1] {5 * x };

\end{axis}
\end{tikzpicture}
\end{subfigure}
\begin{subfigure}[b]{0.32\textwidth}
\begin{tikzpicture}[scale=0.6]
\begin{axis}[
    xlabel={$h_0$},
    ylabel={Error},
    xmode=log,
    ymode=log,
    legend entries={
		{$\|\cdot\|_{L^2},\,\alpha=0.2$}, 
        {$\tribar \cdot \tribar,\,\alpha=0.2$}, 
        {$\mathcal{O}(h^2)$},
        {$\|\cdot\|_{L^2},\,\alpha=0.5$}, 
        {$\tribar \cdot \tribar,\,\alpha=0.5$},            
        {$\mathcal{O}(h)$}                    
    },
    legend style={font=\footnotesize, at={(1,1)}, anchor=north west},
    title={$\mu=10^{-5}$},
    tick label style={font=\scriptsize},
    xlabel style={font=\small},
    ylabel style={font=\small},
]
\addplot[black, mark=o, thick] table[x=h0, y=L2_mu1e5] {data_hump_a2.dat};
\addplot[black, mark=triangle*, thick] table[x=h0, y=H1_mu1e5] {data_hump_a2.dat};

\addplot[black, dashed, domain=1.5625e-03:1e-1] {50 * x^2 };

\addplot[black, mark=o, thick] table[x=h0, y=L2_mu1e5] {data_hump_a5.dat};
\addplot[black, mark=triangle*, thick] table[x=h0, y=H1_mu1e5] {data_hump_a5.dat};

\addplot[gray, dotted, thick, domain=1.5625e-03:1e-1] { 10 * x };

\end{axis}
\end{tikzpicture}
\end{subfigure}
\caption{Results for \eqref{non_linear_system_f_sol_boundary_layer} using AFC for $\alpha=0.2,\,\alpha=0.5$: (top left) for $\mu = 10^{-3}$, (top right) for $\mu = 10^{-4}$ and (bottom) for $\mu = 10^{-5}.$}
\label{fig:hump_layer_convergence}
\end{figure}

\subsection{Convergence study with solution with circular interior layer}

We consider the following time fractional convection--diffusion--reaction equation on  $\Omega = [0,1]^2,$ where we choose $T=1,$ and for small values of $\mu,$ we compute the corresponding function $G,$ so as to have the 
\begin{equation}\label{non_linear_system_f_sol_boundary_layer2}
\begin{aligned}
u(x,y,t)  =  \frac{t^2}{1 + e^{-\frac{\sqrt{(x-1)^2 + (y-1)^2} - 0.7}{\sqrt{\mu}}}}.
\end{aligned}
\end{equation}
The solution $u$ develops a circular interior layer along the circles $(x-1)^2 + (y-1)^2 = 0.7^2,$ of width $\mathcal{O}(\sqrt{\mu}).$ A similar test problem appear in \cite[Example 3]{wang2021}, in the context of steady--state optimal control problem that governed by a convection--diffusion equation.

\begin{figure}
\centering
\begin{subfigure}[b]{0.32\textwidth}
\begin{tikzpicture}[scale=0.6]
\begin{axis}[
    xlabel={$h_0$},
    ylabel={Error},
    xmode=log,
    ymode=log,
    legend entries={
		{$\|\cdot\|_{L^2},\,\alpha=0.2$}, 
        {$\tribar \cdot \tribar,\,\alpha=0.2$}, 
        {$\mathcal{O}(h^2)$},
        {$\|\cdot\|_{L^2},\,\alpha=0.5$}, 
        {$\tribar \cdot \tribar,\,\alpha=0.5$},            
        {$\mathcal{O}(h)$}
    },
    legend style={font=\footnotesize, at={(1,1)}, anchor=north west},
    title={$\mu=10^{-3}$},
    tick label style={font=\scriptsize},
    xlabel style={font=\small},
    ylabel style={font=\small},
]

\addplot[black, solid, mark=o, thick] 
table[x=h0, y=L2_mu1e3, skip coords between index={6}{1000}] {data_interior_layer_a2.dat};
\addplot[black, solid, mark=triangle*, thick] 
table[x=h0, y=H1_mu1e3, skip coords between index={6}{1000}] {data_interior_layer_a2.dat};

\addplot[black, dashed, domain=3.1250e-03:1e-1] { 5 * x^2 };


\addplot[black, dashed, mark=*, thick] 
table[x=h0, y=L2_mu1e3, skip coords between index={6}{1000}] {data_interior_layer_a5.dat};
\addplot[black, dashed, mark=triangle, thick] 
table[x=h0, y=H1_mu1e3, skip coords between index={6}{1000}] {data_interior_layer_a5.dat};

\addplot[gray, dotted, thick, domain=3.1250e-03:1e-1] { x };

 
\end{axis}
\end{tikzpicture}
\end{subfigure}
\hspace{1.5cm}
\begin{subfigure}[b]{0.32\textwidth}
\centering
\begin{tikzpicture}[scale=0.6]
\begin{axis}[
    xlabel={$h_0$},
    ylabel={Error},
    xmode=log,
    ymode=log,
    legend entries={
		{$\|\cdot\|_{L^2},\,\alpha=0.2$}, 
        {$\tribar \cdot \tribar,\,\alpha=0.2$}, 
        {$\mathcal{O}(h^2)$},
        {$\|\cdot\|_{L^2},\,\alpha=0.5$}, 
        {$\tribar \cdot \tribar,\,\alpha=0.5$},            
        {$\mathcal{O}(h)$}   
    },
    legend style={font=\footnotesize, at={(1,1)}, anchor=north west},
    title={$\mu=10^{-4}$},
    tick label style={font=\scriptsize},
    xlabel style={font=\small},
    ylabel style={font=\small},
]
\addplot[black, mark=square*, thick] table[x=h0, y=L2_mu1e4] {data_interior_layer_a2.dat};
\addplot[black, mark=diamond*, thick] table[x=h0, y=H1_mu1e4] {data_interior_layer_a2.dat};

\addplot[black, dashed, domain=1.5625e-03:1e-1] {20 * x^2 };

\addplot[black, mark=square*, thick] table[x=h0, y=L2_mu1e4] {data_interior_layer_a5.dat};
\addplot[black, mark=diamond*, thick] table[x=h0, y=H1_mu1e4] {data_interior_layer_a5.dat};

\addplot[gray, dotted, thick, domain=1.5625e-03:1e-1] {8 * x };

\end{axis}
\end{tikzpicture}
\end{subfigure}
\begin{subfigure}[b]{0.32\textwidth}
\begin{tikzpicture}[scale=0.6]
\begin{axis}[
    xlabel={$h_0$},
    ylabel={Error},
    xmode=log,
    ymode=log,
    legend entries={
		{$\|\cdot\|_{L^2},\,\alpha=0.2$}, 
        {$\tribar \cdot \tribar,\,\alpha=0.2$}, 
        {$\mathcal{O}(h^2)$},
        {$\|\cdot\|_{L^2},\,\alpha=0.5$}, 
        {$\tribar \cdot \tribar,\,\alpha=0.5$},            
        {$\mathcal{O}(h)$}                 
    },
    legend style={font=\footnotesize, at={(1,1)}, anchor=north west},
    title={$\mu=10^{-5}$},
    tick label style={font=\scriptsize},
    xlabel style={font=\small},
    ylabel style={font=\small},
]
\addplot[black, mark=o, thick] table[x=h0, y=L2_mu1e5] {data_interior_layer_a2.dat};
\addplot[black, mark=triangle*, thick] table[x=h0, y=H1_mu1e5] {data_interior_layer_a2.dat};

\addplot[black, dashed, domain=1.5625e-03:1e-1] {50 * x^2 };

\addplot[black, mark=o, thick] table[x=h0, y=L2_mu1e5] {data_interior_layer_a5.dat};
\addplot[black, mark=triangle*, thick] table[x=h0, y=H1_mu1e5] {data_interior_layer_a5.dat};

\addplot[gray, dotted, thick, domain=1.5625e-03:1e-1] {20 * x };

\end{axis}
\end{tikzpicture}
\end{subfigure}
\caption{Results for \eqref{non_linear_system_f_sol_boundary_layer2} using AFC for $\alpha=0.2,\,\alpha=0.5$: (top left) for $\mu = 10^{-3}$, (top right) for $\mu = 10^{-4}$ and (bottom) for $\mu = 10^{-5}.$}
\label{fig:circular_layer_convergence}
\end{figure}

In Figure \ref{fig:circular_layer_convergence} we compute the errors in $L^2$ and energy norm for $\mu = 10^{-3},\,10^{-4},\,10^{-5}$ where the solution is given by \eqref{non_linear_system_f_sol_boundary_layer2}. Again, the AFC method achieves the optimal experimental order of convergence in both norms for both $\alpha=0.2$ and $\alpha=0.5.$

\subsection{Convergence study with solution with interior layer}

We consider the following time fractional convection--diffusion--reaction equation on  $\Omega = (0,1)^2,$ where we choose $T=1,$ and for small values of $\mu,$ we compute the corresponding function $G,$ so as to have the 
\begin{equation}\label{non_linear_system_f_sol_boundary_layer3}
\begin{aligned}
u(x,y,t)  = t^2\arctan\left(\frac{1}{\sqrt{\mu}}\left(-\frac{1}{2}x + y - \frac{1}{4}\right) \right)
\end{aligned}
\end{equation}
The solution $u$ develops a interior layer along the line $-0.5x + y - 0.25 = 0,$ of width $\mathcal{O}(\sqrt{\mu}).$  In a similar manner to the previous numerical examples, the Figure \ref{fig:interior_layer_convergence} present the experimental order of convergence for \eqref{conv_diff} with the solution given by \eqref{non_linear_system_f_sol_boundary_layer3} and $\mu=10^{-3},\,10^{-4},\,10^{-5}$ in both $L^2$-norm and energy norm for the standard FEM and the AFC scheme. Again, the AFC scheme achieves the optimal experimental order of convergence in both norms. 

\begin{figure}
\centering
\begin{subfigure}[b]{0.32\textwidth}
\begin{tikzpicture}[scale=0.6]
\begin{axis}[
    xlabel={$h_0$},
    ylabel={Error},
    xmode=log,
    ymode=log,
    legend entries={
		{$\|\cdot\|_{L^2},\,\alpha=0.2$}, 
        {$\tribar \cdot \tribar,\,\alpha=0.2$}, 
        {$\mathcal{O}(h^2)$},
        {$\|\cdot\|_{L^2},\,\alpha=0.5$}, 
        {$\tribar \cdot \tribar,\,\alpha=0.5$},            
        {$\mathcal{O}(h)$}                
    },
    legend style={font=\footnotesize, at={(1,1)}, anchor=north west},
    title={$\mu=10^{-3}$},
    tick label style={font=\scriptsize},
    xlabel style={font=\small},
    ylabel style={font=\small},
]
\addplot[black, solid, mark=o, thick] 
table[x=h0, y=L2_mu1e3, skip coords between index={6}{1000}] {data_circular_layer_a2.dat};
\addplot[black, solid, mark=triangle*, thick] 
table[x=h0, y=H1_mu1e3, skip coords between index={6}{1000}] {data_circular_layer_a2.dat};

\addplot[black, dashed, domain=3.1250e-03:1e-1] { 5 * x^2 };

\addplot[black, dashed, mark=*, thick] 
table[x=h0, y=L2_mu1e3, skip coords between index={6}{1000}] {data_circular_layer_a5.dat};
\addplot[black, dashed, mark=triangle, thick] 
table[x=h0, y=H1_mu1e3, skip coords between index={6}{1000}] {data_circular_layer_a5.dat};

\addplot[gray, dotted, thick, domain=3.1250e-03:1e-1] { x };

\end{axis}
\end{tikzpicture}
\end{subfigure}
\hspace{1.5cm}
\begin{subfigure}[b]{0.32\textwidth}
\centering
\begin{tikzpicture}[scale=0.6]
\begin{axis}[
    xlabel={$h_0$},
    ylabel={Error},
    xmode=log,
    ymode=log,
    legend entries={
		{$\|\cdot\|_{L^2},\,\alpha=0.2$}, 
        {$\tribar \cdot \tribar,\,\alpha=0.2$}, 
        {$\mathcal{O}(h^2)$},
        {$\|\cdot\|_{L^2},\,\alpha=0.5$}, 
        {$\tribar \cdot \tribar,\,\alpha=0.5$},            
        {$\mathcal{O}(h)$}       
    },
    legend style={font=\footnotesize, at={(1,1)}, anchor=north west},
    title={$\mu=10^{-4}$},
    tick label style={font=\scriptsize},
    xlabel style={font=\small},
    ylabel style={font=\small},
]
\addplot[black, mark=square*, thick] table[x=h0, y=L2_mu1e4] {data_circular_layer_a2.dat};
\addplot[black, mark=diamond*, thick] table[x=h0, y=H1_mu1e4] {data_circular_layer_a2.dat};

\addplot[black, dashed, domain=1.5625e-03:1e-1] {10 * x^2 };

\addplot[black, mark=square*, thick] table[x=h0, y=L2_mu1e4] {data_circular_layer_a5.dat};
\addplot[black, mark=diamond*, thick] table[x=h0, y=H1_mu1e4] {data_circular_layer_a5.dat};

\addplot[gray, dotted, thick, domain=1.5625e-03:1e-1] { x };

\end{axis}
\end{tikzpicture}
\end{subfigure}
\begin{subfigure}[b]{0.32\textwidth}
\begin{tikzpicture}[scale=0.6]
\begin{axis}[
    xlabel={$h_0$},
    ylabel={Error},
    xmode=log,
    ymode=log,
    legend entries={
		{$\|\cdot\|_{L^2},\,\alpha=0.2$}, 
        {$\tribar \cdot \tribar,\,\alpha=0.2$}, 
        {$\mathcal{O}(h^2)$},
        {$\|\cdot\|_{L^2},\,\alpha=0.5$}, 
        {$\tribar \cdot \tribar,\,\alpha=0.5$},            
        {$\mathcal{O}(h)$}                       
    },
    legend style={font=\footnotesize, at={(1,1)}, anchor=north west},
    title={$\mu=10^{-5}$},
    tick label style={font=\scriptsize},
    xlabel style={font=\small},
    ylabel style={font=\small},
]
\addplot[black, mark=o, thick] table[x=h0, y=L2_mu1e5] {data_circular_layer_a2.dat};
\addplot[black, mark=triangle*, thick] table[x=h0, y=H1_mu1e5] {data_circular_layer_a2.dat};

\addplot[black, dashed, domain=1.5625e-03:1e-1] {50 * x^2 };

\addplot[black, mark=o, thick] table[x=h0, y=L2_mu1e5] {data_circular_layer_a5.dat};
\addplot[black, mark=triangle*, thick] table[x=h0, y=H1_mu1e5] {data_circular_layer_a5.dat};

\addplot[gray, dotted, thick, domain=1.5625e-03:1e-1] {5 * x };

\end{axis}
\end{tikzpicture}
\end{subfigure}
\caption{Results for \eqref{non_linear_system_f_sol_boundary_layer3} using AFC for $\alpha=0.2,\,\alpha=0.5$: (top left) for $\mu = 10^{-3}$, (top right) for $\mu = 10^{-4}$ and (bottom) for $\mu = 10^{-5}.$}
\label{fig:interior_layer_convergence}
\end{figure}

\section{Conclusions}

In this paper, we considered a time fractional convection--diffusion--reaction equation on a bounded domain $\Omega\subset\mathbb{R}^2$ and proposed a stabilized finite element scheme based on the algebraic flux correction method for its numerical approximation. That stabilized scheme proved that satisfies the discrete maximum principle. For nonsmooth initial data, we derived $L^2$-error estimates for the stabilized semidiscrete scheme under a restriction on $\alpha$. We also performed several numerical experiments for problems involving solutions with layers. The obtained results demonstrate that the proposed stabilized scheme preserves the optimal convergence order.
 
\bigskip
\bibliographystyle{plain} 
\bibliography{ref}

\end{document}